\newtheorem{theorem}{Theorem}[section]
\newtheorem{lemma}{Lemma}[section]
\newtheorem{assumption}{Assumption}[section]
\newtheorem{remark}{Remark}[section]
\newtheorem*{thmA}{Theorem A}
\numberwithin{equation}{section}
\theoremstyle{definition}
\newtheorem*{ack}{Acknowledgments}
\newcommand{\cA}{\mathcal{A}}
\newcommand{\cB}{\mathcal{B}}
\newcommand{\cL}{\mathcal{L}}
\newcommand{\cM}{\mathcal{M}}
\newcommand{\cO}{\mathcal{O}}
\newcommand{\cW}{\mathcal{W}}
\renewcommand{\a}{\alpha}
\renewcommand{\b}{\beta}
\newcommand{\bbH}{\mathbb H}
\newcommand{\bbS}{\mathbb S}
\newcommand{\bbB}{\mathbb B}
\newcommand{\bbR}{\mathbb R}
\newcommand{\ra}{\rightarrow}
\renewcommand{\(}{\left(}
\renewcommand{\)}{\right)}
\newcommand{\g}{\varphi}
\newcommand{\mrm}{\mathrm}
\newcommand{\tr}{\mrm{tr}}
\newcommand{\metric}[2]{\ensuremath{\langle #1, #2\rangle}}  
\newcommand{\eq}[1]{\begin{equation}\begin{alignedat}{2} #1 \end{alignedat}\end{equation}}
\title[Horospherical $p$-C-M and prescribed p-shifted Weingarten curvature problems]{The horospherical $p$-Christoffel-Minkowski and prescribed $p$-shifted Weingarten curvature problems\\ in hyperbolic space}
\author[Y. Hu, H. Li, B. Xu]{Yingxiang Hu, Haizhong Li and Botong Xu}
\address{School of Mathematical Sciences, Beihang University, Beijing 100191, P.R. China}
\email{\href{mailto:huyingxiang@buaa.edu.cn}{huyingxiang@buaa.edu.cn}}
\address{Department of Mathematical Sciences, Tsinghua University, Beijing 100084, P.R. China}
\email{\href{mailto:lihz@tsinghua.edu.cn}{lihz@tsinghua.edu.cn}} 
\address{Department of Mathematics, Technion-Israel Institute of Technology, Haifa 32000, Israel}
\email{\href{mailto:botongxu@campus.technion.ac.il}{botongxu@campus.technion.ac.il}}
\keywords{Horospherically convex, Christoffel-Minkowski problem, Weingarten curvature, Constant rank theorem, Hyperbolic space}
	\subjclass[2020]{58J05, 52A55}
\begin{document}
	\setlength{\parindent}{1em}
	\maketitle
\begin{abstract}
    The $L_p$-Christoffel-Minkowski problem and the prescribed $L_p$-Weingarten curvature problem for convex hypersurfaces in Euclidean space are important problems in geometric analysis. In this paper, we consider their counterparts in hyperbolic space. For the horospherical $p$-Christoffel-Minkowski problem first introduced and studied by the second and third authors, we prove the existence of smooth, origin-symmetric, strictly horospherically convex solutions by establishing a new full rank theorem. We also propose the prescribed $p$-shifted Weingarten curvature problem and prove an existence result.
\end{abstract}

\section{Introduction}
The regular Christoffel-Minkowski problem aims to find a smooth, closed and strictly convex hypersurface in the $(n+1)$-dimensional Euclidean space $\bbR^{n+1}$ with the prescribed $k$-th area measure. Equivalently, for any given smooth positive function $f$ on $\bbS^n$, it is to find {\em strictly convex} solutions of the equation  
\eq{
p_{n-k}(D^2 h+h \sigma)=f \quad \text{on $\bbS^n$},
}
with $(D^2 h+h \sigma)>0$ on $\bbS^n$. Here $\sigma$ is the round metric on the $n$-dimensional unit sphere $\bbS^n$, $D$ its Levi-Civita connection, and $p_{n-k}$ is the normalized $(n-k)$th elementary symmetric polynomial of the eigenvalues of its argument. The solution $h$ is known as the support function of a strictly convex hypersurface in Euclidean space, and the eigenvalues of the matrix $(D^2 h+h \sigma)$ are the principal radii of curvature of the hypersurface. When $k=0$, it is the equation for the famous Minkowski problem, which has been completely resolved, see e.g. \cite{Nir53,CY76,Pog78,Lew83,Caf90}; When $k=n-1$, it is the equation for the Christoffel problem, which is also completely settled in \cite{Fir67,Ber69,LWW21}. For the intermediate Christoffel-Minkowski problem ($1\leq k \leq n-2$), under a sufficient condition on the prescribed function $f$, Guan and Ma \cite{GM03} proved the existence of a unique strictly convex solution up to translations. 

By calculating the variation of quermassintegrals of convex bodies along Firey's $p$-sums \cite{Fir62}, Lutwak \cite{Lut93} introduced the $k$-th $p$-area measure of convex bodies for $p\geq 1$. The prescribed $k$-th $p$-area measure problem for $1\leq k\leq n-1$ is called the $L_p$-Christoffel-Minkowski problem, which reduces to the classical one when $p=1$. In the regular case, this problem can be reduced to the following nonlinear PDE:
\eq{\label{Lp-CM-problem}
h^{1-p}p_{n-k}(D^2 h+h \sigma)= f \quad \text{on $\bbS^n$},
}
with $(D^2 h+h \sigma)>0$ on $\bbS^n$. For $p\geq n-k+1$, equation \eqref{Lp-CM-problem} was investigated by Hu, Ma and Shen \cite{HMS04}. For $1<p<n-k+1$, by imposing the even assumption on $f$, this equation was studied by Guan and Xia \cite{GX18}. For $0<p<1$, it was recently discussed by Bianchini, Colesanti, Pagnini and Roncoroni \cite{BCPR23}.

One of the main ingredients in the proof of the existence of convex solutions to the $L_p$-Christoffel-Minkwski problem is the constant rank theorem, which has profound implications for the geometry of solutions to equation \eqref{Lp-CM-problem}. This technique was devised to deal with the convexity property for the homotopy method of deformation by Caffarelli and Friedman \cite{CF85}, see also \cite{SWYY85}. Later, it was extended to the Christoffel-Minkowski problem and the prescribed Weingarten curvature problem for embedded hypersurfaces in Euclidean space \cite{GM03,GLM06,GMZ06}, as well as the more general fully nonlinear elliptic equations by \cite{CGM07,BG09}. The deformation lemma was also adapted to the prescribed curvature measure problems in \cite{GLM09,GLL12}. Recently, Bryan, Ivaki and Scheuer \cite{BIS23} presented a new approach to the constant rank theorem by applying the strong maximum principle to a linear differential inequality (in a viscosity sense) for the subtraces of a linear map, instead of the nonlinear test functions as in \cite{BG09}.
 
It is natural to study the analog of such prescribed measure problems for closed hypersurfaces in hyperbolic space.
A bounded domain (as well as its boundary) in hyperbolic space is called strictly horospherically convex if the principal curvatures are greater than $1$ on its boundary. The geometry of horospherically convex domains has attracted much attention in the decades, see e.g. \cite{ACW21, BM99, GRST08, HLW22, NT97, NSW22, WX14}.
 
Recently, Li and Xu \cite{LX22} introduced a summation of two sets in hyperbolic space called the hyperbolic $p$-sum, and they introduced the horospherical $k$-th $p$-area measure by calculating the variation of the $k$-th modified quermassintegrals (see \cite{ACW21}) of horospherically convex domains along the hyperbolic $p$-sum. They proposed the associated horospherical $p$-Christoffel-Minkowski problem \cite[Prob. 5.2]{LX22}. This problem aims to find strictly horospherically convex solutions to equation
\eq{ \label{s1:horo-p-CM-problem}
\varphi^{-p-k}p_{n-k}(A[\varphi])=f  \quad \text{on $\bbS^n$},
}
where $A[\varphi]$ is a symmetric $2$-tensor on $\bbS^n$ defined by 
\eq{A [\g] := D^2 \g -\frac{1}{2} \frac{|D \g|^2}{\g} \sigma +\frac{1}{2} \( \g -\frac{1}{\g}\) \sigma,
}
and a positive function $\varphi$ on $\bbS^n$ is called strictly horospherically convex if $A[\varphi]>0$ on $\bbS^n$. The strict horospherical convexity of $\varphi$ is equivalent to the fact that $\log \varphi$ is the horospherical support function of a strictly horospherically convex hypersurface. 
When $k=1$ and $p=-n$, equation \eqref{s1:horo-p-CM-problem} is the Christoffel problem in hyperbolic space proposed by \cite{EGM09}.

We first focus on the case $1 \leq k\leq n-1$ of equation \eqref{s1:horo-p-CM-problem}. By employing the volume preserving curvature flows, Li and Xu \cite[
Thm. 7.3]{LX22} proved the existence of solutions to equation \eqref{s1:horo-p-CM-problem} for $p\geq -n$, up to a constant:
\begin{thmA}[\cite{LX22}]
Let $n\geq 2$ and $1\leq k\leq n-1$ be integers, and let $p\geq -n$ be a real number. Let $f(z)$ be a smooth, positive and even function defined on $\bbS^n$. 

\begin{enumerate}[(1)] 
     \item If $p=-n$, we assume that $f$ is a positive constant function on $\bbS^n$.
    \item If $-n<p\leq -\frac{n+k}{2}$, then we assume that 
    $$
    D^2 f^{-\frac{1}{n-k}} - \frac{n-3k-2p}{n-k} |D f^{ -\frac{1}{n-k} }| \sigma + \( \frac{n+p}{n-k} \)^2 f^{ -\frac{1}{n-k} } \sigma \geq 0.
    $$
    \item If $-\frac{n+k}{2}<p<-k$, then we assume that 
    $$
    D^2  f^{-\frac{1}{n-k}} - \frac{(n-3k-2p)^2}{2(n+p)(n+k+2p)} \frac{|D  f^{-\frac{1}{n-k}} |^2}{ f^{-\frac{1}{n-k}} } \sigma + \frac{n+p}{2(n-k)}  f^{-\frac{1}{n-k}}  \sigma \geq 0.
    $$
    \item If $-k\leq p\leq n-2k$, then we assume that 
    $$
   D^2  f^{-\frac{1}{n+p}} - \frac{1}{2} \frac{|D  f^{-\frac{1}{n+p}} |^2}{ f^{-\frac{1}{n+p}} } \sigma + \frac{1}{2}  f^{-\frac{1}{n+p}} \sigma \geq 0. 
    $$
    \item If $p>n-2k$, then we assume that
    $$
    D^2  f^{-\frac{1}{n+p}} - \frac{1}{2} \frac{|D  f^{-\frac{1}{n+p}} |^2}{ f^{-\frac{1}{n+p}} } \sigma + \frac{n-k}{n+p}  f^{-\frac{1}{n+p}} \sigma \geq 0. 
    $$
\end{enumerate}
Then there exists a constant $\gamma>0$ such that the following equation admits a smooth, even and strictly horospherically convex solution,
\eq{ \label{s1:horo-p-CM-problem-gamma}
\varphi^{-p-k}p_{n-k}(A[\varphi(z)])=\gamma f(z).
}
\end{thmA}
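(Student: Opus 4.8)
The plan is to prove Theorem A by establishing long-time existence and convergence of a suitably normalized curvature flow of strictly horospherically convex hypersurfaces, in the spirit of the flow method of Li and Xu \cite{LX22}. First I would parametrize strictly horospherically convex hypersurfaces by their horospherical support function $\varphi = e^u > 0$ on $\bbS^n$, under which strict horospherical convexity becomes the condition $A[\varphi] > 0$, and then run the flow
\[
\partial_t \varphi \;=\; \Bigl(\gamma(t)\, f(z) \;-\; \varphi^{-p-k}\, p_{n-k}(A[\varphi])\Bigr)\, \Phi(\varphi),
\]
with $\Phi > 0$ a speed factor and $\gamma(t)$ a global normalization chosen so that one fixed modified quermassintegral $\widetilde{W}_m$ (in the sense of \cite{ACW21}) is preserved along the flow; any stationary solution then solves \eqref{s1:horo-p-CM-problem-gamma} with $\gamma = \lim_{t\to\infty}\gamma(t)$. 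I would take as initial datum an origin-centered geodesic sphere, so that solutions stay origin-symmetric by uniqueness of the flow.

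The heart of the argument is a set of a priori estimates, uniform in $t$, guaranteeing long-time existence. For the $C^0$ bound I would combine the preserved $\widetilde{W}_m$ with the monotonicity of a companion modified quermassintegral along the flow and the Alexandrov-Fenchel-type inequalities of \cite{ACW21, HLW22} to get $0 < c_1 \le \varphi \le c_2$. The $C^1$ (gradient) bound should follow from the maximum principle applied to $|D\varphi|^2$ together with the $C^0$ bound. The $C^2$ estimate is the crucial step: one must show that strict horospherical convexity $A[\varphi]>0$ is preserved and that the eigenvalues of $A[\varphi]$ stay bounded above. Computing the evolution of the smallest eigenvalue of $A[\varphi]$ and applying the maximum principle, the zeroth- and first-order reaction terms acquire a favorable sign precisely when the differential inequalities (2)--(5) hold for the appropriate power of $f$; the subdivision of the $p$-range records which auxiliary function, $f^{-1/(n-k)}$ or $f^{-1/(n+p)}$, makes those terms nonnegative, while the borderline case $p=-n$ degenerates and forces $f$ to be constant (case (1)). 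With a uniform $C^2$ bound the flow equation is uniformly parabolic and concave in its Hessian argument, so Krylov-Safonov and Schauder estimates yield uniform $C^\infty$ bounds and hence long-time existence.

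Finally, for convergence I would use that the companion quermassintegral is monotone and (by the $C^0$ bound) bounded, so its time-derivative is integrable over $(0,\infty)$; together with the uniform higher-order estimates this forces $\partial_t\varphi\to 0$ along some sequence $t_j\to\infty$. Arzela-Ascoli then extracts a smooth, origin-symmetric, strictly horospherically convex limit $\varphi_\infty$, and passing to the limit in the flow equation (with $\gamma(t_j)\to\gamma>0$, positivity coming from the $C^0$ bound) shows that $\varphi_\infty$ solves \eqref{s1:horo-p-CM-problem-gamma}.

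I expect the main obstacle to be the $C^2$ step, i.e.\ the preservation of strict horospherical convexity along the flow: it requires a delicate evolution computation for $A[\varphi]$ in $\bbH^{n+1}$ and a genuine case analysis in $p$, and it is exactly this step that dictates the precise structural hypotheses on $f$ --- in particular the need to absorb the ``bad'' first-order terms that appear once $p<0$.
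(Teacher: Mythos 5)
Your proposal reconstructs, in outline, exactly the volume-preserving curvature-flow argument of Li--Xu \cite{LX22} to which the paper attributes Theorem A: the paper does not reprove this statement but cites \cite[Thm. 7.3]{LX22}, and it explicitly notes that the hypotheses (2)--(5) on $f$ come from the pinching estimates for that flow (\cite[Prop. 7.1]{LX22}), which is precisely the step you single out as the crux. So your approach is essentially the same as the cited source's; the only thing worth adding is that the present paper's own contribution (Theorem \ref{thm-horo-p-CM-problem}) removes the normalizing constant $\gamma$ by an entirely different elliptic route, namely a full rank theorem combined with degree theory, which the flow method cannot easily achieve due to the lack of homogeneity.
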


Due to the lack of homogeneity of the horospherical $p$-area measures, it is in general difficult to remove the normalizing constant $\gamma$ in \eqref{s1:horo-p-CM-problem-gamma} via the flow approach. Recently, Chen \cite{Che24} removed the constant $\gamma$ for equation \eqref{s1:horo-p-CM-problem} with $k=n-1$ and $p=-n$ via a full rank theorem for the corresponding semilinear elliptic equation. 

In this paper, by establishing the full rank theorem (Theorem \ref{thm-full-rank}) for the fully nonlinear equation \eqref{s1:horo-p-CM-problem}, we improve Theorem A by removing the constant $\gamma$. For simplicity, we introduce the following assumptions on the function $f$ in equation \eqref{s1:horo-p-CM-problem}:
\begin{assumption}\label{Assump-f}
	Let $n \geq 2$ and $1 \leq k\leq n-1$ be integers. Let $p\geq -n$ be a real number and $f(z)$ be a smooth and positive function on $\mathbb{S}^n$.
	\begin{enumerate}
		\item If $p=-n$, then we assume that 
		\begin{align*}
		D^2 f^{-\frac{1}{n-k}} -3 |Df^{-\frac{1}{n-k}}| \sigma + \frac{f^{-\frac{1}{n-k}}}{2 + 8 \( \max_{z \in \bbS^n} f(z) \)^\frac{1}{n-k} }\sigma \geq 0.
		\end{align*}
		\item If $-n < p \leq -\frac{n+k}{2}$, then we assume that 
		\begin{align*}
		D^2 f^{-\frac{1}{n-k}} - \frac{n-3k-2p}{n-k} |D f^{ -\frac{1}{n-k} }| \sigma+ \( \frac{n+p}{n-k} \)^2 f^{ -\frac{1}{n-k} } \sigma \geq 0.
		\end{align*}
		\item If $-\frac{n+k}{2}<p < -k $, then we assume that 
		\begin{align*}
		D^2  f^{-\frac{1}{n-k}} - \frac{(n-3k-2p)^2}{2(n+p)(n+k+2p)} \frac{|D  f^{-\frac{1}{n-k}} |^2}{ f^{-\frac{1}{n-k}} } \sigma + \frac{n+p}{2(n-k)}  f^{-\frac{1}{n-k}}  \sigma \geq 0.
		\end{align*}
		\item If $-k \leq p \leq n-2k$, then we assume that 
		\begin{align*}
		D^2  f^{-\frac{1}{n+p}} - \frac{1}{2} \frac{|D  f^{-\frac{1}{n+p}} |^2}{ f^{-\frac{1}{n+p}} } \sigma + \frac{1}{2}  f^{-\frac{1}{n+p}} \sigma \geq 0. 
		\end{align*}
        \item If $p> n-2k$, then we assume that
        \begin{align*}
		D^2  f^{-\frac{1}{n+p}} - \frac{1}{2} \frac{|D  f^{-\frac{1}{n+p}} |^2}{ f^{-\frac{1}{n+p}} } \sigma + \frac{n-k}{n+p}  f^{-\frac{1}{n+p}} \sigma \geq 0. 
	\end{align*}
	\end{enumerate}
\end{assumption}

\begin{assumption}\label{Assump-barrier-f}
    Let $n \geq 1$ and $0 \leq k\leq n-1$ be integers. Let $p \geq n-2k$ be a real number and $f(z)$ be a smooth, positive function on $\bbS^n$. We assume that $0<f<2^{k-n}$ if $p=n-2k$, and $0< f <  (2k+p-n)^{ \frac{2k+p-n}{2}   } (n-k)^{n-k}/ (n+p)^{ \frac{n+p}{2} }  $ if $p>n-2k$.
\end{assumption}

\begin{theorem}\label{thm-horo-p-CM-problem}
    Let $n \geq 2$ and $1\leq k\leq n-1$ be integers, and let $p \geq -n$ be a real number.
  Assume that $f$ is a positive, even function on $\bbS^n$ that satisfies Assumption \ref{Assump-f}. If $p \geq n-2k$, we assume in addition that $f$ satisfies Assumption \ref{Assump-barrier-f}. Then the horospherical $p$-Christoffel-Minkowski problem \eqref{s1:horo-p-CM-problem} admits a smooth, even and strictly horospherically convex solution.
\end{theorem}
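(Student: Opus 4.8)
The plan is to remove the normalizing constant $\gamma$ of Theorem~A (and, via Theorem~\ref{thm-full-rank} applied with Assumption~\ref{Assump-f}(1), to also reach the non-constant case $p=-n$) by a degree-theoretic continuity argument in the spirit of \cite{GM03,GX18}, in which the full rank theorem, Theorem~\ref{thm-full-rank}, both keeps the equation uniformly elliptic and closes the continuity path. Fix the range of $p$, hence the relevant item of Assumption~\ref{Assump-f}, and set $g=f^{-1/(n-k)}$ if $p\le-k$ and $g=f^{-1/(n+p)}$ if $p\ge-k$. Join $f$ to a constant along $g_t=(1-t)g_0+tg$, $t\in[0,1]$, with $g_0$ a constant satisfying $g_0\ge\min_{\bbS^n}g$, and put $f_t:=g_t^{-(n-k)}$ (resp. $g_t^{-(n+p)}$). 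Since $Dg_0=0$, a short computation shows every $f_t$ is smooth, positive, even, and keeps Assumptions~\ref{Assump-f} and~\ref{Assump-barrier-f} (here one uses $\max f_t\le\max f$, which in case~(1) makes the $\max f$-dependent coefficient only larger). For $t=0$ the problem reduces to the scalar equation $c^{-p-k}\big(\tfrac12(c-c^{-1})\big)^{n-k}=f_0$ for a constant $c>1$, whose left side ranges over $(0,\infty)$, $(0,2^{k-n})$, or $(0,M_{n,k,p})$ as $c$ runs over $(1,\infty)$ according as $p<n-2k$, $p=n-2k$, $p>n-2k$, with $M_{n,k,p}$ the constant of Assumption~\ref{Assump-barrier-f}; hence it is solvable and $\varphi_0\equiv c$ is an explicit even, strictly horospherically convex solution. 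Let $\mathcal I\subset[0,1]$ be the set of $t$ for which the $f_t$-version of~\eqref{s1:horo-p-CM-problem} has a smooth, even, strictly horospherically convex solution; then $0\in\mathcal I$, and it remains to show $\mathcal I$ is open and closed.

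Next come a priori estimates, uniform in $t$ and over all such solutions $\varphi$. A bound $C^{-1}\le\varphi\le C$ follows by evaluating the equation at the extrema of $\varphi$ (where $D\varphi=0$ and $D^2\varphi$ has a sign), using the monotonicity of $p_{n-k}$ on $\Gamma_{n-k}$, evenness of $\varphi$, and Assumption~\ref{Assump-barrier-f} for the upper bound when $p\ge n-2k$; the $C^1$ bound is then free, as $A[\varphi]>0$ together with the $C^0$ bound makes $\varphi$ uniformly semi-convex on $\bbS^n$; and $A[\varphi]\le C\sigma$ follows from the usual second-order maximum-principle argument, differentiating the equation twice and using the concavity of $p_{n-k}^{1/(n-k)}$ on $\Gamma_{n-k}$. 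No lower bound on the eigenvalues of $A[\varphi]$ is available yet, so the equation is a priori only degenerate elliptic.

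Closedness of $\mathcal I$ is where Theorem~\ref{thm-full-rank} enters. For $t_j\in\mathcal I$ with $t_j\to t_*$ and solutions $\varphi_j$, the estimates give a subsequence converging in $C^{1,\beta}$ to some $\varphi_*\in C^{1,1}$, a viscosity solution of the $f_{t_*}$-equation with $A[\varphi_*]\ge0$, $A[\varphi_*]\in\overline{\Gamma_{n-k}}$, and $p_{n-k}(A[\varphi_*])=\varphi_*^{p+k}f_{t_*}\ge C^{-1}>0$. Since $f_{t_*}$ satisfies Assumption~\ref{Assump-f}, Theorem~\ref{thm-full-rank} forces $A[\varphi_*]>0$; the equation is then uniformly elliptic and concave at $\varphi_*$, so Evans--Krylov and Schauder estimates promote $\varphi_*$ to a smooth, even solution and $t_*\in\mathcal I$. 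The same compactness argument run over all of $[0,1]$ gives, in addition, uniform bounds $A[\varphi]\ge\delta_0\sigma$ and $\|\varphi\|_{C^{2,\alpha}}\le C$ along the entire path.

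Finally, openness and the conclusion. By the uniform ellipticity just obtained, $\Phi_t[\varphi]:=\log p_{n-k}(A[\varphi])-(p+k)\log\varphi-\log f_t$ is a uniformly elliptic, concave operator on the bounded open set $\mathcal O$ of even functions cut out by the a priori bounds (so $A[\varphi]>\tfrac12\delta_0\sigma$, $\varphi>\tfrac12 C^{-1}$, $\|\varphi\|_{C^{2,\alpha}}<2C$ on $\mathcal O$), and no solution of any $f_t$-equation lies on $\partial\mathcal O$; hence $\deg(\Phi_t,\mathcal O,0)$ is well defined and $t$-independent. At $t=0$ it equals the sum of local degrees of the solutions of the $f_0$-equation in $\mathcal O$; the constant solution $\varphi_0$ selected by the a priori bounds is the only one, by a uniqueness result for constant right-hand side, and a direct computation at $\varphi_0$ — exploiting evenness, with a generic choice of $g_0$ when $p>n-2k$ — shows its linearization is invertible, so $\deg(\Phi_0,\mathcal O,0)=\pm1\ne0$. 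Thus $\deg(\Phi_1,\mathcal O,0)\ne0$, which produces a smooth, even, strictly horospherically convex solution of~\eqref{s1:horo-p-CM-problem} and proves the theorem. The main obstacle is exactly the interplay between the $C^2$ estimate and Theorem~\ref{thm-full-rank}: without the full rank theorem the equation degenerates on $\partial\Gamma_n$, the compactness in the closedness step fails, and the degree is undefined. A secondary, technical point is to verify that the rigid structural inequalities of Assumptions~\ref{Assump-f} and~\ref{Assump-barrier-f} persist along the interpolation and match the solvability of the $t=0$ equation.
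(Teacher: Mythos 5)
Your overall strategy is the paper's: interpolate in the variable $f^{-1/(n-k)}$ (resp. $f^{-1/(n+p)}$) down to a constant, check that Assumptions \ref{Assump-f} and \ref{Assump-barrier-f} persist along the path, combine the a priori estimates with the full rank theorem to confine all solutions, and compute a nonzero degree at $t=0$ via the uniqueness of constant solutions and the invertibility of the linearization. For $-n\le p\le n-2k$ this is essentially the paper's proof.

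There is, however, a genuine gap in the case $p>n-2k$ (i.e.\ $q=\frac{n+p}{n-k}-1>1$). There the constant equation $\xi_q(c)=f_0^{-1/(n-k)}$ has \emph{two} roots $c_-<\sqrt{\tfrac{q+1}{q-1}}<c_+$ whenever $f_0$ is below the barrier of Assumption \ref{Assump-barrier-f}, and both are smooth, even, strictly h-convex solutions satisfying every a priori bound ($1+\tfrac1C\le\varphi\le C$, $A[\varphi]\ge\delta_0\sigma$, etc.). So the open set $\mathcal O$ you cut out by the a priori bounds contains both. The linearization $\tfrac1n\Delta+\bigl(\tfrac{1-q}{2}+\tfrac{1+q}{2c^2}\bigr)$ has exactly one positive (simple) eigenvalue at $c_-$ and none at $c_+$, so the local degrees are $-1$ and $+1$ and the total degree of $\Phi_0$ on $\mathcal O$ is $0$: homotopy invariance then gives no information. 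A ``generic choice of $g_0$'' only fixes invertibility of the linearization; it does not remove the second root. The paper's fix is structural: for $q>1$ the admissible set is defined with the extra constraint $\varphi>\sqrt{\tfrac{q+1}{q-1}}$, which excludes $c_-$, and a separate pointwise argument (evaluating the equation at a point where $\varphi$ touches $\sqrt{\tfrac{q+1}{q-1}}$ and invoking Assumption \ref{Assump-barrier-f}) shows no solution of any $\mathcal L_t$ ever lies on this new piece of the boundary. Your proposal needs this step.

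A secondary, fixable slip: in your closedness/compactness step you apply Theorem \ref{thm-full-rank} to the $C^{1,1}$ limit $\varphi_*$ and only afterwards claim uniform ellipticity and regularity; the full rank theorem is stated for $C^4$ solutions, and ellipticity does not come from $A[\varphi_*]>0$. The correct order (Lemma \ref{lem-elliptic-(p_k)^{1/k}} in the paper) is that $p_{n-k}(A[\varphi_j])=\varphi_j^{p+k}f_{t_j}\ge c>0$ together with the uniform upper bound on $A[\varphi_j]$ already places the eigenvalues in a compact subset of $\Gamma_{n-k}$, so the equations are uniformly elliptic along the sequence; Evans--Krylov and Schauder then make the limit smooth, and only then does the full rank theorem yield $A[\varphi_*]>0$.
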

Let us make the following remarks on the assumption of $f$ in the above Theorem \ref{thm-horo-p-CM-problem}.
\begin{enumerate}
    \item     When $1 \leq k\leq n-1$ and $p>-n$, the convexity assumption of $f$ in Assumption \ref{Assump-f} is surprisingly the same as that of Theorem A, even though they are derived from two different ways. The former one is obtained from the full rank theorem for the fully nonlinear elliptic equation \eqref{s1:horo-p-CM-problem} that would be established in Theorem \ref{thm-full-rank}, and the latter one was derived from the pinching estimates for a volume preserving curvature flow (see \cite[Prop. 7.1]{LX22}).    When $k=n-1$ and $p=-n$, Theorem \ref{thm-horo-p-CM-problem} reduces to \cite[Theorem 1.1]{Che24}.

    \item  When $p \geq n-2k$, Assumption \ref{Assump-barrier-f} of $f$ is natural. By the work of Li-Xu \cite{LX22} and Hu-Wei-Zhou \cite{HWZ23}, the origin-centered geodesic spheres are the unique solutions to equation \eqref{s1:horo-p-CM-problem} when $p> -n$, and they are also the unique even solutions to \eqref{s1:horo-p-CM-problem} when $p=-n$. This means that, when $p \geq -n$ and $f$ is a positive constant function on $\bbS^n$, the even solution $\varphi$ to \eqref{s1:horo-p-CM-problem} must be a constant. Moreover, the argument in \cite[p. 99]{LX22} has already classified all the solutions when $f$ is constant. In particular, \eqref{s1:horo-p-CM-problem} admits no h-convex solution either $p=n-2k$ and $f \equiv c_1$ with $c_1 \geq 2^{k-n}$, or $p>2k-n$ and $f \equiv c_2$ with  $c_2>(n-k)^{n-k} (2k+p-n)^{ \frac{2k+p-n}{2}   }/ (n+p)^{ \frac{n+p}{2} }$.    
    \item When $1 \leq k\leq n-1$ and $p=-n$, the evenness assumption of $f$ in Theorem \ref{thm-horo-p-CM-problem} is natural.  When $k=n-1$ and $p=-n$, Espinar, G\'alvez and Mira \cite{EGM09} proved a Kazdan-Warner type obstruction for equation \eqref{s1:horo-p-CM-problem} by showing the relationship between this equation with the Nirenberg problem. Later, Li and Xu \cite[Thm. 9.1]{LX22} demonstrated this obstruction for the case that $0\leq k\leq n-1$ and $p=-n$. In particular, equation \eqref{s1:horo-p-CM-problem} admits no solution when $0 \leq k \leq n-1$, $p=-n$, and $f$ is a monotone rotationally symmetric function on $\bbS^n$.
\end{enumerate}

Let us also mention that the progress on the counterpart of $L_p$-Christoffel-Minkowski problem, which is the prescribed $L_p$ Weingarten curvature problem: Given a smooth positive function $f$ on $\bbS^n$, is there a closed, strictly convex hypersurface $\cM$ in $\mathbb R^{n+1}$ such that 
\eq{
p_{n-k}(\cW(\mathbf{n}^{-1}(x)))=h^{1-p}f^{-1}  \quad \text{on $\bbS^n$}\quad ?
}
Here $\mathbf{n}:\cM \ra \bbS^n$ is the Gauss map of $\cM$, and $\cW=(h_i^j)=(g^{jk}h_{ki})$ is the Weingarten matrix at the point $\mathbf{n}^{-1}(x)\in \cM$. This is equivalent to 
\eq{\label{Lp-Weingarten-curv-problem}
\frac{p_n(D^2 h+h\sigma)}{p_k(D^2 h+h\sigma)}=h^{p-1}f  \quad \text{on $\bbS^n$}.
}
When $k=0$, it is the well-known $L_p$ Minkowski problem, which was put forward by Lutwak \cite{Lut93} in his seminal work on the $L_p$ Brunn-Minkowski theory, see e.g., \cite{CW06, GLW22,HLYZ05, JLZ16,LYZ04} and a comprehensive survey by B\"or\"oczky \cite{Bor23}. For $1 \leq k\leq n-1$ and $p=1$, the existence of the solution to \eqref{Lp-Weingarten-curv-problem} was first studied by Guan and Guan \cite{GG02} under some group-invariant assumption on $f$ (e.g., $f$ is even). For $p>n-k+1$ and without the evenness assumption on $f$, the existence of a strictly convex solution to \eqref{Lp-Weingarten-curv-problem} was established through an elliptic method by \cite{GRW15}, as well as a flow approach by \cite{BIS21}. The existence of origin-symmetric strictly convex solution to \eqref{Lp-Weingarten-curv-problem} with $p=n-k+1$ and $1<p<n-k+1$ was recently proved by Lee \cite{Lee24} and Hu-Ivaki \cite{HI24}, respectively.

The following question can be considered as the natural analog of the prescribed $L_p$-Weingarten curvature problem in hyperbolic space:

{\em Given a smooth positive function $f$ on $\mathbb{S}^n$, is there a smooth, strictly horospherically convex hypersurface $\cM$ satisfying equation
\begin{equation}\label{Lp-Weingarten-eq-hyperbolic}
	f(z)~ \varphi^{(n+p)} p_{n-k} ( \cW(G^{-1}(z)) -\sigma ) =1?
\end{equation}
Here $\log \varphi:\bbS^n \ra \bbR$ is the horospherical support function of $\cM$ defined by \eqref{def-u-signed-distance}, and $G:\cM^n \ra \bbS^n$ is the horospherical Gauss map}. Equation \eqref{Lp-Weingarten-eq-hyperbolic} can be equivalently rewritten as 
\eq{
\frac{p_n(A[\varphi])}{p_k(A[\varphi])}=\varphi^{k+p}f(z) \quad \text{on $\bbS^n$}.
}
We call the above problem the prescribed $p$-shifted Weingarten curvature problem in hyperbolic space.
When $k=0$, it is the horospherical $p$-Minkowski problem proposed by Li and Xu \cite[Prob. 5.1]{LX22}, who also proved the existence of origin-symmetric solution for all $p \in (-\infty, +\infty)$ up to a constant \cite[Thm. 7.2]{LX22}. Note that
\eq{ \label{s1:Jac-G}
|\text{Jac} \ G|= \(\det A[\varphi] \)^{-1}= \varphi(z)^n p_n ( \cW(G^{-1}(z)) -\sigma ),
}
where $|\text{Jac} \ G|$ denotes the determinant of the Jacobian of the horospherical Gauss map $G$. Hence, equation \eqref{Lp-Weingarten-eq-hyperbolic} is called the prescribed horospherical surface area measure problem when $k=0$ and $p=0$. For more recent progress on the horospherical $p$-Minkowski problem, readers may refer to \cite{Che23a,LW23,LWX23}. Another special case of equation \eqref{Lp-Weingarten-eq-hyperbolic} is $p=-n$, which is called the prescribed shifted Weingarten curvature equation, was recently studied by Chen \cite{Che23b}. 

In the second part of this paper, we prove the existence result of the prescribed $p$-shifted Weingarten curvature problem \eqref{Lp-Weingarten-eq-hyperbolic} for all $0 \leq k \leq n-1$ and $p \geq -n$. 
\begin{theorem}\label{thm-Lp-Weingarten-eq-hyperbolic}
Let $n \geq 2$ and $1 \leq k \leq n-1$ be integers, and let $p \geq -n$ be a real number. Assume that $f(z)$ is a smooth, positive and even function on $\bbS^n$. If $p \geq n-2k$, we assume in addition that $f$ satisfies Assumption \ref{Assump-barrier-f}. Then the prescribed $p$-shifted Weingarten curvature problem  \eqref{Lp-Weingarten-eq-hyperbolic} admits a smooth, even and strictly horospherically convex solution. 
\end{theorem}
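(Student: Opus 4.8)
The plan is to run a degree-theoretic continuity method combined with the full rank theorem (Theorem \ref{thm-full-rank}), following the same strategy as for Theorem \ref{thm-horo-p-CM-problem} but now applied to the quotient equation $p_n(A[\varphi])/p_k(A[\varphi])=\varphi^{k+p}f(z)$. First I would recast the problem as a fully nonlinear elliptic PDE for the horospherical support function $u=\log\varphi$ on $\bbS^n$, noting that the operator $\varphi\mapsto p_n(A[\varphi])/p_k(A[\varphi])$ is the ratio $p_{n-k}^{-1}$-type Weingarten operator of the shifted Weingarten matrix $\cW-\sigma$, hence elliptic and inverse-concave on the cone of strictly horospherically convex functions $\{A[\varphi]>0\}$. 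I would then set up a one-parameter family $f_t=(1-t)c+tf$ (with $c$ a suitable positive constant chosen so that the constant function solves the equation at $t=0$, which is possible precisely because of the normalization built into Assumption \ref{Assump-barrier-f}), restrict to the subspace of even functions, and aim to show the solution set is open and closed along $[0,1]$.

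The key steps, in order. (1) \emph{A priori $C^0$ bounds:} bound $\varphi$ from above and below using the even symmetry (so the "center of mass" is at the origin) together with the structure of the equation and, crucially in the range $p\geq n-2k$, Assumption \ref{Assump-barrier-f}, which is exactly the barrier condition guaranteeing that geodesic spheres of the appropriate radii trap the solution — the quantities $(2k+p-n)^{(2k+p-n)/2}(n-k)^{n-k}/(n+p)^{(n+p)/2}$ and $2^{k-n}$ appearing there are the values of $p_{n-k}(\cW-\sigma)\varphi^{-(n+p)}$ on geodesic spheres, so $0<f$ below that threshold yields a sphere barrier from outside, while evenness plus a Alexandrov-reflection / maximum-principle argument yields the inner bound. (2) \emph{$C^1$ and $C^2$ bounds:} gradient bounds follow from the $C^0$ bounds by a standard auxiliary-function argument on $\bbS^n$; the $C^2$ bound is the heart of the estimates — one differentiates the equation twice, uses ellipticity and inverse-concavity of the quotient operator, and absorbs the bad terms using the $C^0$--$C^1$ control, much as in \cite{GRW15,BIS21,HI24}. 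Once $A[\varphi]>0$ is known with two-sided bounds on its eigenvalues, Evans--Krylov and Schauder theory upgrade to $C^\infty$ with uniform bounds. (3) \emph{Openness and preservation of strict horospherical convexity:} linearize at a solution; the linearized operator is invertible on even functions (the kernel would correspond to a Jacobi-type field excluded by evenness and the sign of $p$, as in the uniqueness discussion of \cite{LX22,HWZ23}), giving openness, while Theorem \ref{thm-full-rank} guarantees that a limit of strictly h-convex solutions, which a priori is only weakly h-convex ($A[\varphi]\geq 0$), actually has $A[\varphi]>0$ — this is what prevents the solution set from "leaking" to the boundary of the cone. (4) \emph{Degree computation at $t=0$:} at $t=0$ the unique even solution is the geodesic sphere, the linearization there is explicitly diagonalized by spherical harmonics and has nonzero (in fact computable) degree, so the Leray--Schauder degree is nonzero for all $t\in[0,1]$, yielding a solution at $t=1$.

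The main obstacle I expect is step (2), the global $C^2$ estimate for the \emph{quotient} operator $p_n(A[\varphi])/p_k(A[\varphi])$: unlike the single $p_{n-k}$ operator of equation \eqref{s1:horo-p-CM-problem}, the quotient is only inverse-concave (not concave), so the second-derivative estimate must exploit the inverse-concavity structure and the precise algebraic form of $A[\varphi]$ — in particular the zeroth-order term $\frac12(\varphi-\varphi^{-1})\sigma$ and the gradient term $-\tfrac12|D\varphi|^2\varphi^{-1}\sigma$ produce lower-order contributions of borderline sign that have to be controlled using exactly the $C^0$ bounds from step (1), which is where Assumption \ref{Assump-barrier-f} re-enters. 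A secondary difficulty is ensuring that the constant $c$ at $t=0$ can be chosen compatibly with Assumption \ref{Assump-barrier-f} for the whole path $f_t$; this is handled by shrinking, if necessary, so that $\max f$ (hence $\max f_t$) stays under the threshold, which is automatic since $f_t$ is a convex combination of $f$ and a small constant.
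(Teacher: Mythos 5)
Your overall framework (a homotopy $f_t$ from a constant, a priori estimates, evenness, and a degree computation at the constant solution) is the same as the paper's, but there is a genuine error in the mechanism you use to keep the solutions strictly horospherically convex. In step (3) you invoke Theorem \ref{thm-full-rank} to upgrade $A[\varphi]\geq 0$ to $A[\varphi]>0$ along the homotopy. That theorem is not available here: it is proved only for the single-operator equation \eqref{s1:horo-p-CM-problem} (its proof rests on the deformation lemma for $S_k(A[\varphi])=\varphi^{n+p-k}f$, not for the quotient operator), and it requires $f$ to satisfy Assumption \ref{Assump-f}, a convexity-type condition on $f^{-1/(n-k)}$ or $f^{-1/(n+p)}$ which is \emph{not} among the hypotheses of Theorem \ref{thm-Lp-Weingarten-eq-hyperbolic}. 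If strict h-convexity genuinely required the full rank theorem, the theorem as stated (positivity, evenness and the barrier bound only) could not be proved this way. The correct --- and much simpler --- mechanism is that the quotient structure itself forces a lower bound: by the Newton--MacLaurin inequality,
\[
p_n^{1/n}(A[\varphi])\;\geq\;\Big(\frac{p_n(A[\varphi])}{p_k(A[\varphi])}\Big)^{1/(n-k)}=\varphi^{q}f^{1/(n-k)}\;\geq\;\frac{1}{C}>0,
\]
which, combined with the upper bound on $A[\varphi]$ from the $C^2$ estimate, yields $A[\varphi]\geq 1/C$ uniformly (this is Lemma \ref{lem-elliptic-(p_n/p_{n-k})^{1/k}}). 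This is precisely why no analogue of Assumption \ref{Assump-f} appears in the statement of Theorem \ref{thm-Lp-Weingarten-eq-hyperbolic}, and it is the point at which the two theorems genuinely diverge.

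Two smaller corrections. First, your concern in step (2) that the quotient is ``only inverse-concave (not concave)'' is unfounded: $(p_n/p_k)^{1/(n-k)}$ is concave and homogeneous of degree one on $\Gamma_n$, and the $C^2$ estimate treats $p_{n-k}^{1/(n-k)}$ and $(p_n/p_k)^{1/(n-k)}$ uniformly using exactly this concavity, so no separate inverse-concavity argument is needed. Second, in the degree-theoretic setup of Li one does not need invertibility of the linearization at arbitrary solutions, which you assert in step (3) and which would be difficult to establish; it suffices to show $\cL_t^{-1}(0)\cap\partial\cO_R=\emptyset$ via the a priori estimates (with the set $\cO_R$ defined using the lower bound on $p_n/p_k$ rather than on $p_{n-k}$, and, for $q>1$, the threshold $\varphi>\sqrt{(q+1)/(q-1)}$ coming from Assumption \ref{Assump-barrier-f}) together with invertibility of the linearization at the unique even constant solution at $t=0$.
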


\begin{remark}
     When $p=-n$, Theorem \ref{thm-Lp-Weingarten-eq-hyperbolic} reduces to Theorem 1.1 in \cite{Che23b}.
\end{remark}

Just as the $L_p$ Minkowski problem  is the intersection of the $L_p$-Christoffel-Minkowski problem \eqref{Lp-CM-problem} and the prescribed Weingarten curvature problem \eqref{Lp-Weingarten-curv-problem} in Euclidean space, the horospherical $p$-Minkowski problem is  the intersection of the horospherical $p$-Christoffel-Minkowski problem \eqref{s1:horo-p-CM-problem} and the prescribed $p$-shifted Weingarten curvature problem \eqref{Lp-Weingarten-eq-hyperbolic} in hyperbolic space.
\begin{theorem}\label{thm-horo-p-Minkowski}
    Let $n \geq 2$ be an integer. Let $f(z)$ be a smooth, positive and even function on $\bbS^n$.  Assume that one of the following conditions holds:
    \begin{enumerate}
        \item $-n\leq p<n$;
        \item $p=n$, assume that $0<f<2^{-n}$;
        \item $p>n$, assume that $0<f<\frac{(p-n)^\frac{p-n}{2} n^n}{(p+n)^\frac{p+n}{2}}$.
    \end{enumerate}
    Then the horospherical $p$-Minkowski problem (i.e. $k=0$ in \eqref{s1:horo-p-CM-problem})
    \eq{
    \varphi^{-p} p_n (A[\varphi]) =f \quad \text{on $\bbS^n$}
    }
    admits a smooth, even and strictly horospherically convex solution. 
\end{theorem}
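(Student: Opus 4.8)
\textbf{Proof strategy for Theorem~\ref{thm-horo-p-Minkowski}.}
The plan is to treat Theorem~\ref{thm-horo-p-Minkowski} as the endpoint case $k=0$ of Theorems~\ref{thm-horo-p-CM-problem} and \ref{thm-Lp-Weingarten-eq-hyperbolic}. Since $p_0\equiv1$, both \eqref{s1:horo-p-CM-problem} and the equivalent form of \eqref{Lp-Weingarten-eq-hyperbolic} collapse, when $k=0$, to the single Monge--Amp\`ere type equation
\eq{\label{k0-eq}
p_n(A[\varphi])=\varphi^{p}f \qquad\text{on }\bbS^n,
}
to be solved in the cone $\cC^{+}_{\mathrm{ev}}$ of smooth, even, positive functions $\varphi$ on $\bbS^n$ with $A[\varphi]>0$. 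Two features simplify the analysis relative to $1\le k\le n-1$. First, no constant (full) rank theorem is needed: $p_n$ is elliptic exactly on the positive cone, and an a priori $C^2$ bound together with $\det A[\varphi]=\varphi^{p}f\ge c>0$ forces a uniform positive lower bound on the eigenvalues of $A[\varphi]$, so the equation cannot degenerate at the boundary of $\cC^{+}_{\mathrm{ev}}$. Second, Assumption~\ref{Assump-f}, whose sole role in Theorem~\ref{thm-horo-p-CM-problem} is to supply hypotheses for the full rank theorem, is not required here; and since $k=0$ gives $n-2k=n$, $2^{k-n}=2^{-n}$ and $(2k+p-n)^{(2k+p-n)/2}(n-k)^{n-k}/(n+p)^{(n+p)/2}=(p-n)^{(p-n)/2}n^{n}/(p+n)^{(p+n)/2}$, Assumption~\ref{Assump-barrier-f} is exactly hypotheses (2) and (3).

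I would solve \eqref{k0-eq} by the continuity method together with a Leray--Schauder degree argument, following the proof of Theorem~\ref{thm-Lp-Weingarten-eq-hyperbolic} with $k=0$. Fix a constant $c_0>0$ in the admissible range (arbitrary if $-n\le p<n$, $c_0<2^{-n}$ if $p=n$, $c_0<(p-n)^{(p-n)/2}n^{n}/(p+n)^{(p+n)/2}$ if $p>n$) and connect $f$ to $c_0$ by the log-linear path $f_t=f^{\,t}c_0^{\,1-t}$, $t\in[0,1]$; each $f_t$ is even and, when $p\ge n$, still satisfies the barrier bound because that bound is log-convex in $f$. At $t=0$ the unique even solution of \eqref{k0-eq} with $f_0=c_0$ is the corresponding origin-centred geodesic sphere $\varphi\equiv\bar c$, by the classification recalled after Theorem~\ref{thm-horo-p-CM-problem} (\cite{LX22,HWZ23}). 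Computing the linearisation of \eqref{k0-eq} at $\varphi\equiv\bar c$ gives an operator of the form $\psi\mapsto a(\Delta_{\bbS^n}\psi+b\psi)$ with $a>0$ and $b$ constant, which on even functions is invertible for a suitable choice of $c_0$ (the only possibly resonant eigenvalue of $-\Delta_{\bbS^n}$ lies on the first spherical harmonics, which are odd), so the Leray--Schauder degree of the associated map equals $\pm1$. It then remains to establish uniform a priori estimates along the path.

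These estimates run in the usual order. After the $C^0$ bound $1/C\le\varphi\le C$ (discussed below), a $C^1$ bound follows from the $C^0$ bound and \eqref{k0-eq} by a maximum-principle argument applied to $|D\varphi|^{2}/\varphi^{2}$. A $C^2$ bound comes from the Monge--Amp\`ere $C^2$ estimate on $\bbS^n$: differentiating $\log p_n(A[\varphi])=p\log\varphi+\log f_t$ twice and applying the maximum principle to the largest eigenvalue of $A[\varphi]$ gives $A[\varphi]\le C$, and together with $\det A[\varphi]\ge c>0$ this yields a uniform positive lower bound on the eigenvalues; in particular strict horospherical convexity is preserved and \eqref{k0-eq} stays uniformly elliptic. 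Since the equation is concave in $A[\varphi]$ after taking logarithms, Evans--Krylov and Schauder theory upgrade this to uniform $C^{m,\alpha}$ bounds for every $m$. Openness (the implicit function theorem, using invertibility of the linearisation, which persists along the path) and closedness (the a priori estimates plus preserved convexity) then show that \eqref{k0-eq} with $f_t$ is solvable in $\cC^{+}_{\mathrm{ev}}$ for all $t\in[0,1]$; the case $t=1$ is the assertion.

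The main obstacle is the $C^0$ estimate, which separates into the three regimes visible in the statement. For $-n<p<n$ it is routine: evaluating \eqref{k0-eq} at a maximum and at a minimum of $\varphi$, where $D^2\varphi\le0$ resp.\ $D^2\varphi\ge0$ and $D\varphi=0$, gives $\big(\tfrac12(\varphi-\varphi^{-1})\big)^{n}=\varphi^{p}f_t$ there, hence $1/C\le\varphi\le C$ with $C$ depending only on $n,p,\max f,\min f$. For $p\ge n$ the upper bound is precisely where Assumption~\ref{Assump-barrier-f} is used: comparing $\varphi$ with the support functions of geodesic spheres shows that a uniform bound fails exactly at the thresholds $2^{-n}$ (for $p=n$) and $(p-n)^{(p-n)/2}n^{n}/(p+n)^{(p+n)/2}$ (for $p>n$), so the strict inequalities in (2),(3) produce a $t$-uniform bound. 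The critical case $p=-n$ is the most delicate, since \eqref{k0-eq} then carries no a priori scale: here the $C^0$ bound must exploit the evenness of $\varphi$ to prevent the horospherically convex body from degenerating, by the argument of Chen~\cite{Che23b}, who treated $p=-n$ for all $0\le k\le n-1$ and in particular obtained this bound by an integral/blow-up analysis excluding concentration of the horospherical surface-area measure at a pair of antipodal directions. Granting these $C^0$ bounds, the remaining steps are as above.
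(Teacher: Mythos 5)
Your overall strategy coincides with the paper's: for $k=0$ the equation is $p_n(A[\varphi])=\varphi^{p}f$, no full rank theorem is needed because the lower bound on $A[\varphi]$ follows from the $C^2$ bound together with $p_n(A[\varphi])=\varphi^p f\ge c>0$ (this is exactly Lemma \ref{lem-elliptic-(p_n/p_{n-k})^{1/k}} with $k=0$), Assumption \ref{Assump-barrier-f} with $k=0$ is precisely hypotheses (2)--(3), and existence is obtained by a degree/continuity argument anchored at the constant solution, whose linearization is invertible on even functions. The paper runs this through Li's degree theory with the path $f_t=((1-t)f_{\max}^{-1/(n+p)}+tf^{-1/(n+p)})^{-(n+p)}$ rather than your log-linear path; for $k=0$ either path is admissible.

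There are, however, two concrete gaps. First, your $C^0$ estimate for $-n\le p<n$ does not close as described. At a maximum of $\varphi$ one has $D\varphi=0$ and $D^2\varphi\le 0$, hence only the inequality $\varphi_{\max}^{p}f\le\bigl(\tfrac12(\varphi_{\max}-\varphi_{\max}^{-1})\bigr)^n$, which yields a \emph{lower} bound on $\varphi_{\max}$; the minimum point likewise yields only an \emph{upper} bound on $\varphi_{\min}$. These are the wrong directions for compactness. The missing ingredient, used for \emph{all} $p\ge-n$ in the paper (Lemma \ref{lem-C0-est}), is the evenness inequality $\cosh(\log\varphi_{\max})\le\varphi_{\min}$ from \cite[Lem.~7.2]{LX22}, i.e.\ \eqref{eq-phi max < phi min}, which converts the upper bound on $\varphi_{\min}$ into an upper bound on $\varphi_{\max}$ and the lower bound on $\varphi_{\max}$ into the bound $\varphi_{\min}\ge 1+1/C$. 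With this inequality the case $p=-n$ ($q=-1$, $\xi_{-1}$ strictly decreasing from $+\infty$ to $0$) is no harder than the others, so the blow-up/concentration analysis you propose there is unnecessary. Second, for $p>n$ the constant equation $\tfrac12(c-c^{-1})=c^{p/n}c_0^{1/n}$ has \emph{two} roots when $c_0$ is below the threshold, so "the unique even solution at $t=0$ is a geodesic sphere" is not enough to compute the degree. The paper resolves this by building the constraint $w>\sqrt{(q+1)/(q-1)}$ (with $q=p/n$) into the admissible open set $\cO_R$, and then uses Assumption \ref{Assump-barrier-f} to show no solution along the homotopy can touch that boundary piece: if $\varphi(z_0)=\sqrt{(q+1)/(q-1)}$ at a minimum point, then $A[\varphi(z_0)]\ge\tfrac12(\sqrt{(q+1)/(q-1)}-\sqrt{(q-1)/(q+1)})\sigma$ forces $f_t(z_0)\ge (q+1)^{-\frac{q+1}{2}}(q-1)^{\frac{q-1}{2}}$ raised to the power $n$, contradicting the strict barrier bound. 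Your proposal needs this selection of one root and the accompanying boundary estimate to make the degree at $t=0$ well defined and computable.
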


\begin{remark}
 By assuming $f$ satisfies Assumption \ref{Assump-barrier-f} when $p \geq n$, Theorem \ref{thm-horo-p-Minkowski} removes the constant $\gamma$ for the case $p \geq -n$ in \cite[Thm. 7.2]{LX22}. When $-n \leq p<n$, Theorem \ref{thm-horo-p-Minkowski} reduces to \cite[Thm. 1.3]{Che23a}.
\end{remark}

With the uniqueness result of the isotropic solution to horospherical $p$-Minkowski problem in hyperbolic plane \cite{LW24}, we establish the following existence result. 
\begin{theorem}\label{thm-horo-p-Minkowski-hyperbolic-plane}
    Let $n=1$. Let $f(z)$ be a smooth, positive and even function on $\bbS^1$. Assume that one of the following conditions holds:
    \begin{enumerate}
    \item $-7\leq p<1$;
    \item $p=1$, assume that $0<f<2^{-1}$;
    \item $p>1$, assume that $0<f<\frac{(p-1)^\frac{p-1}{2}}{(p+1)^\frac{p+1}{2}}$.
    \end{enumerate}
    Then the horospherical $p$-Minkowski problem in hyperbolic plane 
    \eq{
    \varphi^{-p}\(\varphi_{\theta\theta}-\frac{1}{2}\frac{\varphi_\theta^2}{\varphi}+\frac{\varphi-\varphi^{-1}}{2}\)=f \quad \text{on $\bbS^1$}
    }
    admits a smooth, even and strictly horospherically convex solution. 
\end{theorem}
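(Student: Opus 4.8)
The plan is to prove Theorem~\ref{thm-horo-p-Minkowski-hyperbolic-plane} by the degree-theoretic continuity method underlying Theorem~\ref{thm-horo-p-Minkowski} (the case $k=0$ of Theorem~\ref{thm-horo-p-CM-problem}), specialized to $n=1$. When $n=1$ the tensor $A[\varphi]$ reduces to the single scalar $\varphi_{\theta\theta}-\tfrac12\varphi_\theta^2/\varphi+\tfrac12(\varphi-\varphi^{-1})$, so the full rank theorem is vacuous, and the substantive ingredients are uniform a priori estimates along a homotopy plus a nonzero Leray--Schauder degree computed at the isotropic solution; for the latter, the $n\ge2$ uniqueness of the isotropic solution (\cite{LX22,HWZ23}) is replaced by the planar uniqueness result of \cite{LW24}, and it is precisely this substitution that yields the threshold $p\ge-7$.

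First I would set the problem in the open cone $\cC\subset C^{2,\alpha}_{\mathrm{even}}(\bbS^1)$ of positive even functions with $A[\varphi]>0$ and write the equation as $\Phi(\varphi):=\varphi^{-p}\,A[\varphi]=f$, a scalar quasilinear ODE with unit leading coefficient, hence uniformly elliptic on each compact subcone. I would connect the given $f$ to a positive even constant $f_1$ through a path $f_t$, $t\in[0,1]$, of positive even functions staying admissible --- for $p\ge1$ this means keeping $f_t$ below the bound of Assumption~\ref{Assump-barrier-f} (e.g.\ $f_t=f^{1-t}f_1^{t}$ with $f_1$ small). The core of this step is the a priori estimates, uniform in $t$: a lower bound $\varphi\ge C^{-1}$, a gradient bound, and crucially a uniform positive lower bound $A[\varphi]\ge\delta$ confining all solutions to a fixed compact subcone (so $\Phi$ stays uniformly elliptic and no solution touches $\partial\cC$), after which $C^{2,\alpha}$ and $C^\infty$ bounds follow by Schauder estimates and bootstrapping of $\varphi_{\theta\theta}=f_t\varphi^{p}+\tfrac12\varphi_\theta^2/\varphi-\tfrac12(\varphi-\varphi^{-1})$. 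The upper bound $\varphi\le C$ is the delicate part: for $p\ge1$ it is exactly where Assumption~\ref{Assump-barrier-f} enters, since $\sup_{\varphi>1}\varphi^{-p}\cdot\tfrac12(\varphi-\varphi^{-1})$ equals $2^{-1}$ when $p=1$ and $(p-1)^{(p-1)/2}/(p+1)^{(p+1)/2}$ when $p>1$, i.e.\ the barrier constants in the statement; for $-7\le p<1$ one exploits the evenness of $\varphi$ together with the hyperbolic lower-order term $\tfrac12(\varphi-\varphi^{-1})$, as in the proof of Theorem~\ref{thm-horo-p-Minkowski}.

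At $t=1$ the datum $f_1$ is a positive even constant, so by \cite{LW24} the unique even strictly horospherically convex solution of $\Phi(\varphi)=f_1$ is a constant $\varphi\equiv c$, $c>1$, solving $c^{-p}\tfrac12(c-c^{-1})=f_1$ (the origin-centered geodesic sphere; for $p>1$ one arranges, using the barrier bound and the lower bound $A[\varphi]\ge\delta$, that only the branch $c>\sqrt{(p+1)/(p-1)}$ lies in the fixed subcone). Linearizing $\Phi$ at $\varphi\equiv c$ gives, up to the positive factor $c^{-p}$, the operator $\psi\mapsto\psi_{\theta\theta}+\mu\,\psi$ with $\mu=\tfrac12\big((1-p)+(1+p)c^{-2}\big)$. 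Since $c>1$, one checks $\mu\in(0,4)$ for every $p\in[-7,1)$ (with $p=-7$ the borderline value where $\mu\uparrow4$ as $c\to\infty$), so $\mu$ avoids the spectrum $\{(2j)^2:j\ge0\}=\{0,4,16,\dots\}$ of $-\partial_\theta^2$ on origin-symmetric functions; hence the linearization is an isomorphism of $C^{2,\alpha}_{\mathrm{even}}(\bbS^1)$ onto $C^{0,\alpha}_{\mathrm{even}}(\bbS^1)$ and $\varphi\equiv c$ is a nondegenerate zero. The Leray--Schauder degree of $\Phi-f_1$ on the fixed bounded open subset of $\cC$ is therefore $(-1)^{\#\{j\ge0:\,(2j)^2<\mu\}}=-1\ne0$ (the case $p=1$, where still $\mu\in(0,1)$, is identical). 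By homotopy invariance the degree of $\Phi-f$ is also $-1$, producing an even $C^{2,\alpha}$ solution of $\Phi(\varphi)=f$; Step~2 makes it smooth and the bound $A[\varphi]\ge\delta>0$ makes it strictly horospherically convex.

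I expect the main obstacles to be (i) the uniform $C^0$ upper bound together with the uniform positive lower bound on $A[\varphi]$ along the homotopy --- sharp at the constants of Assumption~\ref{Assump-barrier-f} when $p\ge1$, and dependent on evenness and on the hyperbolic geometry when $-7\le p<1$ --- and (ii) the non-degeneracy of the linearization at the isotropic solution, where the constant $-7$ appears precisely as the place at which the shifted spectral quantity $\mu$ meets the lowest nonzero eigenvalue $4$ of $-\partial_\theta^2$ on origin-symmetric functions; this is also why the method (and the uniqueness input of \cite{LW24}) cannot be pushed below $p=-7$.
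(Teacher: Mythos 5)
Your proposal follows essentially the same route as the paper's proof: the degree-theoretic homotopy to a constant datum with the a priori estimates of Section 3 (the full rank theorem indeed being unnecessary here, as in Lemma \ref{lem-elliptic-(p_n/p_{n-k})^{1/k}}), the uniqueness input of \cite{LW24} replacing the $n\geq 2$ classification, and the spectral computation of Remark \ref{remark-invertible} showing that $\mu=\tfrac{1-p}{2}+\tfrac{1+p}{2c^2}$ avoids $\{0,4,16,\dots\}$ precisely when $p\geq -7$. The only small inaccuracy is for $p>1$: on the admissible branch $c>\sqrt{(p+1)/(p-1)}$ one has $\mu<0$, so your eigenvalue-count formula actually yields degree $+1$ rather than $-1$ --- immaterial for the conclusion, since only the nonvanishing of the degree is used.
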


\begin{remark}
The range of $p$ in Theorem \ref{thm-horo-p-Minkowski-hyperbolic-plane} is optimal in view of the invertiblity of the linearized operator in degree theory, see Remark \ref{remark-invertible}. 
\end{remark}

The paper is organized as follows. In Section 2, we will collect some basic properties of horospherically convex hypersurfaces and elementary symmetric polynomials. In Section 3, we will derive the a priori $C^2$ estimates for h-convex solutions to equations \eqref{s1:horo-p-CM-problem} and \eqref{Lp-Weingarten-eq-hyperbolic}. In Section 4, we will establish a deformation lemma for equation \eqref{s1:horo-p-CM-problem}, and then use it to obtain a full rank theorem for \eqref{s1:horo-p-CM-problem} in Section 5. In Section 6, we will use the degree theory for nonlinear elliptic operators to prove Theorems \ref{thm-horo-p-CM-problem}--\ref{thm-horo-p-Minkowski-hyperbolic-plane}.

\begin{ack}
Y. Hu was supported by the National Key Research and Development Program of China 2021YFA1001800, the NSFC Grant No.12101027 and the Fundamental Research Funds for the Central Universities. H. Li was supported by NSFC Grant No.12471047. The research leading to these results is part of a project that has received funding from the European Research Council (ERC) under the European Union's Horizon 2020 research and innovation programme (grant agreement No 101001677).
\end{ack}

\section{Preliminaries}
\subsection{Horospherically convex hypersurfaces}
Let $(\mathbb{S}^n, \sigma, D)$ denote the $n$-dimensional unit sphere $\mathbb{S}^n$ equipped with its canonical metric $\sigma$ and Levi-Civita connection $D$. The Minkowski space $\mathbb{R}^{n+1,1}$ is an $(n+2)$-dimensional vector space equipped with the Lorentzian metric
\eq{
	\metric{X}{Y} := \sum_{i=1}^{n+1} x_i y_i - x_{n+2}y_{n+2},
}
where $X = (x_1, \ldots, x_{n+1} , x_{n+2} )$ and $Y =(y_1, \ldots, y_{n+1} , y_{n+2} )$. 

Let $(\bbH^{n+1},\bar g,\bar\nabla)$ denote the $(n+1)$-dimensional hyperbolic space equipped with the metric $\bar g$ and Levi-Civita connection $\bar \nabla$. The hyperboloid model of the hyperbolic space $\mathbb{H}^{n+1}$ is given by
\eq{
	\mathbb{H}^{n+1} = \{ X= (x,x_{n+2}) \in \mathbb{R}^{n+1,1} ~|~ \metric{X}{X} =-1, \ x_{n+2} >0 \},
}
where $X=(x,x_{n+2})=(x_1,x_2,\cdots,x_{n+1},x_{n+2})$.  The horospheres are complete hypersurfaces in $\bbH^{n+1}$ with principal curvatures equal to $1$ everywhere, and the set of horospheres can be parameterized by $\bbS^n \times \bbR$:
	\eq{
		H_z (s) = \{X \in \bbH^{n+1} ~|~ - \metric{X}{(z,1)}  = e^s\}, \quad z \in \bbS^n, \ s \in \bbR,
	}
	where $s$ is the signed geodesic distance from the north pole $N=(0,1)$ to $H_z(s)$, and $z$ is called the center of $H_z(s)$. It is worth noting that, in the Poincar\'e ball model $\bbB^{n+1}$ of the hyperbolic space,  the horosphere $H_z(s)$ is a sphere tangential to $\partial \bbB^{n+1}$ at $z$. Define the horo-ball $B_z(s)$ enclosed by $H_z(s)$ as
	\eq{
		B_z(s) =  \{X \in \bbH^{n+1} ~|~ - \metric{X}{(z,1)}  < e^s\}, \quad z \in \bbS^n, \ s \in \bbR.
	}
Sometimes, it is also convenient to express the hyperbolic space $\bbH^{n+1}$ as the warped product $(0,\infty)\times \bbS^n$ equipped with the metric
\eq{
\bar g= dr^2 +\sinh^2 r \sigma,
}
where $r$ is the geodesic distance to the north pole $N=(0,1)$. It is known that $V=\bar\nabla \cosh r=\sinh r\partial_r$ is a conformal Killing field, i.e., $\bar\nabla(\sinh r\partial_r)=\cosh r \bar g$.
 
A bounded domain $\Omega$ (as well as its boundary $\cM = \partial \Omega$)  in $\bbH^{n+1}$ is called {\em horospherically convex} ({\em h-convex} for short) if for each $X \in \cM$, there is a horosphere enclosing $\Omega$ and touching $\Omega$ at $X$. When $\Omega$ is smooth, it is equivalent to the fact that the principal curvatures of $\cM$ are greater than or equal to $1$. We call a smooth domain $\Omega$ (or $\cM = \partial \Omega$) {\em strictly h-convex} if the principal curvatures of $\cM$ are greater than $1$. A result of Curry \cite{Cur89} shows that an h-convex, complete, immersed hypersurface in hyperbolic space is necessarily embedded and, if noncompact, it must be a horosphere. Therefore, any strictly h-convex hypersurface must be closed and hence it is uniformly h-convex, which means that all the principal curvatures exceed $1+\delta$ for some $\delta>0$. 

Now we collect some definitions and properties related to h-convex hypersurfaces in hyperbolic space. We refer to \cite[Sect. 5]{ACW21}, \cite[Sect. 2]{LX22}, and \cite{EGM09} for details.
Let $(\cM, g)$ be a smooth, strictly h-convex hypersurface in $\mathbb{H}^{n+1}$. Denote by $\nu$ the unit outward normal of $\cM$. Then the {\em horospherical Gauss map} $G: \cM \to \mathbb{S}^n$ is defined by the unique point $z = G(X) \in \mathbb{S}^n$ such that 
\eq{
	X-\nu =e^{-\lambda} (z,1),
}
where $\lambda \in \bbR$ is the signed geodesic distance from $N=(0,1)$ to the horosphere tangential to $\cM$ at $X$ in $\bbH^{n+1}$. Note that the uniform h-convexity of $\cM$ implies that the horospherical Gauss map $G$ is a diffeomorphism. The {\em horospherical support function} of $\cM$ is then defined by 
\eq{\label{def-u-signed-distance}
	u(z) := \log  (-\metric{G^{-1}(z)}{(z,1)}),  \quad z \in \mathbb{S}^n.
}
Then $\lambda (G^{-1} (z)) = u(z)$.  For convenience, we set $\g(z) := e^{u(z)}$.  Denote by $h_{ij}$ the second fundamental form of $\cM$, and let $h_i{}^j = h_{il} g^{lj}$. Then
\begin{equation}\label{shifted principal curv-support}
	\( h_i{}^j (G^{-1} (z)) -\delta_i{}^j\) A_{jl} [\g (z)] = \frac{1}{\g (z)} \sigma_{il},  \quad \forall z \in \mathbb{S}^n, 
\end{equation}
where $A[\g]$ is a symmetric $2$-tensor on $\mathbb{S}^n$ defined by
\begin{equation}\label{def-Aij}
	A [\g] := D^2 \g -\frac{1}{2} \frac{|D \g|^2}{\g} \sigma +\frac{1}{2} \( \g -\frac{1}{\g}\) \sigma.
\end{equation}
Hence, the strict h-convexity of $\cM$ implies that $A[\g]$ is positive definite on $\bbS^n$. Conversely, for any positive $\g \in C^2(\mathbb{S}^n)$ with $A [\g]>0$, we can recover the hypersurface $\cM$ in $\bbH^{n+1}$ through the embedding 
\eq{\label{X(z)}
	\bbS^n \ni z \mapsto \frac{\g}{2}(-z,1) + \(\frac{|D \g|^2}{2\g} +\frac{1}{2 \g} \) (z,1) - (D \g, 0) \in \bbH^{n+1}. 
}

Denote by $\kappa = (\kappa_1, \ldots, \kappa_n)$ the principal curvatures of $\cM$, which are the eigenvalues of the Weingarten matrix $\(h_i{}^j\)$. Then the shifted principal curvature $\tilde{\kappa}=(\tilde \kappa_1,\ldots,\tilde\kappa_n)=(\kappa_1-1,\ldots,\kappa_n-1)$ are the eigenvalues of the matrix $\(h_i{}^j-\delta_i{}^j\)$. In view of \eqref{shifted principal curv-support}, the matrix $\(h_i{}^j(G^{-1}(z))-\delta_i{}^j\)$ is the inverse matrix of $\varphi(z)\sigma^{jl}A_{li}[\varphi(z)]$.

Now, we derive some useful formulas for $A[\g]$. For simplicity, we take 
\eq{\label{s5:psi}
\psi=\frac{1}{2}\frac{|D\varphi|^2}{\varphi}+\frac{1}{2}\(\varphi+\frac{1}{\varphi}\).
}
Then we have
\eq{\label{s5:rel-A-phi-psi}
A_{ij}[\varphi]=\varphi_{ij}+(\varphi-\psi)\sigma_{ij}.
}

In the following, we take an orthonormal basis and assume that $A_{ij}= A_{ij}[\varphi]$ is diagonal at the point where we calculate; that is, $\sigma_{ij}=\delta_{ij}$ and $A_{ij}=\lambda_i \delta_{ij}$.
\begin{lemma}\label{s5:lem-not codazzi}
    We have
\eq{\label{s5:psi_i}
\psi_i &=\frac{\varphi_i}{\varphi}A_{ii}=\frac{\lambda_i \varphi_i}{\varphi}, 
}
\eq{\label{s5:A-Codazzi} 
A_{ij,l}-A_{il,j} = -\frac{\g_l}{\g} A_{ll} \delta_{ij}+ \frac{\g_j}{\g} A_{jj} \delta_{il},
}
\eq{\label{s5:psi-ii} 
\psi_{ii}&=\frac{A_{ii}^2}{\varphi}+\(\psi-\varphi\)\frac{A_{ii}}{\varphi}-2\frac{\varphi_i^2}{\varphi^2}A_{ii}+\sum_l \frac{\varphi_{l}}{\varphi}A_{ii,l}+\sum_{l}\frac{\varphi_{l}^2}{\varphi^2}A_{ll}.
}
and
\eq{ \label{s5:Aiiaa-Aaaii}
A_{ii,\alpha\alpha}-A_{\alpha\alpha,ii}=&\sum_l\frac{\varphi_{l}}{\varphi}(A_{ii,l}-A_{\alpha\alpha,l})+\frac{1}{\varphi}(A_{ii}^2-A_{\alpha\alpha}^2)\\
&+\frac{\psi}{\varphi}(A_{ii}-A_{\alpha\alpha})-\frac{2}{\varphi^2}(A_{ii}\varphi_i^2-A_{\alpha\alpha}\varphi_\alpha^2).
}
\end{lemma}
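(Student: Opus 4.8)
The plan is to derive all four identities by direct computation from the single relation \eqref{s5:rel-A-phi-psi}, namely $A_{ij}[\varphi]=\varphi_{ij}+(\varphi-\psi)\sigma_{ij}$ with $\psi$ given by \eqref{s5:psi}, supplemented by the Ricci identity on the round sphere $(\bbS^n,\sigma)$ of constant curvature one, which for a function reads $\varphi_{ijl}-\varphi_{ilj}=\varphi_j\sigma_{il}-\varphi_l\sigma_{ij}$ (equivalently, $D^2\varphi+\varphi\sigma$ is a Codazzi tensor). The only simplification I will exploit is that, at the chosen point, $A_{ij}$ and hence $\varphi_{ij}=A_{ij}-(\varphi-\psi)\sigma_{ij}$ are diagonal; in particular $\sum_m\varphi_m\varphi_{mi}=\varphi_i\varphi_{ii}$ and $\varphi_{ii}=A_{ii}-\varphi+\psi$ at that point.

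First I would establish \eqref{s5:psi_i}. Differentiating \eqref{s5:psi} gives
\[
\psi_i=\frac{\sum_m\varphi_m\varphi_{mi}}{\varphi}-\frac{|D\varphi|^2}{2\varphi^2}\,\varphi_i+\frac12\varphi_i-\frac1{2\varphi^2}\varphi_i .
\]
Replacing $\varphi_m\varphi_{mi}$ by $\varphi_i\varphi_{ii}$ and $\varphi_{ii}$ by $A_{ii}-\varphi+\psi$, and using $\psi/\varphi=\frac{|D\varphi|^2}{2\varphi^2}+\frac12+\frac1{2\varphi^2}$, the coefficient of $\varphi_i$ that survives after extracting $\varphi_iA_{ii}/\varphi$ vanishes identically, giving $\psi_i=\varphi_iA_{ii}/\varphi=\lambda_i\varphi_i/\varphi$. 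Carrying out the same computation before diagonalising shows that the tensorial identity $\psi_i=\sum_m\varphi_mA_{mi}/\varphi$ also holds, which I will use below. Identity \eqref{s5:A-Codazzi} then follows from $A_{ij,l}-A_{il,j}=(\varphi_{ijl}-\varphi_{ilj})+(\varphi_l-\psi_l)\sigma_{ij}-(\varphi_j-\psi_j)\sigma_{il}$: substituting the Ricci identity makes the bare $\varphi$-terms cancel, leaving $\psi_j\sigma_{il}-\psi_l\sigma_{ij}$, into which one inserts \eqref{s5:psi_i}.

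For \eqref{s5:psi-ii} I would differentiate the tensorial form $\psi_i=\sum_m\varphi_mA_{mi}/\varphi$ once more and then evaluate at the diagonalising point; this produces three pieces: $\sum_m\varphi_{mi}A_{mi}/\varphi$, which equals $A_{ii}^2/\varphi+(\psi-\varphi)A_{ii}/\varphi$; the piece $\sum_m\varphi_mA_{mi,i}/\varphi$, which after converting $A_{mi,i}$ into $A_{ii,m}$ via the Codazzi relation \eqref{s5:A-Codazzi} becomes $\sum_l\varphi_lA_{ii,l}/\varphi-\varphi_i^2A_{ii}/\varphi^2+\sum_l\varphi_l^2A_{ll}/\varphi^2$; and the piece $-\varphi_i^2A_{ii}/\varphi^2$. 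Adding these gives exactly \eqref{s5:psi-ii}. Finally, for \eqref{s5:Aiiaa-Aaaii} I would write $A_{ii,\alpha\alpha}-A_{\alpha\alpha,ii}=(\varphi_{ii\alpha\alpha}-\varphi_{\alpha\alpha ii})+(\varphi_{\alpha\alpha}-\varphi_{ii})+(\psi_{ii}-\psi_{\alpha\alpha})$ (the case $i=\alpha$ being trivial), evaluate the fourth-order commutator $\varphi_{ii\alpha\alpha}-\varphi_{\alpha\alpha ii}$ by iterating the Ricci identity on $\bbS^n$ — equivalently by differentiating the Codazzi relation \eqref{s5:A-Codazzi} and applying the Ricci identity to the $2$-tensor $A$ — and then substitute \eqref{s5:psi-ii} for $\psi_{ii}-\psi_{\alpha\alpha}$; all curvature and lower-order contributions then collapse into the stated right-hand side.

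The arguments are elementary; the only genuine nuisance, and the step most prone to sign and bookkeeping errors, is the higher-derivative commutation needed for \eqref{s5:psi-ii} and especially \eqref{s5:Aiiaa-Aaaii}. Two points require care: one must keep accurate track of the curvature terms produced by the Ricci identity and of the cross terms $\sum_m\varphi_mA_{mi,l}$; and one may not differentiate a relation that has already been diagonalised — the identities \eqref{s5:psi_i} and \eqref{s5:A-Codazzi} should be differentiated in tensorial form and only afterwards evaluated at the base point where $A_{ij}$ is diagonal.
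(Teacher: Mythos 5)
Your proposal is correct and follows essentially the same route as the paper: differentiate the definition of $\psi$ to get the tensorial identity $\psi_i=\sum_m\varphi_m A_{mi}/\varphi$, combine it with the Codazzi property of $D(D^2\varphi+\varphi\sigma)$ to obtain \eqref{s5:A-Codazzi}, differentiate the tensorial form of \eqref{s5:psi_i} once more (converting $A_{mi,i}$ to $A_{ii,m}$ via \eqref{s5:A-Codazzi}) to get \eqref{s5:psi-ii}, and reduce \eqref{s5:Aiiaa-Aaaii} to the Ricci identity $\varphi_{ii,\a\a}-\varphi_{\a\a,ii}=2\varphi_{ii}-2\varphi_{\a\a}$ plus \eqref{s5:psi-ii}. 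Your caution about differentiating only tensorial (not already-diagonalised) identities is exactly the right point of care, and all steps check out.
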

	\begin{proof}
		Using the definition of $\psi$ in \eqref{s5:psi}, we have
		\eq{
		 \psi_i & = \frac{ \g_l \g_{li} }{\g} - \frac{1}{2} \frac{|D \g|^2}{\g^2} \g_i + \frac{1}{2} \(\g_i -\frac{1}{\g^2} \g_i\) \\
			& = \frac{\g_l}{\g} \( \g_{li} - \frac{1}{2} \frac{|D \g|^2}{\g} \delta_{li} + \frac{1}{2} ( \g - \frac{1}{\g} ) \delta_{li} \)\\
			& = \frac{\g_l}{\g} A_{li} = \frac{\g_i}{\g} A_{ii}=\frac{\lambda_i \varphi_i}{\varphi}.
		}
		Thus we obtain \eqref{s5:psi_i}. Then formula \eqref{s5:A-Codazzi} follows from \eqref{s5:psi_i} and the well-known fact that $D(D^2 \varphi +\varphi \sigma)$ is a Codazzi tensor on $\mathbb{S}^n$, 
		\eq{
			A_{ij,l} -A_{il,j} &= D_l (\varphi_{ij} +\varphi \delta_{ij}) - \psi_l \delta_{ij} -D_j (\varphi_{il} +\varphi \delta_{il}) + \psi_j \delta_{il}\\
			&= - \psi_l \delta_{ij}+ \psi_j \delta_{il} \\
			&= -\frac{\varphi_l}{\varphi} A_{ll} \delta_{ij} + \frac{\varphi_j}{\varphi} A_{jj} \delta_{il}.
		}
        Using \eqref{s5:psi_i}, for fixed index $i$ we have,
		\eq{\label{s5:psi-ii-1}
			\psi_{ii} &= D_i \( \frac{\varphi_l}{\varphi} A_{li} \) = \frac{\varphi_{li}}{\varphi} A_{li} - \frac{\varphi_l \varphi_i A_{li} }{\varphi^2} + \frac{\varphi_l}{\varphi} A_{li,i}  \\
			&= \frac{\varphi_{ii}}{\varphi} A_{ii} - \frac{\varphi_i^2}{\varphi^2} A_{ii} + \sum_l \frac{\varphi_l}{\varphi} A_{li,i}. 
		}
		By \eqref{s5:rel-A-phi-psi}, we have
		\eq{
			\frac{\varphi_{ii}}{\varphi} A_{ii} = \frac{A_{ii}^2}{\varphi} + \( \frac{\psi}{\varphi} -1\) A_{ii}.
		}
		On the other hand, it follows from \eqref{s5:A-Codazzi} that
		\eq{
			\sum_l \frac{\varphi_l}{\varphi} A_{li,i}  &= \sum_l \frac{\varphi_l}{\varphi} \( A_{ii,l} - \frac{\varphi_i}{\varphi} A_{ii} \delta_{li} + \frac{\varphi_l}{\varphi} A_{ll} \)\\
			&=\sum_l \frac{\varphi_l A_{ii,l}}{\varphi} -\frac{\varphi_i^2}{\varphi^2} A_{ii} + \sum_l \frac{\varphi_l^2}{\varphi^2} A_{ll}.
		}
		Inserting the above two formulas into \eqref{s5:psi-ii-1}, we obtain \eqref{s5:psi-ii}.
        Note that 
		\eq{
			A_{ii, \a\a} &=  \varphi_{ii, \a\a} + \varphi_{\a\a} - \psi_{\a\a}, \\
			A_{\a\a, ii} &= \varphi_{\a\a, ii}+\varphi_{ii} -  \psi_{ii}.
		}
		Using the Ricci identity for tensors on $\mathbb{S}^n$, we have
		\eq{
			\varphi_{ii,\a\a} - \varphi_{\a\a, ii} = 2\varphi_{ii} -2 \varphi_{\a\a},
		}
		and thus
		\eq{
			A_{ii,\a\a} - A_{\a\a, ii} &= ( \varphi_{ii, \a\a} - \varphi_{\a\a, ii} ) + ( \varphi_{\a\a} - \varphi_{ii} ) +\psi_{ii} -  \psi_{\a\a}\\
			&= \varphi_{ii} -\varphi_{\a\a} + \psi_{ii} -\psi_{\a\a}\\
            &= A_{ii}-A_{\a\a} + \psi_{ii}- \psi_{\a\a}.
		}
		The formula \eqref{s5:Aiiaa-Aaaii} follows by inserting \eqref{s5:psi-ii} into the above formula. This completes the proof of Lemma \ref{s5:lem-not codazzi}.
		\end{proof}

\subsection{Elementary symmetric polynomials}
The $k$th elementary symmetric polynomial $S_k:\mathbb{R}^n \to \mathbb{R}$ is defined by
\eq{
S_k(\lambda):=\sum_{1 \leq i_1 < \cdots <i_k \leq n} \lambda_{i_1} \cdots \lambda_{i_k}, \quad k=1,2,\ldots,n.
}
We also set $S_0=1$ and $S_{k}=0$ if either $k<0$ or $k>n$ by convention. This definition of $S_k$ can be extended to symmetric matrices $A$ as follows (see \cite[Prop. 2.2]{GM03}): Let $A=(A_{ij})$ be an $(n\times n)$ symmetric matrix, $S_k$ can be defined as 
\eq{
S_k(A) &:=\frac{1}{k!}\sum_{\substack{i_1,\ldots,i_k=1\\j_1,\ldots,j_k=1}}^{n} \delta_{i_1\ldots i_k}^{j_1 \ldots j_k}A_{i_1 j_1}\ldots A_{i_k j_k}.
}
where $\delta_I^J$ for the indices $I=(i_1,\ldots,i_m)$ and $J=(j_1,\ldots,j_m)$ is defined as 
\eq{
\delta_I^J=\left\{\begin{aligned}
    &1, \quad  &\text{if $I$ is an even permutation of $J$};\\
    &-1, \quad &\text{if $I$ is an odd permutation of $J$};\\
    &0, \quad &\text{otherwise}.
\end{aligned}\right.
}
When $A$ is diagonal, then $S_k(A)=S_k(\lambda(A))$, where $\lambda(A)=(\lambda_1,\ldots,\lambda_n)$ are the eigenvalues of the matrix $A$. Sometimes it is useful to express the derivatives of $S_k$ explicitly (see \cite[Prop. 2.2]{GM03}):
\eq{
S_k^{ij}(A) &:=\frac{\partial S_k(A)}{\partial A_{ij}}=\frac{1}{(k-1)!}\sum_{\substack{i_1,\ldots,i_{k-1}=1\\j_1,\ldots,j_{k-1}=1}}^{n} \delta_{i i_1\ldots i_{k-1}}^{j j_1 \ldots j_{k-1}}A_{i_1 j_1} \ldots A_{i_{k-1} j_{k-1}},\\
S_k^{ij,rs}(A) &:=\frac{\partial^2 S_k(A)}{\partial A_{ij}\partial A_{rs}}= \frac{1}{(k-2)!}\sum_{\substack{i_1,\ldots,i_{k-2}=1\\j_1,\ldots,j_{k-2}=1}}^{n} \delta_{i r i_1\ldots i_{k-2}}^{j s j_1 \ldots j_{k-2}}A_{i_1j_1}\ldots A_{i_{k-2}j_{k-2}},
}
When $A$ is diagonal, $S_k^{ij}(A)$ is diagonal with
\eq{
S_k^{ij}(A)=\left\{  
		\begin{aligned}
            &S_{k-1}(A|i), \quad &{\rm if } \ i=j,\\
            &0,             \quad &{\rm otherwise},
            \end{aligned}\right.
}
and
\eq{
		S_{k}^{ij, rs}(A) = \left\{  
		\begin{aligned}
			&S_{k-2} (A| ir), \quad& {\rm if } \ i=j, r=s, i \neq r,\\
			&-S_{k-2} (A|ij), \quad& {\rm if} \ i=s, j=r, i \neq j, \\
			&0, \quad& {\rm otherwise}.
		\end{aligned}
		\right.
		\label{Sm-ijrs}
}
Here $S_{k-1}(A|i)$ is the symmetric function with $\lambda_i=0$, and $S_{k-2}(A|ij)$ is the symmetric function with $\lambda_i=\lambda_j=0$. 

Let $F$ denote a symmetric, smooth functions of the eigenvalues of  $(A_{i}{}^j)=(\sigma^{jk}A_{ij})$ or as depending on $A=(A_{ij})$ and the spherical metric $\sigma=(\sigma_{ij})$, 
\eq{
F=F(\lambda_i)=F(A_i{}^j)=F(\sigma,A)
}
Denote by
\eq{
F_i^j :=\frac{\partial F}{\partial A_j{}^i},\quad F^{ij} := \frac{\partial F}{\partial A_{ij}}, \quad F^{ij,kl} := \frac{\partial^2 F}{\partial A_{ij}\partial A_{kl}}.
}
We collect some basic formulas for the elementary symmetric polynomial $S_k$ $(1 \leq k\leq n)$.
\begin{lemma}\cite{Gua13}\label{lem-Newton poly}
    For any integer $1 \leq k \leq n$, we have
		\begin{align}
		\sum_{i,j}(S_k)_i^j(A) \delta_j{}^i =&~ (n-k+1) S_{k-1}(A), \label{Sm-ij delta-ij}\\
		\sum_{i,j} (S_k)_i^j(A) A_j{}^i =&~ k S_k(A), \label{Sm-ij A-ij}\\
		\sum_{i,j} (S_k)_i^j(A) A_j{}^k A_k{}^i =&~ S_1(A) S_k (A) -(k+1) S_{k+1} (A). \label{Sm-ij A^2-ij}
				\end{align}
	\end{lemma}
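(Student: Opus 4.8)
The plan is to establish all three identities by first reducing to the case in which $A$ is diagonal in a $\sigma$-orthonormal frame, and then carrying out an elementary counting argument on the monomials of the elementary symmetric polynomials.

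First I would observe that each of the three contractions on the left-hand sides of \eqref{Sm-ij delta-ij}, \eqref{Sm-ij A-ij} and \eqref{Sm-ij A^2-ij} depends only on the eigenvalues $\lambda=(\lambda_1,\dots,\lambda_n)$ of the endomorphism $A_i{}^j=\sigma^{jl}A_{il}$, hence is unchanged under $A\mapsto Q^{\mathrm T}AQ$ for any $\sigma$-orthogonal $Q$. Since a symmetric $2$-tensor is always diagonalizable in such a frame, it suffices to prove the three formulas when $\sigma_{ij}=\delta_{ij}$ and $A_{ij}=\lambda_i\delta_{ij}$. In this situation $(S_k)_i{}^j(A)$ is diagonal with $(S_k)_i{}^i(A)=S_{k-1}(A|i)=S_{k-1}(\lambda|i)$, as recorded just above the statement, so all three left-hand sides become single sums over the index $i$.

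For \eqref{Sm-ij delta-ij}, one has $\sum_i(S_k)_i{}^i(A)=\sum_i S_{k-1}(\lambda|i)$, and each monomial $\lambda_{j_1}\cdots\lambda_{j_{k-1}}$ of $S_{k-1}(\lambda)$ is produced exactly once for each of the $n-(k-1)$ indices $i\notin\{j_1,\dots,j_{k-1}\}$, which gives $(n-k+1)S_{k-1}(\lambda)$. For \eqref{Sm-ij A-ij}, $\sum_i\lambda_i(S_k)_i{}^i(A)=\sum_i\lambda_i S_{k-1}(\lambda|i)$, and the products $\lambda_i\lambda_{j_1}\cdots\lambda_{j_{k-1}}$ with $i\notin\{j_1,\dots,j_{k-1}\}$ run over all $k$-element subsets, each appearing $k$ times according to the choice of the distinguished index $i$, so the sum equals $kS_k(\lambda)$. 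For \eqref{Sm-ij A^2-ij}, I would invoke the basic recursion $S_k(\lambda)=\lambda_i S_{k-1}(\lambda|i)+S_k(\lambda|i)$ to rewrite $\lambda_i^2 S_{k-1}(\lambda|i)=\lambda_i S_k(\lambda)-\lambda_i S_k(\lambda|i)$; summing over $i$ produces $S_1(\lambda)S_k(\lambda)-\sum_i\lambda_i S_k(\lambda|i)$, and the same distinguished-index count applied now to $(k+1)$-element subsets gives $\sum_i\lambda_i S_k(\lambda|i)=(k+1)S_{k+1}(\lambda)$, whence the right-hand side $S_1(\lambda)S_k(\lambda)-(k+1)S_{k+1}(\lambda)$.

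The argument is entirely routine and no step presents a real difficulty; the only places warranting a line of care are the invariance-and-diagonalization reduction at the outset and the combinatorial bookkeeping in the third identity. Alternatively, all three formulas are standard polarization identities for Newton tensors and may simply be quoted from \cite{Gua13}.
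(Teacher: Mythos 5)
Your proof is correct. The paper itself offers no proof of this lemma --- it is simply quoted from \cite{Gua13} --- and your reduction to the diagonal case followed by the distinguished-index count (together with the recursion $S_k(\lambda)=\lambda_i S_{k-1}(\lambda|i)+S_k(\lambda|i)$ for the third identity) is the standard, complete verification of these identities for the Newton tensors.
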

Define the G{\aa}rding cone by
\begin{equation*}
		\Gamma_m := \{x \in \mathbb{R}^n:~S_i(x)>0, \ 1 \leq i \leq m  \}, \quad m=1, \ldots, n.
\end{equation*}
Equivalently, $\Gamma_m$ is the connected component of $\{x:~S_m(x)>0\} \subset \mathbb{R}^n$ which contains the positive cone 
$\Gamma_n=\{ x \in \bbR^n:~ x_i>0, \ 1 \leq i \leq n \}.
$
The normalized $k$-th elementary symmetric polynomial $p_k$ is defined by $p_k(\lambda)=\binom{n}{k}^{-1} S_k(\lambda)$ for all $1\leq k\leq n$, and $p_0=1$ and $p_k=0$ if either $i<0$ or $i>n$ by convention.

We assume that $F$ is a strictly increasing, homogeneous of degree one, and concave function of $\lambda(A)$ in some open cone $\Gamma\subset \bbR^n$ containing the positive cone $\Gamma_n$, and it is normalized that $F(1,\ldots, 1) =1$.
Then we have
\eq{
F^{ij} >0, \quad F^{ij} A_{ij} =F, \quad {F}^{ij, kl} \leq 0,
}
and
\eq{
\sum F^{ii} \geq 1.
}
It is well known that if the above $F$ is chosen as $p_{n-k}^{1/(n-k)}$ and $(p_n/p_k)^{1/(n-k)}$, then the corresponding open cones $\Gamma\subset\bbR^n$ are $\Gamma_{n-k}$ and $\Gamma_n$, respectively. We refer to \cite[Chap. 2]{Ger06b} for a detailed account and common properties of $F$.

\section{A priori estimates}
Throughout this section, we consider the positive, even solution $\varphi\in C^2(\bbS^n) $ to the following equation:  
\begin{equation}\label{eq-Lp-Weingarten-eq-hyperbolic by q}
	F(A[\varphi])= \g^q f^{\frac{1}{n-k}}, \quad q:= \frac{n+p}{n-k} -1,
\end{equation}
where $F=p_{n-k}^{1/(n-k)}$ or $F=(p_n/p_k)^{1/(n-k)}$, and $f$ is a smooth positive function on $\bbS^n$ that satisfies Assumption \ref{Assump-barrier-f} when $q \geq 1$ (i.e. $p \geq n-2k$). Moreover, if $F=p_{n-k}^{1/(n-k)}$, we assume that $A[\varphi]\in \Gamma_{n-k}\cap \overline{\Gamma}_n$; 
With the help of the full rank theorem (Theorem \ref{thm-full-rank}), this equation \eqref{eq-Lp-Weingarten-eq-hyperbolic by q} corresponds to the horospherical $p$-Christoffel-Minkowski problem \eqref{s1:horo-p-CM-problem}. If $F=(p_n/p_k)^{1/(n-k)}$, we assume that $A[\varphi]\in \Gamma_n$; This equation \eqref{eq-Lp-Weingarten-eq-hyperbolic by q} corresponds to the prescribed $p$-shifted Weingarten curvature problem \eqref{Lp-Weingarten-eq-hyperbolic}.

The horospherical support function $u(z)$ of an h-convex domain $\Omega$ is defined by
	\eq{
		u(z) = \inf \{ s \in \bbR:~ \Omega \subset \overline{B}_z(s) \}, \quad z \in \bbS^n,
	}
	which is exactly \eqref{def-u-signed-distance} when $\partial \Omega$  is smooth and strictly h-convex.
It was proved in \cite[Prop. 2.1]{LX22} that $\log \varphi$ is the horospherical support function of a non-empty, h-convex domain $\Omega :=\cap_{z \in \mathbb{S}^n} \overline{B}_z \( \log \varphi(z) \)$ when $A[\varphi]$ is positive semi-definite on $\mathbb{S}^n$. And it was proved in \cite[Cor. 2.3]{LX22} that the map $X:\mathbb{S}^n\ra \partial \Omega$ given by \eqref{X(z)} is surjective. If  $\varphi$ is in addition even on $\mathbb{S}^n$, i.e., $\varphi(z)= \varphi(-z)$ for all $z \in \mathbb{S}^n$, then we have that $\varphi \geq 1$ on $\mathbb{S}^n$ as $N=(0,1) \in \Omega$. Moreover, if $\varphi(z_0) =1$ for some $z_0 \in \mathbb{S}^n$ then  $\varphi(z) \equiv 1$, $A[\varphi(z)] \equiv 0$, and $ \cap_{z \in \mathbb{S}^n} \overline{B}_z ( \log \varphi(z) )$ is exactly the point $(0,1) \in \mathbb{H}^{n+1}$. Hence we can assume that $\varphi(z) >1$ in the a priori estimate. By using $X(z_1) \in \overline{B}_{z} \(\log \varphi(z) \)$, $\forall z \in \bbS^n$, where $z_1$ is a maximum point of $\varphi$ on $\bbS^n$, and the origin-symmetry of $\Omega$, it was proved in \cite[Lem. 7.2]{LX22} that
\begin{align}\label{eq-phi max < phi min}
	\cosh \( \log \g_{\max}  \) &\leq \g_{\min}, 
\end{align}
where 
\eq{
	\g_{\max} := \max_{ z \in \mathbb{S}^n} \g(z), \quad 	\g_{\min} := \min_{ z \in \mathbb{S}^n} \g(z).
 }
Then, as an application of the above \eqref{eq-phi max < phi min}, it was proved in \cite[Lem. 7.3]{LX22} that 
\eq{\label{eq-Dphi<phi}
|D \log \varphi(z)| <1, \quad \forall \ z \in \mathbb{S}^n.
}

Note that the Assumption \ref{Assump-barrier-f} on $f$ can be reformulated as follows: assume $0<f<2^{k-n}$ when $q=1$, and assume $0<f< (\frac{(q+1)^\frac{q+1}{2}}{(q-1)^\frac{q-1}{2}})^{k-n}$ when $q>1$.

\begin{lemma}[$C^0$-estimate]\label{lem-C0-est}
 We have
	\begin{equation}\label{eq-C0}
		1+\frac{1}{C} \leq \g \leq C,
	\end{equation}
	where $C>0$ is a constant depending only on $n$, $k$, $p$, $ f_{\min}$ and $\|f \|_{C^0}$.
\end{lemma}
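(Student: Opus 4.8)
The plan is to derive the two-sided bound on $\varphi$ from the structure of equation \eqref{eq-Lp-Weingarten-eq-hyperbolic by q} together with the geometric constraints \eqref{eq-phi max < phi min} and \eqref{eq-Dphi<phi}. First I would record that the lower bound $\varphi \geq 1 + \tfrac1C$ is essentially already in hand: we have $\varphi \geq 1$ from origin-symmetry, and if $\varphi$ touched $1$ then $\varphi \equiv 1$, which is incompatible with \eqref{eq-Lp-Weingarten-eq-hyperbolic by q} since $F(A[\varphi]) = F(0) = 0 \neq \gamma^q f^{1/(n-k)} > 0$; a quantitative version follows because $\cosh(\log\varphi_{\max}) \leq \varphi_{\min}$ forces $\varphi_{\min}$ away from $1$ as soon as $\varphi_{\max} > 1$, and the upper bound on $\varphi_{\max}$ (obtained below) then gives the explicit constant. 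So the real content is the upper bound $\varphi \leq C$.

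For the upper bound I would evaluate the equation at a maximum point $z_1$ of $\varphi$. There $D\varphi(z_1) = 0$ and $D^2\varphi(z_1) \leq 0$, so from \eqref{s5:rel-A-phi-psi} and the definition \eqref{s5:psi} we get, at $z_1$,
\eq{
A_{ij}[\varphi] = \varphi_{ij} + \tfrac12\Big(\varphi - \tfrac1\varphi\Big)\sigma_{ij} \leq \tfrac12\Big(\varphi_{\max} - \tfrac1{\varphi_{\max}}\Big)\sigma_{ij}.
}
Since $F$ is monotone, $F(A[\varphi](z_1)) \leq F\big(\tfrac12(\varphi_{\max} - \varphi_{\max}^{-1})\,\sigma\big) = \tfrac12(\varphi_{\max} - \varphi_{\max}^{-1})$, using the normalization $F(1,\dots,1)=1$ and homogeneity of degree one. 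On the other hand the right-hand side at $z_1$ is $\gamma^q f^{1/(n-k)}(z_1) \geq \varphi_{\max}^q f_{\min}^{1/(n-k)}$ when $q \geq 0$ (and when $-1 < q < 0$ we use $\varphi \geq 1$ to bound $\varphi^q \geq \varphi_{\max}^q \cdot (\text{something})$, or rather handle the sign of $q$ by comparing at the minimum point instead). This yields
\eq{
\varphi_{\max}^{q}\, f_{\min}^{\frac1{n-k}} \leq \tfrac12\Big(\varphi_{\max} - \tfrac1{\varphi_{\max}}\Big) \leq \tfrac12\,\varphi_{\max},
}
hence $\varphi_{\max}^{q-1} \leq \tfrac12 f_{\min}^{-1/(n-k)}$. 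When $q < 1$ (i.e. $p < n-2k$) this immediately bounds $\varphi_{\max}$ by a constant depending on $n,k,p,f_{\min}$. When $q = 1$ or $q > 1$ the same inequality is an obstruction unless $f_{\min}$ is small, and this is precisely where Assumption \ref{Assump-barrier-f} enters: the reformulation $0 < f < 2^{k-n}$ for $q=1$ and $0 < f < \big((q+1)^{(q+1)/2}/(q-1)^{(q-1)/2}\big)^{k-n}$ for $q>1$ is exactly the condition making a constant-$\varphi$ sub/supersolution available. So for $q \geq 1$ I would instead compare $\varphi$ against the constant solution: build barriers $\varphi \equiv c_\pm$ where $c_\pm$ solve $F(A[c]) = c^q f^{1/(n-k)}$, i.e. $\tfrac12(c - c^{-1}) = c^q \cdot(\text{const})$, whose solvability for an interval of $c$ is guaranteed by Assumption \ref{Assump-barrier-f}, and apply the maximum principle to $\varphi$ versus these barriers. (The function $g(c) = \tfrac12(c-c^{-1})c^{-q}$ attains its maximum, over $c \geq 1$, at $c = \sqrt{(q+1)/(q-1)}$ with value $(q-1)^{(q-1)/2}/(q+1)^{(q+1)/2}$, which is where the constant in Assumption \ref{Assump-barrier-f} comes from.)

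I expect the main obstacle to be the case $q \geq 1$, where a naive maximum-point argument is insufficient and one genuinely needs the barrier construction tied to Assumption \ref{Assump-barrier-f}: one must check that the constant barrier $c_+$ lies in the correct cone ($\Gamma_{n-k}\cap\overline\Gamma_n$ or $\Gamma_n$ as appropriate — trivially true since $A[c_+] = \tfrac12(c_+ - c_+^{-1})\sigma > 0$), verify the correct inequality direction for the comparison, and then invoke the comparison principle for the (degenerate) fully nonlinear operator $F(A[\cdot]) - \varphi^q f^{1/(n-k)}$ at an interior extremum of $\varphi/c_+$ or $\varphi - c_+$. A secondary technical point is tracking the dependence of the final constant $C$ on $\|f\|_{C^0}$ in the lower bound: having bounded $\varphi_{\max}$, inequality \eqref{eq-phi max < phi min} reads $\tfrac12(\varphi_{\max} + \varphi_{\max}^{-1}) \leq \varphi_{\min}$, so $\varphi_{\min} \geq \tfrac12(\varphi_{\max} + \varphi_{\max}^{-1}) > 1$ whenever $\varphi_{\max} > 1$; combined with the fact (from the equation at the min point) that $\varphi_{\max} - 1$ is bounded below in terms of $f_{\max}$, this produces the explicit lower bound $1 + 1/C$. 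I would assemble these pieces, being careful that in the regime $-n \le p < n-2k$ no smallness of $f$ is needed, consistent with the statement of the lemma.
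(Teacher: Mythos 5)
There is a genuine gap, and it sits exactly where you claim the argument is ``immediate''. From the maximum point you correctly derive $\varphi_{\max}^{\,q-1}\le \tfrac12 f_{\min}^{-1/(n-k)}$, but for $q<1$ the map $t\mapsto t^{q-1}$ is \emph{decreasing} on $(1,\infty)$, so this inequality gives a \emph{lower} bound $\varphi_{\max}\ge \bigl(\tfrac12 f_{\min}^{-1/(n-k)}\bigr)^{1/(q-1)}$, not an upper bound. Since $q<1$ covers the entire range $-n\le p<n-2k$ (the main case, where no smallness of $f$ is assumed), your proposal as written does not produce the upper bound $\varphi\le C$ there. The missing idea is the one the paper uses: evaluate the equation at the \emph{minimum} point $z_2$, where $D\varphi=0$ and $D^2\varphi\ge 0$ give $F(A[\varphi])\ge\tfrac12(\varphi_{\min}-\varphi_{\min}^{-1})$ and hence an \emph{upper} bound on $\varphi_{\min}$ (via the monotonicity of $\xi_q(t)=2t^q(t-t^{-1})^{-1}$), and then invoke the origin-symmetry estimate \eqref{eq-phi max < phi min}, $\tfrac12(\varphi_{\max}+\varphi_{\max}^{-1})\le\varphi_{\min}$, to upgrade this to $\varphi_{\max}\le 2\varphi_{\min}\le C$. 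In your write-up \eqref{eq-phi max < phi min} is used only for the lower bound; its essential role in obtaining the upper bound is absent, and your closing sentence even attributes the lower bound on $\varphi_{\max}$ to ``the equation at the min point'' in terms of $f_{\max}$, which has the roles of the two critical points inverted (the lower bound on $\varphi_{\max}$ comes from the max point and $f_{\min}$).

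Your assessment of where the difficulty lies is also inverted for $q>1$: there $q-1>0$, so the very same max-point inequality $\varphi_{\max}^{\,q-1}\le\tfrac12 f_{\min}^{-1/(n-k)}$ \emph{does} yield an upper bound directly, and no barrier construction is needed. The paper handles $q\ge 1$ not by a comparison principle but by a careful analysis of the non-monotone function $\xi_q$ (it decreases on $(1,\sqrt{(q+1)/(q-1)}\,]$ and increases afterwards, with minimum value $(q+1)^{(q+1)/2}/(q-1)^{(q-1)/2}$, which is where Assumption \ref{Assump-barrier-f} enters to keep $(f_{\max})^{-1/(n-k)}$ above that minimum), split into sub-cases according to the position of $\varphi_{\min},\varphi_{\max}$ relative to $\sqrt{(q+1)/(q-1)}$, again combined with \eqref{eq-phi max < phi min}. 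A comparison-principle argument against constant barriers would in any case need justification for the degenerate operator $F=p_{n-k}^{1/(n-k)}$ with $A[\varphi]$ only in $\Gamma_{n-k}\cap\overline\Gamma_n$ and for the zeroth-order term $-\varphi^q f^{1/(n-k)}$, whose sign structure does not obviously permit it. Your identification of the critical value of $g(c)=\tfrac12(c-c^{-1})c^{-q}$ and of the role of Assumption \ref{Assump-barrier-f} is correct, but the core two-sided bound needs to be reassembled along the lines above.
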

\begin{proof}
	For simplicity, we define the function $\xi_q : (1, +\infty) \to \mathbb{R}_+$ by 
 \eq{ \label{def-zeta-q}
 \xi_q (t) =2 t^{q} (t-t^{-1})^{-1}.
 }
 Assume that $\g$ attains its maximum at $z_1 \in \mathbb{S}^n$. Then at $z_1$, we have $\varphi=\varphi_{\max}$, $D \g =0$ and $D^2 \g \leq 0$. Thus, at the point $z_1$,
	\eq{
		(f_{\min})^{-\frac{1}{n-k}} \geq \g_{\max}^{q} F^{-1} \geq 2\g_{\max}^{q}  \( \g_{\max}- \g_{\max}^{-1}\)^{-1} = \xi_q (\g_{\max}).
	}
        Similarly, at a minimum point $z_2\in \mathbb S^n$ of $\g$, we have
	\eq{
			(f_{\max})^{-\frac{1}{n-k}} \leq \g_{\min}^{q} F^{-1} \leq 2\g_{\min}^{q}  \( \g_{\min}- \g_{\min}^{-1}\)^{-1} = \xi_q (\g_{\min}).
	}
        \begin{enumerate}[(1)]
            \item If $q<1$, then $\xi_q (t)$ is strictly decreasing with $\xi_q (1^+) = +\infty $ and $\xi_q (+\infty) =0$. Thus, we have
        \eq{
		\g_{\max} \geq \xi_q^{-1} ( 	(f_{\min})^{-\frac{1}{n-k}} ), \quad \g_{\min} \leq \xi_q^{-1} ((f_{\max})^{-\frac{1}{n-k}}) .
	}
           \item If $q=1$, then $\xi_q(t)$ is strictly decreasing with $\xi_q (1^+) = +\infty $ and $\xi_q (+\infty) =2$. Since we assume $0<f<2^{k-n}$ in this case, it holds that
           \eq{
           \g_{\max} \geq \xi_q^{-1} ( (f_{\min})^{-\frac{1}{n-k}} ), \quad \g_{\min} \leq \xi_q^{-1} ((f_{\max})^{-\frac{1}{n-k}}) .
           }
        \item If $q>1$, then $\xi_q(t)$ is strictly decreasing on $(1,\sqrt{\frac{q+1}{q-1}}]$, and it is strictly increasing on $[\sqrt{\frac{q+1}{q-1}},\infty)$. Moreover, $\xi_q(1^+)=\infty$, $\xi_q(\sqrt{\frac{q+1}{q-1}})=\frac{(q+1)^{\frac{q+1}{2}}}{(q-1)^{\frac{q-1}{2}}}$ and $\xi_q(+\infty)=\infty$. Denote by $\xi_{q,-}^{-1}$ and $\xi_{q,+}^{-1}$ the inverse function of $\xi_q$ on the interval $(1,\sqrt{\frac{q+1}{q-1}}]$ and $[\sqrt{\frac{q+1}{q-1}},\infty)$ respectively. Since we assume that $0< f<\(\frac{(q+1)^\frac{q+1}{2}}{(q-1)^\frac{q-1}{2}}\)^{k-n}$ in this case, it holds that
        \begin{itemize}
            \item If $\varphi_{\max}\leq \sqrt\frac{q+1}{q-1}$, then 
            \eq{
            \varphi_{\max}\geq \xi_{q,-}^{-1}((f_{\min})^{-\frac{1}{n-k}}),\quad \varphi_{\min}\leq \xi_{q,-}^{-1}((f_{\max})^{-\frac{1}{n-k}});
            }
            \item If $\varphi_{\min}\geq \sqrt\frac{q+1}{q-1}$, then 
            \eq{
            \sqrt\frac{q+1}{q-1} \leq \varphi_{\min}\leq \varphi_{\max}\leq \xi_{q,+}^{-1}((f_{\min})^{-\frac{1}{n-k}});
            }
            \item If $\varphi_{\min} \leq \sqrt\frac{q+1}{q-1}\leq \varphi_{\max}$, then 
            \eq{
            \varphi_{\min}\leq \xi_{q,-}^{-1}((f_{\max})^{-\frac{1}{n-k}}),\quad \sqrt\frac{q+1}{q-1}\leq\varphi_{\max}\leq \xi_{q,+}^{-1}((f_{\min})^{-\frac{1}{n-k}}).
            }
        \end{itemize}
        \end{enumerate}
     
     Then by \eqref{eq-phi max < phi min}, there exists a constant $C>0$ such that 
	\eq{
		1+\frac{1}{C} \leq \g \leq C.
	}
	We complete the proof of Lemma \ref{lem-C0-est}.
\end{proof}

Combining Lemma \ref{lem-C0-est} with the estimate \eqref{eq-Dphi<phi}, we obtain the following a priori $C^1$-estimates.
\begin{lemma}[$C^1$-estimate]\label{lem-C1-est}
	We have 
	\begin{equation}\label{eq-C1}
		|D \g| \leq C,
	\end{equation}
	where $C>0$ is a constant depending only on $n$, $k$, $p$, $f_{\min}$ and $\|f \|_{C^0 }$.
\end{lemma}

\begin{lemma}[$C^2$-estimate]\label{lem-phi C2}
	We have
	\begin{equation}\label{eq-est C2}
	   \|\g \|_{C^2} \leq C,
	\end{equation}
	where $C>0$ is a constant depending only on $n$, $k$, $p$, $f_{\min}$ and $ \|f \|_{C^2}$.
\end{lemma}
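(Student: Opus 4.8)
Here is the plan. The goal is to reduce \eqref{eq-est C2} to an upper bound for the largest eigenvalue of $A:=A[\g]$, and then obtain that bound by a maximum principle. By Lemmas~\ref{lem-C0-est} and~\ref{lem-C1-est} we already control $\g$ and $D\g$, hence also $\psi$; since \eqref{s5:rel-A-phi-psi} gives $\g_{ij}=A_{ij}-(\g-\psi)\sigma_{ij}$, and since $A$ is positive semidefinite (lying in $\G_{n-k}\cap\overline{\G}_{n}$ when $F=p_{n-k}^{1/(n-k)}$ and in $\G_{n}$ when $F=(p_{n}/p_{k})^{1/(n-k)}$), the $C^{2}$ bound follows once we show $\lambda_{\max}(A)\le C$ on $\bbS^{n}$.

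To this end I would look at the maximum over $\bbS^{n}$ of $W:=\log\lambda_{\max}(A)$, attained at some $z_{0}$. Choosing a local orthonormal frame, parallel at $z_{0}$, in which $A$ is diagonal at $z_{0}$ with $A_{11}(z_{0})=\lambda_{\max}(A)(z_{0})$, the function $\widetilde W:=\log A_{11}$ still attains a local maximum at $z_{0}$ (because $A_{11}\le\lambda_{\max}(A)$ pointwise), so at $z_{0}$ we get $A_{11,i}=0$ for every $i$ and $\sum_{i}F^{ii}\widetilde W_{ii}\le0$, where $F^{ii}=\partial F/\partial A_{ii}$ evaluated at $A$, with $F^{ii}>0$ and $\sum_{i}F^{ii}\ge1$. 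Since $A_{11,i}(z_{0})=0$, at $z_{0}$ we simply have $\sum_{i}F^{ii}\widetilde W_{ii}=A_{11}^{-1}\sum_{i}F^{ii}A_{11,ii}$.

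The heart of the argument is a lower bound for $\sum_{i}F^{ii}A_{11,ii}$ at $z_{0}$. Differentiating \eqref{eq-Lp-Weingarten-eq-hyperbolic by q} once gives $\sum_{i}F^{ii}A_{ii,l}=(\g^{q}f^{1/(n-k)})_{l}$; differentiating twice along $e_{1}$ and using the concavity inequality $F^{ij,rs}\le0$ gives $\sum_{i}F^{ii}A_{ii,11}\ge(\g^{q}f^{1/(n-k)})_{11}$. Commuting derivatives by the identity \eqref{s5:Aiiaa-Aaaii} of Lemma~\ref{s5:lem-not codazzi}, and using $A_{11,l}(z_{0})=0$, one gets at $z_{0}$
\begin{align*}
\sum_{i}F^{ii}A_{11,ii}=&\sum_{i}F^{ii}A_{ii,11}-\sum_{l}\frac{\g_{l}}{\g}(\g^{q}f^{1/(n-k)})_{l}+\frac{1}{\g}\sum_{i}F^{ii}(A_{11}^{2}-A_{ii}^{2})\\
&+\frac{\psi}{\g}\sum_{i}F^{ii}(A_{11}-A_{ii})-\frac{2}{\g^{2}}\sum_{i}F^{ii}(A_{11}\g_{1}^{2}-A_{ii}\g_{i}^{2}).
\end{align*}
Here the decisive positive term is $\tfrac{1}{\g}\sum_{i}F^{ii}(A_{11}^{2}-A_{ii}^{2})\ge\tfrac{A_{11}^{2}}{\g}\sum_{i}F^{ii}-\tfrac{A_{11}}{\g}F$, where I used $0\le A_{ii}\le A_{11}$; the term $\tfrac{\psi}{\g}\sum_{i}F^{ii}(A_{11}-A_{ii})$ is nonnegative since $\psi>0$; the last sum is $\ge-\tfrac{2\g_{1}^{2}}{\g^{2}}A_{11}\sum_{i}F^{ii}\ge-2A_{11}\sum_{i}F^{ii}$ by \eqref{eq-Dphi<phi}; and, after substituting $\g_{11}=A_{11}-(\g-\psi)$ from \eqref{s5:rel-A-phi-psi}, the first two terms on the right are bounded below by $-CA_{11}$ in terms of $n,k,p$, the $C^{1}$ data and $\|f\|_{C^{2}}$, whatever the sign of $q$. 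Dividing by $A_{11}$ and using $\sum_{i}F^{ii}\ge1$ and $\g\le C$, we arrive at
\[
0\ \ge\ \sum_{i}F^{ii}\widetilde W_{ii}\ \ge\ \Big(\frac{A_{11}}{\g}-C\Big)\sum_{i}F^{ii}-C,
\]
which is impossible once $A_{11}(z_{0})>C'$ for a suitable $C'$. Hence $\lambda_{\max}(A[\g])\le C'$ on $\bbS^{n}$, and \eqref{eq-est C2} follows as in the first paragraph.

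The computation is lengthy but essentially routine once the structure formulas of Lemma~\ref{s5:lem-not codazzi} are in hand; the one genuinely structural point is that the curvature term $\tfrac{A_{11}^{2}}{\g}\sum_{i}F^{ii}$ produced by the commutator \eqref{s5:Aiiaa-Aaaii} dominates all the remaining terms. This is exactly where the sign hypothesis $A[\g]\ge0$ enters (to guarantee $A_{ii}^{2}\le A_{11}A_{ii}$ and the nonnegativity of the $\psi$-term), and it is the reason the a priori estimates are set up on $\G_{n-k}\cap\overline{\G}_{n}$ (resp.\ $\G_{n}$) rather than on the larger cone $\G_{n-k}$; on $\G_{n-k}$ one would instead have to absorb the contribution of the negative eigenvalues using that $p_{n-k}(A[\g])$ stays bounded, in the spirit of Guan--Ma~\cite{GM03}.
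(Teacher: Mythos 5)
Your argument is correct, but it is not the route the paper takes, so let me compare the two. The paper reduces everything to an upper bound on $\Delta\g$: since $A[\g]\ge 0$ and $\g,D\g$ are already controlled, one has $-C\le D^2\g\le \Delta\g+C$, and at a maximum point of $\Delta\g$ the inequality $0\ge F^{ii}(\Delta\g)_{ii}$ is expanded via the Ricci identity and the concavity of $F$; the decisive good term there is $\tfrac12\Delta\tfrac{|D\g|^2}{\g}\ge \tfrac{|D^2\g|^2}{\g}-C\Delta\g-C\ge\tfrac1C(\Delta\g)^2-C\Delta\g-C$, which yields $0\ge\tfrac1C(\Delta\g)^2-C\Delta\g-C$ and hence $\Delta\g\le C$. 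You instead run the maximum principle on the largest eigenvalue $A_{11}$ of $A[\g]$, and your good quadratic term $\tfrac{1}{\g}\sum_iF^{ii}(A_{11}^2-A_{ii}^2)$ is produced by the commutator identity \eqref{s5:Aiiaa-Aaaii} of Lemma \ref{s5:lem-not codazzi}. Both quadratic terms ultimately come from the same structural feature of $A[\g]$ (the $|D\g|^2/\g$ term in $\psi$), and your estimates of the remaining terms — concavity to pass from $F^{ii}A_{ii,11}$ to $h_{11}$, the substitution $\g_{11}=A_{11}-(\g-\psi)$ to control $h_{11}$ by $C(1+A_{11})$, positivity of the $\psi$-term from $A\ge0$, and $|D\g|/\g<1$ for the last term — are all valid; dividing by $A_{11}$ and using $\sum_iF^{ii}\ge1$ closes the argument. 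Your approach is the more standard Pogorelov-type largest-eigenvalue estimate and gives slightly sharper structural information (a direct bound on $\lambda_{\max}(A[\g])$, not just on its trace); the paper's trace argument is a bit more elementary in that it avoids the eigenvalue-multiplicity issue (which you correctly sidestep by working with the component $A_{11}$ in a frame parallel at $z_0$) and does not need the commutator formula \eqref{s5:Aiiaa-Aaaii} at this stage. Your closing remark about why the estimates are set on $\G_{n-k}\cap\overline{\G}_n$ rather than $\G_{n-k}$ is apt: both proofs use $A[\g]\ge0$ essentially (the paper to get $D^2\g\le\Delta\g+C$, you to get $A_{ii}^2\le A_{11}A_{ii}$ and the sign of the $\psi$-term).
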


\begin{proof}
	It follows from $A[\g] \geq 0$, the $C^0$ and $C^1$ estimates of $\g$ in \eqref{eq-C0} and \eqref{eq-C1} that $D^2 \g \geq -C$. Note again that $A[\g]$ is positive semi-definite. Then 
	\eq{
		D^2 \g \leq A[\g] +C \leq \tr A[\g] +C \leq \Delta \g +C.
	}
	Consequently, we have
	\begin{equation}\label{eq-D^2 phi}
		- C \leq D^2 \g \leq \Delta \g+C.
	\end{equation}
	    Hence, in order to prove $ \|\g \|_{C^2} \leq C$, it suffices to show that $\Delta \g \leq C$.  
	
	Assume that $\Delta \g$ attains its maximum at $z_0 \in \mathbb{S}^n$. Without loss of generality, we assume that $\Delta \g (z_0) \geq 1$. Take a normal coordinate system on $\mathbb{S}^n$ around $z_0$ such that $\sigma_{ij} = \delta_{ij}$ and that $D^2 \g$ is diagonal at $z_0$. Consequently, $F^{ij}$ and $A_{ij} [\g]$ are both diagonal at $z_0$. Then, at $z_0$ we have
	\eq{
		(\Delta \g)_i = 0, \quad (\Delta \g)_{ij} \leq 0.
	}
	Since $F$ is increasing and homogeneous of degree one, using the Ricci identity, at $z_0$ we have
	\eq{\label{eq-0 geq F-ii Delta phi-ii} 
	0 \geq &~F^{ii} (\Delta \g)_{ii}  \\
	= &~ F^{ii} \( \Delta (\g_{ii}) +2 \Delta \g - 2n \g_{ii} \)  \\
	= &~ F^{ii} \Delta (A_{ii} [\g]) + \sum F^{ii} \Delta \( \frac{1}{2} \frac{|D \g|^2}{\g} - \frac{1}{2} \( \g-\frac{1}{\g} \)  \)  \\
    &~\quad + 2\sum F^{ii} \Delta \g -2n F^{ii}  \g_{ii} \\
	=&~ F^{ii} \Delta (A_{ii} [\g]) + \sum F^{ii} \Delta \( \frac{1}{2} \frac{|D \g|^2}{\g} - \frac{1}{2} \( \g-\frac{1}{\g} \)  \)+2\sum F^{ii} \Delta \g  \\
	&~ -2n F^{ii} A_{ii} [\g] - 2n \sum F^{ii} \( \frac{1}{2} \frac{|D \g|^2}{\g} - \frac{1}{2} \(\g -\frac{1}{\g}\) \)  \\
	\geq&~ F^{ii} \Delta (A_{ii} [\g]) + \frac{1}{2}\sum F^{ii} \Delta \frac{|D \g|^2}{\g}-2n \g^q f^{\frac{1}{n-k}} -
	\sum F^{ii} \( C \Delta \g + C \),
	}	where $C$ is a constant depending only on $n$, $k$, $q$, $ \|f \|_{C^0}$ and $f_{\min}$.
	
	For the first term on the RHS of \eqref{eq-0 geq F-ii Delta phi-ii}, the concavity of $F$ implies
	\eq{
		\Delta F = F^{ij} \Delta A_{ij} [\g] +{F}^{ij, kl} \metric{D A_{ij} [\g]}{D A_{kl} [\g]} \leq F^{ii} \Delta A_{ii}[\g].
	}
	On the other hand, equation \eqref{eq-Lp-Weingarten-eq-hyperbolic by q} yields
	\eq{
		\Delta F = \Delta (\g^q f^{\frac{1}{n-k}}) \geq -C \Delta \g -C,
	}
	where $C$ is a positive constant depending only on $n$, $k$, $q$, $ \|f \|_{C^2}$. Thus, we obtain
	\begin{equation}\label{eq-est-F Delta-A phi}
		F^{ii} \Delta (A_{ii} [\g])  \geq -C \Delta \g- C.
	\end{equation}
	
	Now we estimate $\frac{1}{2} \Delta \frac{|D \g|^2}{\g}$. A direct calculation at $z_0$ yields
	\eq{\label{eq-est-Delta-Dphi}
		\frac{1}{2} \Delta \frac{|D \g|^2}{\g}= &~\frac{|D^2 \g|^2}{\g} + \frac{\g_i (\Delta \g)_i}{\g} + (n-1) \frac{|D\g|^2}{\g} \\
      &~- 2 \frac{\g_{ij} \g_i \g_j}{\g^2} -\frac{1}{2}\frac{|D \g|^2}{\g^2} \Delta \g  +\frac{|D \g|^4}{\g^3}, \\
		\geq&~ \frac{1}{C} (\Delta \g)^2 -C \Delta \g - C,  
	}
	where we used $|D^2\g|^2 \geq (\Delta \g)^2/n$, $(\Delta \g)_i (z_0) =0$ and \eqref{eq-D^2 phi}. Substituting \eqref{eq-est-F Delta-A phi} and \eqref{eq-est-Delta-Dphi} into \eqref{eq-0 geq F-ii Delta phi-ii}, and noting that $\sum F^{ii} \geq 1$, we  have
	\eq{
		0 \geq \frac{1}{C} (\Delta \g)^2 - C \Delta \g -C.
	}
	Thus we have $\Delta \g \leq C$ and then $ \|\g \|_{C^2} \leq C$. We complete the proof of Lemma \ref{lem-phi C2}.
\end{proof}

\begin{lemma}\label{lem-elliptic-(p_n/p_{n-k})^{1/k}}
	Let $F=(p_n/p_k)^{1/(n-k)}$. Then
	\begin{equation} \label{eq-est A phi >1/c}
			A[\g] \geq C,
	\end{equation}
	where $C>0$ is a constant depending only on $n$, $k$, $q$, $f_{\min}$ and $ \|f \|_{C^2}$. Therefore, the associated operator $F^{kl}D_kD_l$ is uniformly elliptic at $\varphi$. 
\end{lemma}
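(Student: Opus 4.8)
Estimate \eqref{eq-est A phi >1/c} is a genuine lower bound for the smallest eigenvalue of $A[\g]$, which for a general operator $F$ cannot be extracted from a $C^2$ bound alone. The key point is structural: for $F=(p_n/p_k)^{1/(n-k)}$ the equation \eqref{eq-Lp-Weingarten-eq-hyperbolic by q} controls the quotient $p_n(A[\g])/p_k(A[\g])=\det A[\g]/p_k(A[\g])$, and since $p_k(A[\g])$ is automatically bounded above once $A[\g]$ is bounded above, this forces $\det A[\g]$ — hence every eigenvalue of $A[\g]$ — to stay away from $0$. This is exactly the feature that fails for the Christoffel--Minkowski operator $F=p_{n-k}^{1/(n-k)}$, where the full rank theorem must be used instead. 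So the whole content is elementary once the a priori $C^2$ bound is in hand, and the only thing to get right is an algebraic comparison for $p_n/p_k$.

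\emph{Step 1 (two-sided control of $F(A[\g])$; upper bound for the eigenvalues).} By \eqref{s5:rel-A-phi-psi}, $A[\g]=D^2\g+(\g-\psi)\sigma$ with $\psi$ as in \eqref{s5:psi}; hence Lemmas~\ref{lem-C0-est}, \ref{lem-C1-est} and \ref{lem-phi C2} yield $\|A[\g]\|_{C^0}\le\Lambda$ for some $\Lambda=\Lambda(n,k,q,f_{\min},\|f\|_{C^2})$, so all eigenvalues of $A[\g]$ lie in $(0,\Lambda]$. On the other hand, $1+1/C\le\g\le C$ by \eqref{eq-C0}, $q=\frac{n+p}{n-k}-1\ge-1$, and $f\ge f_{\min}>0$, so the right-hand side of \eqref{eq-Lp-Weingarten-eq-hyperbolic by q} obeys $F(A[\g])=\g^q f^{1/(n-k)}\ge c_0$ for a positive $c_0=c_0(n,k,q,f_{\min},\|f\|_{C^0})$.

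\emph{Step 2 (from $\det A[\g]\gtrsim1$ to the eigenvalue bound).} Order the eigenvalues of $A[\g]$ as $\lambda_1\ge\cdots\ge\lambda_n>0$. Positivity of all summands gives $S_k(\lambda)\ge\lambda_1\cdots\lambda_k$, while by Step~1 the complementary product satisfies $\lambda_{k+1}\cdots\lambda_n\le\Lambda^{\,n-k-1}\lambda_n$; therefore
\[
\frac{p_n(A[\g])}{p_k(A[\g])}=\binom nk\,\frac{\lambda_1\cdots\lambda_n}{S_k(\lambda)}\le\binom nk\,\lambda_{k+1}\cdots\lambda_n\le\binom nk\,\Lambda^{\,n-k-1}\,\lambda_n .
\]
Since \eqref{eq-Lp-Weingarten-eq-hyperbolic by q} gives $p_n(A[\g])/p_k(A[\g])=F(A[\g])^{\,n-k}\ge c_0^{\,n-k}$, we conclude $\lambda_n\ge c_0^{\,n-k}\binom{n}{k}^{-1}\Lambda^{-(n-k-1)}=:1/C$, which is \eqref{eq-est A phi >1/c}, with $C$ depending only on $n,k,q,f_{\min}$ and $\|f\|_{C^2}$.

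\emph{Step 3 (uniform ellipticity).} Combining \eqref{eq-est A phi >1/c} with Step~1, the eigenvalues of $A[\g]$ remain in the fixed compact set $[1/C,\Lambda]^n\subset\Gamma_n$. As $F=(p_n/p_k)^{1/(n-k)}$ is smooth, strictly increasing and $1$-homogeneous on $\Gamma_n$, its first derivatives $F^{kl}$ form a continuous, positive definite matrix there; on the compact set its eigenvalues are thus bounded above and below by positive constants, so $F^{kl}D_kD_l$ is uniformly elliptic at $\g$. Because every constant above ultimately traces back to the $C^0$–$C^2$ estimates, which depend only on $n,k,p$ and norms of $f$ and not on the particular solution, the ellipticity constants are uniform — which is the form in which this lemma is invoked in the degree-theoretic argument. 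As indicated above, no step here is a real obstacle; the one place demanding a little care is the comparison in Step~2, i.e.\ checking $p_n/p_k\lesssim\lambda_{\min}$ with a constant depending only on $n,k$ and the a priori bound $\Lambda$.
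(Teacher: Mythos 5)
Your proof is correct and follows essentially the same route as the paper: both use the equation together with the a priori upper bound on $A[\varphi]$ to force the smallest eigenvalue away from zero. The only (immaterial) difference is that the paper first invokes the Newton--MacLaurin inequality $p_n^{1/n}\ge (p_n/p_k)^{1/(n-k)}$ to get $\det A[\varphi]\gtrsim 1$ and then divides by the bounded product of the remaining eigenvalues, whereas you bound $p_n/p_k\le\binom{n}{k}\Lambda^{n-k-1}\lambda_n$ directly via $S_k(\lambda)\ge\lambda_1\cdots\lambda_k$.
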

\begin{proof}
    It follows from the Newton-MacLaurin inequality and equation \eqref{eq-Lp-Weingarten-eq-hyperbolic by q} that
	\eq{
	  p_n^{\frac{1}{n}} (A [\g]) \geq \(\frac{p_n(A[\g]) }{p_k(A[\g])}\)^{\frac{1}{n-k}}  =\g^q f^{\frac{1}{n-k}} \geq \frac{1}{C}.
	}
	On the other hand, the uniform upper bound of $A [\g]$ follows from \eqref{eq-est C2}. So we have $A [\g] \geq C$. Thus, the operator $F^{kl}D_kD_l$ is uniformly elliptic at $\varphi$.
\end{proof}

\begin{lemma}\label{lem-elliptic-(p_k)^{1/k}}
    Let $F=p_{n-k}^{1/(n-k)}$. Then the associated operator $F^{kl}D_kD_l$ is uniformly elliptic at $\varphi$. 
\end{lemma}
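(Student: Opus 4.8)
The plan is to establish two-sided bounds $c_3\le F^{ii}\le C$ on the eigenvalues of the matrix $(F^{ij})=(\partial F/\partial A_{ij})$ evaluated at $A:=A[\varphi]$, with constants depending only on $n$, $k$, $q$, $f_{\min}$ and $\|f\|_{C^2}$; this is precisely uniform ellipticity of $F^{kl}D_kD_l$ at $\varphi$. Working at an arbitrary point in coordinates where $A$ is diagonal with eigenvalues $\lambda=(\lambda_1,\dots,\lambda_n)$, the matrix $(F^{ij})$ is diagonal as well, and for $F=p_{n-k}^{1/(n-k)}$ one has
\[
F^{ii}=\frac{1}{n-k}\binom{n}{n-k}^{-1}\,p_{n-k}(A)^{\frac{1}{n-k}-1}\,S_{n-k-1}(\lambda|i).
\]
Since equation \eqref{eq-Lp-Weingarten-eq-hyperbolic by q} says $F(A)=\varphi^q f^{1/(n-k)}$, we get $p_{n-k}(A)=\varphi^{q(n-k)}f$, which by Lemma \ref{lem-C0-est}, Lemma \ref{lem-phi C2} and $f_{\min}>0$ lies between two positive constants; hence the factor $p_{n-k}(A)^{\frac{1}{n-k}-1}$ does too, and the task reduces to bounding $\min_i S_{n-k-1}(\lambda|i)$ from above and below.

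The upper bound is immediate: Lemma \ref{lem-phi C2} together with $A\ge 0$ gives $0\le\lambda_i\le C$, hence each $S_{n-k-1}(\lambda|i)\le C$. For the lower bound the plan is to argue by contradiction plus compactness. First record that $S_{n-k}(\lambda)=\binom{n}{n-k}p_{n-k}(A)\ge c_1>0$. Now suppose $S_{n-k-1}(\lambda|i)=0$ for some $i$. As $\lambda_j\ge 0$ for every $j$, this is a sum of products of $n-k-1$ distinct entries drawn from $\{\lambda_j:j\ne i\}$, so it vanishes only when at most $n-k-2$ of those entries are nonzero; but then at most $n-k-1$ of all the $\lambda_j$ are nonzero, which forces $S_{n-k}(\lambda)=0$, contradicting $S_{n-k}(\lambda)\ge c_1$. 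Therefore $\mu\mapsto\min_i S_{n-k-1}(\mu|i)$ is continuous and strictly positive on the compact set $\{\mu\in\bbR^n:\ 0\le\mu_j\le C,\ S_{n-k}(\mu)\ge c_1\}$, hence bounded below by some $c_2>0$; assembling the pieces gives $c_3\le F^{ii}\le C$.

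I do not expect a genuine obstacle here: the statement follows from the a priori estimates already in hand plus the elementary structure of the symmetric polynomials. The one point I would be careful to stress is that, in contrast with Lemma \ref{lem-elliptic-(p_n/p_{n-k})^{1/k}}, one must \emph{not} attempt to bound $A[\varphi]$ away from zero --- for the Christoffel-Minkowski type operator $F=p_{n-k}^{1/(n-k)}$ the h-convex solution may have some shifted principal curvatures tending to $1$, so individual eigenvalues $\lambda_i$ can degenerate to $0$. Accordingly the argument above uses only $\lambda_i\ge 0$, the upper bound $\lambda_i\le C$, and the uniform positive lower bound on $S_{n-k}(\lambda)$ coming from the equation, all of which are stable under such degeneration; the partial rank loss intrinsic to this setting therefore does not cost uniform ellipticity of the linearized operator.
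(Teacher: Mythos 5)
Your proposal is correct and follows essentially the same route as the paper: the paper notes that the lower bound $F(A[\varphi])=\varphi^q f^{1/(n-k)}\geq C>0$ from the $C^0$-estimate, combined with the upper bound on $A[\varphi]$, places the eigenvalues in a compact subset of the G{\aa}rding cone $\Gamma_{n-k}$, where uniform ellipticity is standard. You simply unpack that standard step explicitly (the formula for $F^{ii}$, the upper bound from $0\le\lambda_i\le C$, and the contradiction/compactness argument showing $\min_i S_{n-k-1}(\lambda|i)$ cannot vanish when $S_{n-k}(\lambda)\ge c_1>0$), which is a correct and complete elaboration of the same idea.
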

\begin{proof}
     Note that $F(A[\g])=p_{n-k}^{1/(n-k)}(A[\g])=\g^q f^{\frac{1}{n-k} }\geq C>0$ follows from the $C^0$-estimate \eqref{eq-C0} of $\g$ and $f_{\min}$. 
     This together with the upper bound of $A[\varphi]$, implies that the eigenvalues of $A[\varphi]$ lie in a compact subset of the cone $\Gamma_k$. Thus, the operator $F^{kl}D_kD_l$ is uniformly elliptic at $\varphi$. 
\end{proof}

By Lemma \ref{lem-elliptic-(p_n/p_{n-k})^{1/k}} and Lemma \ref{lem-elliptic-(p_k)^{1/k}} and the a priori $C^2$ estimates in \eqref{eq-est C2}, the higher order derivative estimates of $\g$ follows from the H\"older estimate of Krylov-Evans \cite{Kry82} and the Schauder theory \cite{GT01}.
\begin{theorem}\label{thm-regularity-estimate}
	Let $F=(p_n/p_k)^{1/(n-k)}$ or $F=p_{n-k}^{1/(n-k)}$. For any integer $l \geq 2$, there exists a constant $C>0$ depending only on $n$, $k$, $p$, $f_{\min}$ and $ \|f \|_{C^{l, \alpha}}$ such that 
	\begin{equation}\label{eq-higher order derivative}
		 \|\g \|_{C^{l+2, \alpha}} \leq C.
	\end{equation}
\end{theorem}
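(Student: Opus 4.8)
The statement is the standard higher-order bootstrap for uniformly elliptic, concave fully nonlinear equations, and the plan is to combine the a priori $C^2$ bound \eqref{eq-est C2} with the uniform ellipticity from Lemmas~\ref{lem-elliptic-(p_n/p_{n-k})^{1/k}} and~\ref{lem-elliptic-(p_k)^{1/k}}, apply the Evans--Krylov theorem once to upgrade $C^2$ to $C^{2,\alpha}$, and then iterate Schauder estimates on the differentiated equation.

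First I would record that \eqref{eq-Lp-Weingarten-eq-hyperbolic by q} has the form $\Phi(D^2\varphi, D\varphi, \varphi, z)=0$ with $\Phi$ concave in the Hessian variable: indeed $A[\varphi]=D^2\varphi+(\varphi-\psi)\sigma$ by \eqref{s5:rel-A-phi-psi}, the lower-order part $(\varphi-\psi)\sigma$ depends smoothly on $(\varphi, D\varphi)$ through \eqref{s5:psi}, and $F=p_{n-k}^{1/(n-k)}$ (resp.\ $F=(p_n/p_k)^{1/(n-k)}$) is concave on $\Gamma_{n-k}$ (resp.\ $\Gamma_n$), as recalled in Section~2. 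By Lemma~\ref{lem-elliptic-(p_k)^{1/k}} (resp.\ Lemma~\ref{lem-elliptic-(p_n/p_{n-k})^{1/k}}) the eigenvalues of $A[\varphi]$ lie in a fixed compact subset of the relevant open cone --- the $C^0$ lower bound on $F(A[\varphi])=\varphi^q f^{1/(n-k)}$ keeps them away from the boundary of the cone, while \eqref{eq-est C2} keeps them bounded --- so $\Phi$ is uniformly elliptic at $\varphi$, with ellipticity constants depending only on $n,k,p,f_{\min},\|f\|_{C^0}$. Moreover the inhomogeneous term $\varphi^q f^{1/(n-k)}$ is bounded in $C^2(\bbS^n)$ in terms of $\|f\|_{C^2}$ and the $C^1$ bound \eqref{eq-C1}. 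The Evans--Krylov estimate \cite{Kry82} (applied globally, since $\bbS^n$ is closed) then yields $\|\varphi\|_{C^{2,\alpha}(\bbS^n)}\le C$ for some $\alpha\in(0,1)$ and some $C$ depending only on $n,k,p,f_{\min},\|f\|_{C^2}$.

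Next, for a fixed vector field $e$ on $\bbS^n$ I would differentiate \eqref{eq-Lp-Weingarten-eq-hyperbolic by q} in the direction $e$: the function $w:=D_e\varphi$ then solves a linear second-order equation $F^{ij}(A[\varphi])\,D_iD_j w + b^i D_i w + c\,w = D_e(\varphi^q f^{1/(n-k)})$, whose leading coefficients $F^{ij}(A[\varphi])$ and lower-order coefficients $b^i,c$ are controlled in $C^{\alpha}$ by the $C^{2,\alpha}$ bound just obtained and which is uniformly elliptic by the same two lemmas. Since its right-hand side lies in $C^{1,\alpha}$ once $\|f\|_{C^{3,\alpha}}$ is fixed, the (global) Schauder estimate \cite{GT01} gives $\|w\|_{C^{2,\alpha}}\le C$, that is $\|\varphi\|_{C^{3,\alpha}}\le C$. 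Iterating --- at the $m$-th step differentiate the equation $m$ times, so that the top-order unknown $D^m\varphi$ satisfies a linear uniformly elliptic equation whose coefficients are already bounded in $C^{m-1,\alpha}\subset C^{\alpha}$ and whose right-hand side is bounded in $C^{m-1,\alpha}$ once $\|f\|_{C^{m,\alpha}}$ is fixed --- and applying Schauder at each stage gives \eqref{eq-higher order derivative} by induction on $l$.

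The argument is routine given the preceding lemmas, and I do not expect a genuine obstacle; the only points needing a little attention are checking the Evans--Krylov hypotheses (concavity of $F$ together with the two ellipticity lemmas, and, for $F=p_{n-k}^{1/(n-k)}$, the fact that the lower bound $\varphi^q f^{1/(n-k)}\ge 1/C$ confines $\lambda(A[\varphi])$ to a compact subset of $\Gamma_{n-k}$) and bookkeeping the constant dependencies through the induction, which is harmless because the ellipticity constants are frozen after the $C^{2,\alpha}$ step.
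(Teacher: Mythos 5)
Your proposal is correct and follows essentially the same route as the paper, which justifies Theorem \ref{thm-regularity-estimate} in one sentence by combining the uniform ellipticity from Lemmas \ref{lem-elliptic-(p_n/p_{n-k})^{1/k}} and \ref{lem-elliptic-(p_k)^{1/k}} with the $C^2$ bound \eqref{eq-est C2}, the Krylov--Evans H\"older estimate, and Schauder theory. You have merely spelled out the standard bootstrap in more detail than the paper does.
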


\section{A deformation lemma}
In this section, we establish the key deformation lemma (Lemma \ref{deformation-lemma}), which will be used to prove the full rank theorem (Theorem \ref{thm-full-rank}) to ensure that the strict h-convexity is preserved along a homotopic path when we prove the mains theorems using the degree method.

\begin{lemma}[Deformation lemma]\label{deformation-lemma}
Let $O \subset \mathbb{S}^n$ be an open subset. Suppose $\varphi \in C^4(O)$ is a solution of 
\eq{\label{eq-in deformation lemma}
S_k(A[\varphi (z)]) = \varphi^{n+p-k} f(z)
}
in $O$, and that the matrix $A[\varphi]$ is positive semi-definite. Suppose there is a positive constant $C_0>0$ such that for a fixed integer $k\leq \ell \leq n-1$, $S_{\ell} (A[\varphi(z)]) \geq C_0$ for all $z \in O$. Let $\phi(z) = S_{\ell+1} (A[\varphi(z)] )$ and let $\tau(z)$ be the largest eigenvalue of the matrix
\eq{\label{deformation-lemma-cond on f}
 &-D^2(f^{-\frac{1}{k} } ) -\frac{2(k-p-n)}{k} d(f^{-\frac{1}{k} }) d\log \varphi \\
 &+ \metric{D f^{-\frac{1}{k} }}{D \log \varphi} \sigma+  \frac{(k-p-n)(n+p)}{k^2}f^{ -\frac{1}{k} } (d \log \varphi)^2 \\
 &- \(\frac{n+p}{2k} \frac{|D \varphi|^2}{\varphi^2}  + \frac{n+p}{2k} + \frac{2k-p-n}{2k} \frac{1}{\varphi^2} \)f^{ -\frac{1}{k} }  \sigma.
} 
Then there are constants $C_1$, $C_2$ depending only on $||\varphi||_{C^3}$, $||f||_{C^2}$, $n$, $k$, $p$ and $C_0$, such that the differential inequality
\eq{
\sum_{\a, \b=1}^n S_k^{\a\b} \phi_{\a\b}(z) \leq  k (n-\ell)\varphi^{n+p-k} f^{\frac{k+1}{k} } S_{\ell} (A[\varphi]) \tau(z) + C_1 |D \phi (z)| +C_2 \phi (z)
}
holds in $O$.
\end{lemma}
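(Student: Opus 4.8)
The goal is to differentiate $\phi = S_{\ell+1}(A[\varphi])$ twice, contract against the positive-definite operator $S_k^{\alpha\beta}(A[\varphi])$, and use the PDE together with the Codazzi-type identities of Lemma \ref{s5:lem-not codazzi} to reorganize the result into the claimed inequality. First I would work at a fixed point $z_0$ in a normal coordinate frame in which $\sigma_{ij}=\delta_{ij}$ and $A_{ij}:=A_{ij}[\varphi]$ is diagonal with eigenvalues $\lambda_1,\dots,\lambda_n$; then $S_k^{\alpha\beta}$ and $S_{\ell+1}^{\alpha\beta}$ are also diagonal. Writing $\phi_\alpha = S_{\ell+1}^{ij}A_{ij,\alpha}$ and
$$
\phi_{\alpha\beta} = S_{\ell+1}^{ij}A_{ij,\alpha\beta} + S_{\ell+1}^{ij,rs}A_{ij,\alpha}A_{rs,\beta},
$$
I would contract with $S_k^{\alpha\beta}$ and handle the two resulting sums separately. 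The second-order term $S_k^{\alpha\beta}S_{\ell+1}^{ij}A_{ij,\alpha\beta}$ is the main one: I would commute derivatives via \eqref{s5:Aiiaa-Aaaii} to replace $A_{ii,\alpha\alpha}$ by $A_{\alpha\alpha,ii}$ plus lower-order curvature terms, then differentiate the PDE $S_k(A[\varphi])=\varphi^{n+p-k}f$ twice in the $i$-direction (since $\ell\ge k$ one first needs the $S_\ell$ or rather the $S_k$-equation — here the hypothesis is stated for $S_k$) to express $S_k^{\alpha\beta}A_{\alpha\beta,ii}$ in terms of derivatives of $\varphi^{n+p-k}f$, picking up a crucial $-S_k^{\alpha\beta,\gamma\delta}A_{\alpha\beta,i}A_{\gamma\delta,i}$ term which, by Ivaki–Brendle–Guan type algebraic inequalities for $S_k$, has a favorable sign that can be discarded.

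Second I would control the genuinely second-derivative-of-$A$ contribution by exploiting that $f^{-1/k}\varphi^{(n+p-k)/k}$ appears when one writes the PDE in the normalized form $S_k(A)^{1/k} = (\varphi^{n+p-k}f)^{1/k}$; differentiating $(\varphi^{n+p-k}f)^{-1/k}$ twice is exactly what produces the Hessian matrix displayed in \eqref{deformation-lemma-cond on f}, once the $\varphi$-derivatives are expanded using $\psi_i = \lambda_i\varphi_i/\varphi$ from \eqref{s5:psi_i} and $|D\log\varphi|<1$ from \eqref{eq-Dphi<phi} to absorb the $1/\varphi^2$ terms. The coefficient $k(n-\ell)\varphi^{n+p-k}f^{(k+1)/k}S_\ell(A[\varphi])$ in front of $\tau(z)$ arises from the combinatorial identity $\sum_\alpha S_{\ell+1}^{ii}$-type contractions: more precisely from $S_k^{\alpha\beta}S_{\ell+1}^{\gamma\delta}$ being paired with the top eigenvalue of the matrix in \eqref{deformation-lemma-cond on f}, using that the relevant trace of $S_{\ell+1}^{ii}$ over the directions not equal to $\alpha$ telescopes to a multiple of $S_\ell$, and that $S_k \le \binom{n}{k}$-normalization relates $S_k^{ii}$-weighted sums back to the PDE right-hand side. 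The third-order gradient terms $S_k^{\alpha\beta}S_{\ell+1}^{ij,rs}A_{ij,\alpha}A_{rs,\beta}$ together with the lower-order terms coming from the commutator identities \eqref{s5:A-Codazzi}–\eqref{s5:Aiiaa-Aaaii} are all bounded by $C_1|D\phi|+C_2\phi$: here I would use $S_{\ell}(A[\varphi])\ge C_0$ to bound from below all Newton tensors $S_m^{ii}$ for $m\le\ell$ away from degeneracy (via Newton–MacLaurin and the cone membership), so that every term with at least one factor $A_{ij,\gamma}$ that is not already packaged into $D\phi$ can be Cauchy–Schwarz'd against $\phi$ using $|A_{ij,\gamma}|\le \|\varphi\|_{C^3}$ and the fact that $\phi = S_{\ell+1}(A[\varphi])$ is itself comparable to a product of eigenvalues, which vanishes to the right order when any block of eigenvalues degenerates.

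The main obstacle I expect is the sign analysis of the third-order term $-S_k^{\alpha\beta,\gamma\delta}A_{\alpha\beta,\cdot}A_{\gamma\delta,\cdot}$ coming from the second differentiation of the PDE, combined with correctly matching it against the concavity term $S_{\ell+1}^{ij,rs}A_{ij,\alpha}A_{rs,\beta}$ so that their sum can be dropped or absorbed into $C_1|D\phi|$; this is the step where the precise constant $k(n-\ell)$ and the exact shape of the matrix \eqref{deformation-lemma-cond on f} must be pinned down, and where one must be careful that the ``bad'' directions (those with small $\lambda_i$, where $\phi$ and its derivatives are small) are exactly the ones where the coefficient in front of $\tau$ also degenerates appropriately. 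A secondary technical point is bookkeeping the many curvature/commutator error terms from \eqref{s5:Aiiaa-Aaaii} — each contributes a term of the form $\frac1\varphi(\lambda_i^2-\lambda_\alpha^2)$, $\frac{\psi}\varphi(\lambda_i-\lambda_\alpha)$, or $\frac1{\varphi^2}(\lambda_i\varphi_i^2-\lambda_\alpha\varphi_\alpha^2)$ — and checking that after contracting with $S_k^{\alpha\beta}S_{\ell+1}^{ij}$ these either cancel by symmetry, contribute to the $\tau$-term through the $\sigma$-coefficient in \eqref{deformation-lemma-cond on f}, or are harmless lower-order terms bounded by $C_2\phi$.
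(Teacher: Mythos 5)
Your overall architecture matches the paper's: differentiate $\phi=S_{\ell+1}(A[\varphi])$ twice, contract with $S_k^{\a\b}$, commute derivatives via \eqref{s5:Aiiaa-Aaaii}, differentiate the PDE twice in the bad directions, and convert the resulting expression into the Hessian of $(\varphi^{n+p-k}f)^{-1/k}$. The paper implements this through a good/bad index splitting $G=\{1,\dots,\ell\}$, $B=\{\ell+1,\dots,n\}$ justified by $S_\ell(A)\geq C_0$, with the key facts $\lambda_i\sim 0$ and $\sum_{i\in B}A_{ii,\a}\sim 0$, followed by a term-by-term decomposition $I_1+\dots+I_5$. So far your plan is on track.

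However, there is a genuine gap in your treatment of the quadratic third-order terms, and it occurs exactly at the step you flag as ``the main obstacle.'' You claim that the term $-S_k^{\a\b,\gamma\delta}A_{\a\b,i}A_{\gamma\delta,i}$ arising from the second differentiation of the PDE ``has a favorable sign that can be discarded.'' It does not, and it cannot be discarded. Writing it out, it equals $-\sum_{\a\neq\b}S_{k-2}(A|\a\b)\bigl(A_{\a\a,i}A_{\b\b,i}-A_{\a\b,i}^2\bigr)$. The off-diagonal piece $+\sum_{\a\neq\b}S_{k-2}(A|\a\b)A_{\a\b,i}^2$ is \emph{nonnegative} and enters $\sum_\a F^{\a\a}\phi_{\a\a}$ with a positive coefficient $S_\ell(A|i)$; for an \emph{upper} bound it is a bad term and must be cancelled exactly against the negative terms produced by $S_{\ell+1}^{ij,rs}A_{ij,\a}A_{rs,\a}$, using both the Newton--MacLaurin type inequalities $S_{\ell-1}(G|\b)S_{k-1}(G|\a)\geq S_\ell(G)S_{k-2}(G|\a\b)$ and the near-Codazzi identities \eqref{s5:A-Codazzi}, \eqref{Aabi=Aiba}, \eqref{Aaia-Aaai sim 0} to swap the indices $i$ and $\a$ (this is the $I_4$, $I_5$ analysis in the paper). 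The diagonal cross piece $-\sum_{\a\neq\b}S_{k-2}(A|\a\b)A_{\a\a,i}A_{\b\b,i}$ is indefinite; the paper processes it with the Guan--Ma algebraic estimates \eqref{GM-Alg-lem}, which convert it into $-\frac{k+1}{k}\frac{h_i^2}{h}$ plus absorbable remainders. This last point matters for more than bookkeeping: it is precisely the $-\frac{k+1}{k}\frac{h_i^2}{h}$ contribution that combines with $h_{ii}$ via $f_{ii}-\frac{k+1}{k}\frac{f_i^2}{f}=-kf^{\frac{k+1}{k}}(f^{-\frac1k})_{ii}$ to produce $D^2(f^{-1/k})$ rather than $D^2 f$ in the matrix \eqref{deformation-lemma-cond on f}. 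Discarding the term would leave you with a condition on $D^2f$ that does not match the stated hypothesis, so the lemma would not follow.

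A related overreach: you assert that every term quadratic in $A_{ij,\gamma}$ not packaged into $D\phi$ ``can be Cauchy--Schwarz'd against $\phi$.'' This is false for the individual components. Only the trace $\sum_{i\in B}A_{ii,\a}$ is $\sim 0$; terms like $A_{i\b,\a}^2$ with $i\in B$ and $\a,\b\in G$ are of order one and are not controlled by $\phi$ or $|D\phi|$. They must be cancelled algebraically, not absorbed, which is why the paper's $I_2$--$I_5$ bookkeeping and the citation of the claims $I_2\lesssim 0$, $I_3\lesssim 0$ from Guan--Ma are essential rather than routine.
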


Following the notations of \cite{CF85} and \cite{GM03}, for two functions $\cA(z)$, $\cB(z)$ defined in an open set $O\subset \mathbb{S}^n$, $z \in O$, we say that $\mathcal{A}(z)  \lesssim \mathcal{B}(z)$ if there exist positive constants $c_1$ and $c_2$ depending only on $||\varphi||_{C^3}$, $||f||_{C^2}$, $n$, $k$, $p$ and $C_0$ such that
	\eq{
		(\mathcal{A}-\mathcal{B})(z) \leq (c_1 |D \phi| +c_2 \phi) (z),
	}
where $\phi (z) = S_{\ell+1} (A[\g(z)])$. 
We also write $\mathcal{A}(z) \sim \mathcal{B}(z)$ if $\mathcal{A}(z) \lesssim \mathcal{B}(z)$ and $\mathcal{B}(z) \lesssim \mathcal{A}(z)$.
	For any $z\in O$ fixed, we choose a local orthonormal frame $\{e_1,\ldots,e_n\}$ so that the matrix $A[\varphi]$ is diagonal at $z$, and let $\lambda_1 \geq \cdots \geq \lambda_n$ be its eigenvalues. Since $S_\ell (A[\varphi]) \geq C_0$ and $A[\varphi] \geq 0$, there exists a positive constant $C$ depending on $\|\varphi\|_{C^3}$, $\|f\|_{C^2}$, $n$ and $C_0$ such that
	\eq{\label{assump on lambda}
		\lambda_1 \geq \cdots \geq \lambda_{\ell} \geq C \geq \lambda_{\ell+1} \geq \cdots \geq \lambda_n \geq 0.
	}	
	Let $G = \{1,2,\ldots, \ell\}$ and $B= \{\ell+1, \ldots, n\}$ be the ``good" and ``bad" sets of indices respectively. It follows from \eqref{assump on lambda} that
	\eq{
		0\sim \phi(z) = S_{\ell+1} (\lambda)=
  \sum_{1 \leq i_1<\cdots<i_{\ell+1} \leq n } \lambda_{i_1} \cdots \lambda_{i_{\ell+1}} \sim S_{\ell}(G) \bigg( \sum_{i \in B } \lambda_i \bigg) \sim \bigg( \sum_{i \in B} \lambda_i \bigg).
	}
	So we have 
	\eq{\label{lambda-0 B}
		\lambda_i \sim 0, \quad \forall \ i \in B.
	}
	This yields that
	\begin{itemize}
		\item 	For any $ m \geq \ell+1$, we have
		\eq{\label{Sm=0,m>l}
			S_{m} (A) \sim 0.
		}
		\item 	For any $1 \leq m  \leq  \ell$, we have
		\eq{\label{Sm(G)}
			S_m (A) \sim S_m (G), \quad S_m(A|j) \sim 
			\left\{  
			\begin{aligned}
				&S_m (G|j) , \quad &j \in G; \\
				&S_m (G), \quad &j \in B.
			\end{aligned}
			\right.	
		}
            and
            \eq{
            S_m(A|ij)\sim \left\{  
			\begin{aligned}
				&S_m (G|ij) , \quad &i,j \in G; \\
				&S_m (G|j),   \quad &i\in B,j\in G;\\
                    &S_m (G), \quad &i,j\in B,i\neq j.
			\end{aligned}
			\right.	\label{Sm(A|ij)}
            }
            We also set $S_{-1}=0$ and $S_{0}=1$ by convention.
		\item For all $i,j \in G$ with $i \neq j$,
		\eq{\label{S_l(A|i)=S_(l-1) (A|ij)=0}
			S_{\ell} (A| i) \sim 0, \quad S_{\ell-1} (A| ij) \sim 0. 
		}
	\end{itemize}
	Then by \eqref{Sm(G)} and \eqref{S_l(A|i)=S_(l-1) (A|ij)=0}, we have
	\eq{
		0 \sim \phi_{,\alpha} = S_{\ell+1}^{ij} A_{ij, \alpha} = \sum_i S_{\ell}(A| i) A_{ii, \a} \sim S_{\ell} (G)\sum_{i\in B} A_{ii, \alpha}.
	}
	In view of $C \geq S_\ell (A) \geq C_0$
    and $S_{\ell}(A)\sim S_{\ell}(G)$, we get
	\begin{equation}\label{Aii,a-0}
		\sum_{i \in B} A_{ii, \a} \sim 0, \quad \forall \ \a,
	\end{equation}
	and thus,
	\eq{\label{A_iia*A_jja}
	\sum_{\substack{i,j \in B\\ i \neq j}} A_{ii, \a} A_{jj, \a} = \(\sum_{i \in B} A_{ii, \a} \)^2- \sum_{i \in B} A_{ii, \a}^2 \sim  -\sum_{i \in B} A_{ii, \a}^2.	
	}
	For convenience, we denote $F=S_k(A)$ in this section.
	\begin{lemma}\label{lem-4.3}
    It holds that
		\eq{\label{Faa phiaa-v1}
			\sum_{\a}F^{\a\a} \phi_{\a\a} 
			\sim&  \sum_{i,\a}F^{\a\a} S_\ell (A|i) A_{ii, \a\a} \\
                    &- S_{\ell-1}(G)\bigg(\sum_{\substack{\a \in G\\ i,j \in B}} S_{k-1}(G|\a) A_{ij, \a}^2
			+\sum_{\a, i, j \in B} S_{k-1}(G) A_{ij,\a}^2\bigg)   \\
			&- 2 \bigg(\sum_{\substack{\a, i\in G\\ j \in B}}  S_{\ell-1} (G|i)S_{k-1}(G| \a) A_{ij, \a}^2+ \sum_{\substack{i \in G\\\a, j \in B}}  S_{\ell-1} (G|i)S_{k-1}(G) A_{ij, \a}^2 \bigg).	
			} 
	\end{lemma}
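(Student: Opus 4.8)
I would prove Lemma \ref{lem-4.3} by a direct pointwise computation of $\sum_\alpha F^{\alpha\alpha}\phi_{\alpha\alpha}$ at a fixed $z\in O$, followed by systematic application of the $\lesssim$-calculus set up above. Work in the orthonormal frame diagonalizing $A=A[\varphi]$ at $z$; then $F^{\alpha\beta}=S_k^{\alpha\beta}(A)$ is diagonal with $F^{\alpha\alpha}=S_{k-1}(A|\alpha)$. Since $\phi=S_{\ell+1}(A)$ is a polynomial in the entries of $A$, differentiating twice and using the diagonal structure together with the formulas for $S_{\ell+1}^{ij}$ and $S_{\ell+1}^{ij,rs}$ in \eqref{Sm-ijrs} gives, at $z$,
\[
\phi_{\alpha\alpha}=\sum_i S_\ell(A|i)\,A_{ii,\alpha\alpha}+\sum_{i\neq r}S_{\ell-1}(A|ir)\,A_{ii,\alpha}A_{rr,\alpha}-\sum_{i\neq j}S_{\ell-1}(A|ij)\,A_{ij,\alpha}^2.
\]
Contracting with $F^{\alpha\alpha}$ and summing over $\alpha$, I would leave the ``Hessian'' piece $\sum_{i,\alpha}F^{\alpha\alpha}S_\ell(A|i)A_{ii,\alpha\alpha}$ untouched, and reduce the two quadratic-in-first-derivatives sums modulo $\lesssim$.

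\textbf{Key steps of the reduction.} I would split every index sum according to the good set $G=\{1,\dots,\ell\}$ and the bad set $B=\{\ell+1,\dots,n\}$, and treat the coefficients $S_{\ell-1}(A|\cdot)$ case by case: if both removed indices lie in $G$ the coefficient is $\sim 0$ by \eqref{S_l(A|i)=S_(l-1) (A|ij)=0} and the term drops; if exactly one lies in $G$ then \eqref{Sm(A|ij)} gives $S_{\ell-1}(A|ir)\sim S_{\ell-1}(G|i)$ with $i\in G$; if both lie in $B$ then $S_{\ell-1}(A|ij)\sim S_{\ell-1}(G)$. In $\sum_{i\neq r}S_{\ell-1}(A|ir)A_{ii,\alpha}A_{rr,\alpha}$ the mixed case ($i\in G$, $r\in B$) organizes, after summing over $\alpha$, as $\sum_\alpha F^{\alpha\alpha}\sum_{i\in G}S_{\ell-1}(G|i)A_{ii,\alpha}\big(\sum_{r\in B}A_{rr,\alpha}\big)$, whose inner bracket is $\sim 0$ by \eqref{Aii,a-0} while the prefactor is bounded, so this case is negligible; the case $i,r\in B$ becomes $-\sum_{i\in B}A_{ii,\alpha}^2$ via the identity \eqref{A_iia*A_jja}. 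In $-\sum_{i\neq j}S_{\ell-1}(A|ij)A_{ij,\alpha}^2$ the case $i,j\in G$ drops, the mixed case contributes $-2\sum_{i\in G,j\in B}S_{\ell-1}(G|i)A_{ij,\alpha}^2$ (the factor $2$ from $A_{ij}=A_{ji}$), and the case $i,j\in B$ contributes $-S_{\ell-1}(G)\sum_{i\neq j\in B}A_{ij,\alpha}^2$. Finally I would replace $F^{\alpha\alpha}=S_{k-1}(A|\alpha)$ by $S_{k-1}(G|\alpha)$ when $\alpha\in G$ and by $S_{k-1}(G)$ when $\alpha\in B$ (legitimate modulo $\lesssim$ since the factors they multiply are bounded in terms of $\|\varphi\|_{C^3}$), and merge the diagonal piece $-S_{\ell-1}(G)\sum_{i\in B}A_{ii,\alpha}^2$ with the off-diagonal piece $-S_{\ell-1}(G)\sum_{i\neq j\in B}A_{ij,\alpha}^2$ into a single $\sum_{i,j\in B}A_{ij,\alpha}^2$. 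Summing over $\alpha$ then gives exactly \eqref{Faa phiaa-v1}.

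\textbf{Expected main obstacle.} The genuinely delicate point is not the algebra of elementary symmetric functions but the bookkeeping of the mixed good--bad terms: unlike the eigenvalues $\lambda_i$ with $i\in B$, the derivative entries $A_{ij,\alpha}$ that touch a bad index need not be small, so one cannot simply discard such terms. The whole reduction rests on the first-order relations \eqref{Aii,a-0} and \eqref{A_iia*A_jja} (which encode $D\phi\sim 0$), and one must carefully recognize which mixed terms survive — the off-diagonal $A_{ij,\alpha}$ with $i\in G$, $j\in B$ — and which collapse after these substitutions.
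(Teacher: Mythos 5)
Your proposal is correct and follows essentially the same route as the paper's proof: expand $\phi_{\a\a}$ via $S_{\ell+1}^{ij}$ and $S_{\ell+1}^{ij,rs}$, split the quadratic first-derivative sums according to the good/bad index sets, kill the $G$--$G$ terms by \eqref{S_l(A|i)=S_(l-1) (A|ij)=0}, kill the mixed diagonal products via \eqref{Aii,a-0}, convert the $B$--$B$ diagonal products to $-\sum_{i\in B}A_{ii,\a}^2$ via \eqref{A_iia*A_jja}, and finally replace $F^{\a\a}$ by $S_{k-1}(G|\a)$ or $S_{k-1}(G)$ modulo $\sim$. The identification of \eqref{Aii,a-0} and \eqref{A_iia*A_jja} as the crux, and the observation that off-diagonal $A_{ij,\a}$ with one bad index cannot be discarded, match the paper's argument exactly.
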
	
	
	\begin{proof}
		Differentiating $\phi(z) = S_{\ell+1} (A[\varphi] )$ twice gives
		\eq{
			\phi_{\a\a} &= S_{\ell+1}^{ij} A_{ij, \a\a} +S_{\ell+1}^{ij, rs} A_{ij, \a} A_{rs, \a}\\
			&=\sum_{i} S_{\ell} (A| i) A_{ii, \a\a} +  S_{\ell+1}^{ij, rs} A_{ij, \a} A_{rs, \a} .
		}
		Using \eqref{Sm-ijrs}, we have
		\eq{ \label{2nd-term}
			S_{\ell+1}^{ij, rs} A_{ij, \a} A_{rs, \a}=&~ \sum_{i \neq r} S_{\ell-1} (A| ir) A_{ii, \a} A_{rr, \a} - \sum_{i \neq j} S_{\ell-1} (A|ij) A_{ij, \a}^2 \\
                =&~\bigg(2\sum_{\substack{i\in B\\ j\in G}}+\sum_{\substack{i,j\in B\\i\neq j}}+\sum_{\substack{i,j\in G\\i\neq j}}\bigg) S_{\ell-1}(A|ij)(A_{ii,\a}A_{jj,\a}-A_{ij,\a}^2).
            }
            It follows from \eqref{Sm(A|ij)}, \eqref{S_l(A|i)=S_(l-1) (A|ij)=0}, \eqref{Aii,a-0} and \eqref{A_iia*A_jja} that
            \eq{ \label{mid-step-1}
            \sum_{\substack{i\in B\\j\in G}}S_{\ell-1}(A|ij)A_{ii,\a}A_{jj,\a} &\sim \( \sum_{j\in G} S_{\ell-1}(G|j) A_{jj,\a}\) \sum_{i\in B}A_{ii,\a} \sim 0,
            }
            \eq{ \label{mid-step-2}
            -\sum_{\substack{i\in B\\j\in G}}S_{\ell-1}(A|ij)A_{ij,\a}^2 &\sim -\sum_{\substack{i\in B\\j\in G}} S_{\ell-1}(G|j) A_{ij,\a}^2,
            }
            \eq{ \label{mid-step-3}
            \sum_{\substack{i,j\in G\\i\neq j}}S_{\ell-1}(A|ij)(A_{ii,\a}A_{jj,\a}-A_{ij,\a}^2)
            \sim 0,
            }
            \eq{ \label{mid-step-4}
            \sum_{\substack{i,j\in B\\i\neq j}}S_{\ell-1}(A|ij)(A_{ii,\a}A_{jj,\a}-A_{ij,\a}^2) &\sim S_{\ell-1}(G)\sum_{\substack{i,j\in B\\i\neq j}}(A_{ii,\a}A_{jj,\a}-A_{ij,\a}^2)\\
            &\sim -S_{\ell-1}(G)\bigg(\sum_{i\in B} A_{ii,\a}^2+\sum_{\substack{i,j\in B\\i\neq j}}A_{ij,\a}^2\bigg)\\
            &\sim -S_{\ell-1}(G)\sum_{i,j\in B}A_{ij,\a}^2.
            }
            Substituting \eqref{mid-step-1}, \eqref{mid-step-2}, \eqref{mid-step-3} and \eqref{mid-step-4} into \eqref{2nd-term}, we obtain
            \eq{
            S_{\ell+1}^{ij,rs}A_{ij,\a}A_{rs,\a} &\sim -2 \sum_{\substack{i \in B\\j\in G}} S_{\ell-1} (G|j) A_{ij,\a}^2 - \sum_{i,j \in B} S_{\ell-1}(G) A_{ij, \a}^2.
		}
		Since $F= S_k(A)$, it follows from \eqref{Sm(G)} that
		\eq{
			F^{\a\a} \sim  \left\{
			\begin{aligned}
				&S_{k-1} (G| \a ), \quad &\a \in G, \\
				&S_{k-1} (G) , \quad &\a \in B.
			\end{aligned}
			\right.
		}
		Contracting $\phi_{\a\a}$ with $F^{\a\a}$, we then have
		\eq{\label{Faa-phiaa-1}
                 &~\sum_{\a} F^{\a\a} \phi_{\a\a} \\
			=&~ \sum_{\a}F^{\a\a}  ( S_{\ell+1}^{ii} A_{ii,\a\a}  +S_{\ell+1}^{ij, rs} A_{ij, \a} A_{rs, \a})  \\
			\sim &~ \sum_{\a,i} F^{\a\a} S_\ell (A|i) A_{ii, \a\a} - S_{\ell-1}(G) \sum_{\a}\sum_{i,j \in B}F^{\a\a} A_{ij,\a}^2 - 2 \sum_{\a}\sum_{\substack{i \in G\\ j \in B}} S_{\ell-1} (G|i) F^{\a\a} A_{ij, \a}^2   \\
			\sim &~ \sum_{\a,i}F^{\a\a} S_\ell (A|i) A_{ii, \a\a} - S_{\ell-1}(G)\bigg(\sum_{\substack{\a \in G\\ i,j \in B}} S_{k-1}(G|\a) A_{ij, \a}^2
			+\sum_{\a, i, j \in B} S_{k-1}(G) A_{ij,\a}^2 \bigg)   \\
			&~- 2 \bigg(\sum_{\substack{\a, i\in G\\ j \in B}}  S_{\ell-1} (G|i)S_{k-1}(G| \a) A_{ij, \a}^2+ \sum_{\substack{i \in G\\\a, j \in B}}  S_{\ell-1} (G|i)S_{k-1}(G) A_{ij, \a}^2 \bigg). 
		}
		This completes the proof of Lemma \ref{lem-4.3}.
	\end{proof}

        By commuting the covariant derivatives and using some basic properties of the elementary symmetric polynomials, we convert \eqref{Faa phiaa-v1} into the following useful form.
	\begin{lemma}\label{lemma-Faa-phiaa}
        Denote $F(A)=S_k(A[\varphi] )=h(z)$. 
		Then
		\eq{\label{Faa-phiaa=I1+...+I5}
		\sum_{\a} F^{\a\a} \phi_{\a\a} \sim I_1 +I_2+I_3+I_4+I_5,
		}
		where
		\begin{align}
			I_1:=&~  S_{\ell}(G) \sum_{i \in B} \( h_{ii} -\metric{\frac{D \g}{\g}}{Dh}-\frac{\psi}{\varphi} kh - \frac{1}{\varphi}(S_1(G) h- (k+1) S_{k+1} (G))  \)\\
            &+\frac{2}{\varphi^2}\sum_{\substack{\a \in G\\ i \in B}} S_{\ell}(G)    S_{k-1} (G| \a) A_{\a\a} \varphi_{\a}^2, \label{def-I1}\\
			I_2:=&~ -\sum_{\substack{i \in B\\ \a, \b \in G\\ \a \neq \b}}S_\ell(G)
			 S_{k-2} (G|\a\b) A_{\a\a, i} A_{\b\b, i}-  \sum_{\substack{\a \in G\\ i,\b \in B}}S_{\ell-1}(G) S_{k-1}(G|\a) A_{i\b, \a}^2 \nonumber\\
			&-\sum_{\substack{\a, \b \in G\\ i \in B}} S_{\ell-1} (G|\b) S_{k-1}(G| \a) A_{i\b, \a}^2, \label{def-I2}\\
			I_3:=&~ \sum_{i, \a,\b \in B}S_\ell(G)S_{k-2} (G)   A_{\a\b, i}^2 -\sum_{\a, i, \b \in B} S_{\ell-1}(G) S_{k-1}(G) A_{i\b,\a}^2, \label{def-I3}\\
			I_4:=&~  \sum_{\substack{i \in B\\ \a,\b \in G\\ \a \neq \b}} S_\ell (G) S_{k-2} (G| \a\b) A_{\a\b, i}^2
			-\sum_{\substack{\a, \b \in G\\ i \in B}} S_{\ell-1} (G|\b) S_{k-1}(G|\a) A_{i\b, \a}^2, \label{def-I4}\\
			I_5:=&~ 2 \sum_{\substack{\a \in G\\ i, \b \in B}} S_\ell(G)S_{k-2} (G| \a) A_{\a\b, i}^2- 2 \sum_{\substack{\a \in G\\ i, \b \in B}} S_{\ell-1}(G|\a) S_{k-1}(G) A_{i\a, \b}^2.\label{def-I5}
		\end{align}	
	\end{lemma}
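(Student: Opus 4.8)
The plan is to start from the expression for $\sum_\a F^{\a\a}\phi_{\a\a}$ obtained in Lemma \ref{lem-4.3}, and to process the second-derivative term $\sum_{\a,i}F^{\a\a}S_\ell(A|i)A_{ii,\a\a}$ by commuting covariant derivatives. Since the sum over $i$ is dominated, modulo $\sim$, by the bad indices $i\in B$ (because $S_\ell(A|i)\sim S_\ell(G)$ for $i\in B$ while $S_\ell(A|\a)\sim 0$ for $\a\in G$ by \eqref{S_l(A|i)=S_(l-1) (A|ij)=0}), we reduce to $S_\ell(G)\sum_{i\in B}\sum_\a F^{\a\a}A_{ii,\a\a}$. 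For each $i\in B$ we then swap the two derivatives using the non-Codazzi commutator \eqref{s5:Aiiaa-Aaaii} from Lemma \ref{s5:lem-not codazzi}, writing $A_{ii,\a\a}=A_{\a\a,ii}+(\text{lower order in }A\text{ plus }\varphi\text{-terms})$. The extra terms produced — those involving $\frac{\varphi_l}{\varphi}(A_{ii,l}-A_{\a\a,l})$, $\frac{1}{\varphi}(A_{ii}^2-A_{\a\a}^2)$, $\frac{\psi}{\varphi}(A_{ii}-A_{\a\a})$, $\frac{2}{\varphi^2}(A_{ii}\varphi_i^2-A_{\a\a}\varphi_\a^2)$ — are to be collected into $I_1$, using $\lambda_i\sim 0$ for $i\in B$, $\sum_{i\in B}A_{ii,\a}\sim 0$, and the identities \eqref{Sm-ij delta-ij}--\eqref{Sm-ij A^2-ij} contracted against $F^{\a\a}\sim S_{k-1}(G|\a)$ or $S_{k-1}(G)$. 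This accounts for the first line of \eqref{def-I1}; the remaining $\frac{2}{\varphi^2}\sum S_\ell(G)S_{k-1}(G|\a)A_{\a\a}\varphi_\a^2$ term comes from the $A_{\a\a}\varphi_\a^2$ piece of the commutator (the $A_{ii}\varphi_i^2$ piece being negligible since $i\in B$).

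Next I would differentiate the equation $F(A)=S_k(A[\varphi])=h(z)$ twice in the direction $e_i$ with $i\in B$: differentiating once gives $\sum_\a F^{\a\a}A_{\a\a,i}=h_i$, and differentiating again and summing against $S_\ell(G)$ over $i\in B$ yields $S_\ell(G)\sum_{i\in B}\sum_\a F^{\a\a}A_{\a\a,ii}= S_\ell(G)\sum_{i\in B}(h_{ii}-\sum_{\a,\b}F^{\a\a,\b\b}A_{\a\a,i}A_{\b\b,i})+\text{terms with }D F^{\a\a}$. The first piece $S_\ell(G)\sum_{i\in B}h_{ii}$ is the leading term of $I_1$; substituting the equation $h=\varphi^{n+p-k}f$ converts the $\metric{D\varphi/\varphi}{Dh}$, $\frac{\psi}{\varphi}kh$, and $\frac{1}{\varphi}(S_1(G)h-(k+1)S_{k+1}(G))$ contributions — these last two arising when one replaces $h_{ii}$ using $\psi_{ii}$-type identities and the Newton-MacLaurin relation \eqref{Sm-ij A^2-ij} to handle $\sum_i F^{\a\a}(\dots)$, matching the structure of \eqref{def-I1}. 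The purely third-order-squared terms $-S_\ell(G)\sum_{i\in B}\sum_{\a,\b}F^{\a\a,\b\b}A_{\a\a,i}A_{\b\b,i}$, together with the two negative sums already present in \eqref{Faa phiaa-v1}, are to be split according to whether the indices $\a,\b$ lie in $G$ or $B$ and whether $\a=\b$, using \eqref{Sm-ijrs} for $S_k^{\a\a,\b\b}$ and \eqref{Sm(G)}; this bookkeeping produces precisely the five groups $I_1,\dots,I_5$ in \eqref{Faa-phiaa=I1+...+I5}. The off-diagonal derivatives $A_{\a\b,i}$ with $\a\ne\b$ that appear in $I_3,I_4,I_5$ come from the full double-derivative term $S_{\ell+1}^{ij,rs}A_{ij,\a}A_{rs,\a}$ and the analogous $F^{ij,rs}$ expansion once one no longer assumes $A$ is diagonalized simultaneously with all third derivatives.

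The main obstacle I anticipate is the combinatorial reorganization of the cubic terms in the third derivatives $A_{\a\b,i}$: one must carefully track which Newton-tensor coefficients ($S_{k-2}(G)$, $S_{k-2}(G|\a)$, $S_{k-2}(G|\a\b)$, $S_{\ell-1}(G)$, $S_{\ell-1}(G|\a)$, $S_{\ell-1}(G|i)$) multiply each square $A_{\a\b,i}^2$, keeping the cases $\a,\b\in G$ with $\a\ne\b$ versus $\a\in G,\b\in B$ versus $\a,\b\in B$ separate, and ensuring that all terms not of the form $A_{\a\b,i}^2$ ($i\in B$) either cancel, collapse into $I_1$, or are absorbed into the $\sim$ error $c_1|D\phi|+c_2\phi$. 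The splitting into $I_2,\dots,I_5$ is engineered so that each $I_j$ will later (outside this lemma) be shown $\lesssim 0$ by a Newton-type inequality, so the grouping here must be exactly the one that makes those later sign estimates work; getting the index ranges and coefficient patterns in \eqref{def-I2}--\eqref{def-I5} to match is the delicate part. Once the cubic terms are correctly parceled out, the remaining second-order and $\varphi$-gradient terms assemble into $I_1$ by a direct application of Lemma \ref{lem-Newton poly} and the formulas \eqref{s5:psi_i}, \eqref{s5:psi-ii}, \eqref{s5:Aiiaa-Aaaii}, which is routine.
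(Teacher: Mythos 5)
Your proposal follows the paper's proof essentially step for step: start from Lemma \ref{lem-4.3}, swap the derivatives $A_{ii,\a\a}\to A_{\a\a,ii}$ via the non-Codazzi commutator \eqref{s5:Aiiaa-Aaaii}, replace $\sum_\a F^{\a\a}A_{\a\a,ii}$ by $h_{ii}$ minus the $S_{k-2}(A|\a\b)$-quadratic terms obtained from differentiating $S_k(A)=h$ twice, absorb the commutator remainders $T_{i\a}$ into $I_1$ via the identities \eqref{Sm-ij delta-ij}--\eqref{Sm-ij A^2-ij}, and regroup the cubic terms into $I_2,\dots,I_5$. This is the same argument as in the paper, and your identification of the delicate bookkeeping of the Newton-tensor coefficients on each $A_{\a\b,i}^2$ is exactly where the paper's computation spends its effort.
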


	\begin{proof} 
	Differentiating both sides of the equation
    $F= S_k (A[\varphi(z)])=h(z)$ and using \eqref{Aii,a-0}, we have
		\eq{
		h_i= \sum_{\a} F^{\a\a} A_{\a\a, i} =&~ \sum_{\a \in G} S_{k-1} (G|\a) A_{\a\a, i} + S_{k-1}(G)\sum_{\a\in B} A_{\a\a, i} \\
		\sim &~ \sum_{\a \in G} S_{k-1} (G|\a) A_{\a\a, i}.
	}
	Using \eqref{Sm-ijrs}, we get
	\eq{
		h_{ii} =&~ F^{\a\b} A_{\a\b, ii}+ F^{\a\b, \mu\gamma} A_{\a\b, i} A_{\mu\gamma, i}  \\
	=&~ \sum_{\a}F^{\a\a} A_{\a\a, ii} + \sum_{\a \neq \b} S_{k-2}(A| \a\b) A_{\a\a, i} A_{\b\b, i} -\sum_{\a \neq \b} S_{k-2} (A|\a\b) A_{\a\b, i}^2.
	}
	Then by \eqref{Sm(A|ij)}, 
 \eqref{Aii,a-0} and \eqref{A_iia*A_jja} we have
	\eq{\label{Faa-Aaaii-v1}
		&~\sum_{\a}F^{\a\a} A_{\a\a,ii} \\
	=&~h_{ii} -\sum_{\a \neq \b} S_{k-2}(A| \a\b) A_{\a\a, i} A_{\b\b, i} +\sum_{\a \neq \b} S_{k-2} (A|\a\b) A_{\a\b, i}^2\\
    \sim&~h_{ii} -\sum_{\substack{\a,\b\in G\\ \a \neq \b}} S_{k-2}(G| \a\b) A_{\a\a, i} A_{\b\b, i} -\sum_{\substack{\a,\b\in B\\ \a \neq \b}} S_{k-2}(G) A_{\a\a, i} A_{\b\b, i}\\
    &~+  \sum_{\a \neq \b} S_{k-2} (G| \a\b) A_{\a\b, i}^2\\
	\sim&~ h_{ii}- \sum_{\substack{\a,\b\in G\\ \a \neq \b}} S_{k-2}(G|\a\b) A_{\a\a, i} A_{\b\b, i}+ S_{k-2} (G)\sum_{\a\in B} A_{\a\a, i}^2\\
	&~+ \sum_{\a \neq \b} S_{k-2} (G|\a\b) A_{\a\b, i}^2. 
	}
    Now we begin to convert \eqref{Faa phiaa-v1}.
	For the first term on the RHS of \eqref{Faa phiaa-v1}, we use the commutative formula \eqref{s5:Aiiaa-Aaaii} to derive
	\eq{
	\sum_{i, \a} F^{\a\a} S_{\ell}(A|i) A_{ii, \a\a} = &~	\sum_{i, \a} F^{\a\a} S_{\ell}(A|i) \bigg( A_{\a\a, ii} + \sum_l \frac{\varphi_l}{\varphi} (A_{ii,l} - A_{\a\a, l}) +T_{i\a} \bigg),}
	where
	\eq{\label{def-Tia}
		T_{i\a} :=\frac{1}{\g} (A_{ii}^2 -A_{\a\a}^2) +\frac{\psi}{\g}(A_{ii} -A_{\a\a}) - \frac{2}{\g^2} ( A_{ii}\g_i^2 - A_{\a\a} \g_{\a}^2 ).
	}
	To proceed further, it follows from \eqref{Sm-ij delta-ij}, \eqref{S_l(A|i)=S_(l-1) (A|ij)=0} and \eqref{Aii,a-0} that
	\eq{ 
	   &~\sum_{i, \a} F^{\a\a} S_{\ell}(A|i) \bigg(\sum_l  \frac{\varphi_l}{\varphi} (A_{ii,l} - A_{\a\a, l}) \bigg)\\
	= &~\sum_{\a} F^{\a\a} \sum_{i,l} \frac{\g_l}{\g} S_\ell(A|i) A_{ii,l} - \sum_{i} S_\ell(A|i) \sum_{\a,l} \frac{\g_l}{\g}F^{\a\a} A_{\a\a, l} \\
	\sim &~  \sum_{\a} F^{\a\a} \sum_{l} \frac{\g_l}{\g} S_\ell(G) \bigg(\sum_{i \in B}A_{ii,l}\bigg) -   (n-\ell) S_{\ell} (G) \metric{\frac{D \g}{\g}}{Dh}\\
	\sim &~  -(n-\ell) S_{\ell} (G)  \metric{\frac{D \g}{\g}}{Dh}.
	}
     Then we get
	\eq{\label{Faa-S_l-Aiiaa}
	\sum_{i, \a} F^{\a\a} S_{\ell}(A|i) A_{ii, \a\a} \sim 
	\sum_{i, \a} F^{\a\a} S_{\ell}(A|i) \( A_{\a\a, ii} +T_{i\a} \)-(n-\ell) S_{\ell} (G)  \metric{\frac{D \g}{\g}}{Dh}.
	}
	Plugging \eqref{Faa-Aaaii-v1} into the above formula \eqref{Faa-S_l-Aiiaa} and using 
    \eqref{S_l(A|i)=S_(l-1) (A|ij)=0}, we obtain
	\eq{\label{Faa-S_l-A_iiaa}
		\sum_{i, \a} F^{\a\a} S_{\ell}(A|i) A_{ii, \a\a}
		\sim&~ \sum_i S_{\ell}(A|i) h_{ii} - \sum_i \sum_{\substack{\a, \b \in G\\ \a \neq \b}} S_{\ell} (A|i) S_{k-2} (G| \a\b) A_{\a\a,i} A_{\b\b,i} \\
           &~+ \sum_{i, \a} F^{\a\a} S_{\ell} (A|i) T_{i \a}+ S_{k-2} (G) \sum_i \sum_{\a \in B} S_{\ell} (A|i) A_{\a\a,i}^2\\
           &~+ \sum_i \sum_{\a \neq \b} S_{\ell} (A|i) S_{k-2} (G| \a\b) A_{\a\b,i}^2 
           -(n-\ell) S_{\ell}(G) \metric{\frac{D \varphi}{\varphi}}{Dh}\\
		\sim&~ S_{\ell} (G) \sum_{i \in B} h_{ii} -  \sum_{\substack{i \in B \\ \a, \b \in G \\ \a \neq \b}} S_{\ell} (G) S_{k-2} (G| \a\b) A_{\a\a,i} A_{\b\b,i} \\
            &~+ \sum_{i, \a} F^{\a\a} S_{\ell} (A|i) T_{i \a}+ S_{\ell}(G) S_{k-2} (G) \sum_{i, \a \in B} A_{\a\a,i}^2\\
		&~+ \sum_{i\in B} \bigg( \sum_{\substack{\a, \b \in G\\ \a \neq \b}}+ 2\sum_{\substack{\a \in G\\ \b \in B}} + \sum_{\substack{\a, \b \in B \\ \a \neq \b}}\bigg) S_{\ell} (G) S_{k-2} (G| \a\b) A_{\a\b,i}^2\\
   &~-(n-\ell) S_{\ell}(G) \metric{\frac{D \varphi}{\varphi}}{Dh}.
	}
	Inserting \eqref{Faa-S_l-A_iiaa} into \eqref{Faa phiaa-v1}, we get
	\eq{
		\sum_{\a}F^{\a\a} \phi_{\a\a} 
        \sim&~ S_{\ell} (G) \sum_{i \in B} h_{ii} + \sum_{i, \a} F^{\a\a} S_{\ell} (A|i) T_{i\a}-(n-\ell) S_{\ell} (G)  \metric{\frac{D \g}{\g}}{Dh}\\
            &~+ S_\ell(G)S_{k-2} (G)  \sum_{i, \a \in B} A_{\a\a, i}^2+ \sum_{\substack{i \in B \\ \a,\b \in G \\ \a \neq \b}} S_\ell (G) S_{k-2} (G| \a\b) A_{\a\b, i}^2 \\
		&~
		+2 \sum_{\substack{\a \in G\\ i, \b \in B}} S_\ell(G)S_{k-2} (G| \a) A_{\a\b, i}^2 + \sum_{\substack{i, \a, \b \in B\\ \a \neq \b}} S_\ell(G) S_{k-2} (G) A_{\a\b, i}^2\\
		&~-\sum_{\substack{i \in B\\ \a, \b \in G\\ \a \neq \b}}
		S_\ell(G) S_{k-2} (G|\a\b) A_{\a\a, i} A_{\b\b, i}- \sum_{\substack{\a \in G\\ i,j \in B}} S_{\ell-1}(G)S_{k-1}(G|\a) A_{ij, \a}^2\\
		&~
		-\sum_{\a, i, j \in B} S_{\ell-1}(G) S_{k-1}(G) A_{ij,\a}^2-2 \sum_{\substack{\a, i\in G\\ j \in B}} S_{\ell-1} (G|i) S_{k-1}(G| \a) A_{ij, \a}^2\\
		&~- 2 \sum_{\substack{i \in G\\ \a, j \in B}} S_{\ell-1}(G|i) S_{k-1}(G) A_{ij, \a}^2.
  } 
         By changing the indices and rearranging the terms properly, we finally obtain
         \eq{
             \sum_{\a}F^{\a\a} \phi_{\a\a}\sim&~ S_{\ell} (G) \sum_{i \in B} h_{ii} + \sum_{i, \a} F^{\a\a} S_{\ell} (A|i) T_{i\a}-(n-\ell) S_{\ell} (G)  \metric{\frac{D \g}{\g}}{Dh}\\
            &~-\underbrace{S_\ell(G)\sum_{\substack{i \in B\\ \a, \b \in G\\ \a \neq \b}}
		 S_{k-2} (G|\a\b) A_{\a\a, i} A_{\b\b, i}- S_{\ell-1}(G)\sum_{\substack{\a \in G \\ i,\b \in B}} S_{k-1}(G|\a) A_{i\b, \a}^2}_{I_2}\\
            &~-\underbrace{\sum_{\substack{\a, \b \in G\\ i \in B}} S_{\ell-1} (G|\b) S_{k-1}(G| \a) A_{i\b, \a}^2}_{I_2}\\
            &~+ \underbrace{S_\ell(G)S_{k-2} (G)\sum_{i, \a,\b \in B} A_{\a\b, i}^2-S_{\ell-1}(G) S_{k-1}(G)\sum_{\a, i, \b \in B} A_{i\b,\a}^2}_{I_3}\\
            &+ \underbrace{S_\ell (G)\sum_{\substack{i \in B\\ \a,\b \in G\\ \a \neq \b}}  S_{k-2} (G| \a\b) A_{\a\b, i}^2- \sum_{\substack{\a, \b \in G\\ i \in B}} S_{\ell-1} (G|\b) S_{k-1}(G| \a) A_{i\b, \a}^2}_{I_4}\\
		&+\underbrace{2 S_\ell(G)\sum_{\substack{\a \in G \\ i, \b \in B}} S_{k-2} (G| \a) A_{\a\b, i}^2-2 S_{k-1}(G) \sum_{\substack{\a \in G \\ \b, i \in B}} S_{\ell-1}(G|\a) A_{i\a, \b}^2}_{I_5}.
        }
       
	To complete the proof, it remains to show that
	\eq{
	I_1 \sim S_{\ell} (G) \sum_{i \in B} h_{ii} + \sum_{i, \a} F^{\a\a} S_{\ell} (A|i) T_{i\a}-(n-\ell) S_{\ell} (G)  \metric{\frac{D \g}{\g}}{Dh}.
	}
	Using \eqref{def-Tia}, \eqref{Sm-ij delta-ij}, \eqref{Sm-ij A-ij} and \eqref{Sm-ij A^2-ij}, we have
	\eq{
		\sum_{i, \a} F^{\a\a} S_\ell(A| i) T_{i \a}=&~ 
		\frac{1}{\g} \sum_{\a} F^{\a\a} \sum_i S_\ell(A| i) A_{ii}^2- \frac{1}{\g} \sum_{i} S_\ell(A|i) \sum_{\a} F^{\a\a} A_{\a\a}^2\\
		&~+\frac{\psi}{\g} \sum_{\a} F^{\a\a} \sum_{i} S_\ell(A|i) A_{ii} -\frac{\psi}{\g} \sum_i S_\ell (A|i) \sum_{\a} F^{\a\a} A_{\a\a} \\
		&~-\frac{2}{\g^2} \sum_{\a}F^{\a\a} \sum_i S_\ell(A|i) A_{ii} \g_i^2 +\frac{2}{\g^2} \sum_{i} S_\ell (A|i) \sum_{\a} F^{\a\a} A_{\a\a} \g_{\a}^2\\
		=&~ (n-k+1) S_{k-1} (A) \( \frac{1}{\g} (S_1(A) S_{\ell+1} (A) - (\ell+2)S_{\ell+2}(A) ) \right. \\
		&~ \left. +\frac{\psi}{\g} (\ell+1) S_{\ell+1} (A)- \frac{2}{\g^2} \sum_i S_\ell(A| i) A_{ii} \g_i^2 \)\\
		&~-(n-\ell) S_\ell(A)  \( \frac{1}{\g} ( S_1(A) S_{k} (A ) - (k+1) S_{k+1}(A) ) \right. \\
		&~\left.  +\frac{\psi}{\g} k S_k(A) - \frac{2}{\g^2} \sum_{\a}F^{\a\a} A_{\a\a} \g_{\a}^2  \).
	}
	It follows from \eqref{lambda-0 B}, \eqref{Sm=0,m>l} and \eqref{S_l(A|i)=S_(l-1) (A|ij)=0} that 
    \eq{
    S_{\ell+1}(A) \sim  S_{\ell+2}(A) \sim &~0, \\
    S_{\ell} (A|i) A_{ii} \sim &~ 0, \quad \forall ~i.
    }
    Hence, we get
    \eq{
    \sum_{i, \a} F^{\a\a} S_\ell(A| i) T_{i \a}\sim&~-(n-\ell) S_\ell(G)  \( \frac{1}{\g} ( S_1(G) S_{k} (G ) - (k+1) S_{k+1}(G) ) \right. \\
		&~\left.  +\frac{\psi}{\g} k S_k(G) - \frac{2}{\g^2} \sum_{\a}F^{\a\a} A_{\a\a} \g_{\a}^2  \).
    }
    Recall that $F^{\a\a} \sim  S_{k-1} (G| \a)$ for any $\alpha \in G$, and $A_{\a\a} \sim 0$ for any $\alpha \in B$. Also note that $(n-\ell) = \sum_{i \in B} 1$.  Then we have
	\eq{
		&~ S_{\ell} (G) \sum_{i \in B} h_{ii} +\sum_{i, \a} F^{\a\a} S_{\ell} (A|i) T_{i \a}-(n-\ell) S_{\ell} (G)  \metric{\frac{D \g}{\g}}{Dh}\\
		\sim&~ S_{\ell}(G) \sum_{i \in B} \bigg( h_{ii}- \metric{\frac{D \g}{\g}}{Dh} -\frac{\psi}{\varphi} kh  -\frac{1}{\varphi}(S_1(G) h- (k+1) S_{k+1} (G))  \bigg)\\
		&~\quad - S_{\ell}(G) \sum_{i \in B} \bigg( -\frac{2}{\varphi^2} \sum_{\a \in G} S_{k-1} (G| \a) A_{\a\a} \varphi_{\a}^2 \bigg)=I_1.
	}
	This completes the proof of Lemma \ref{lemma-Faa-phiaa}.
    \end{proof}

    In the next step, we estimate $\sum_{\a} F^{\a\a} \phi_{\a\a}$ from above. In the following, we will commute the indices in the $3$-tensor $DA$ several times, which should be handled carefully as $A[\varphi]$ is not a Codazzi tensor by \eqref{s5:A-Codazzi}. Fortunately, we will show that tensor $DA$ is very close to a Codazzi tensor under the equivalence `$\sim$' in many cases.

    By the symmetry of $A[\varphi]$ and \eqref{s5:A-Codazzi}, we have
    \eq{
	A_{i\b, \a} - A_{\a\b, i} =A_{\b i, \a} -A_{\b \a, i} =
    -\frac{\g_{\a}}{\g} A_{\a\a} \delta_{i \b} + \frac{\g_i}{\g} A_{ii} \delta_{\a\b}.
    }
    Then by \eqref{lambda-0 B}, for $i, \a \in B$ we have
    \eq{
	A_{\a\b, i}^2 - A_{i\b, \a}^2 = (A_{\b\a, i} + A_{\b i, \a})  \bigg(-\frac{\g_i}{\g} A_{ii} \delta_{\b\a}+\frac{\g_{\a}}{\g} A_{\a\a} \delta_{\b i} \bigg) \sim 0.
	\label{A2abi-A2iba sim 0}
    }
    Similarly, for $i, \b \in B$, we have
    \eq{\label{Aabi-Aiba sim 0}
	A_{\a\b, i}^2 - A_{i\a, \b}^2 = A_{\a\b, i}^2 - A_{\a i, \b}^2 \sim 0.
    }
    For $i \in B$, $\a, \b \in G$ with $ \a \neq \b$, we have $\delta_{\a\b} = 0$, $\delta_{i \b} =0$ and then
    \eq{\label{Aabi=Aiba}
	A_{\a\b, i}^2 - A_{i\b, \a}^2=0.
    }
    For $i \in B$ and $\a \in G$, using $\delta_{\a i} = 0$ and $A_{ii} \sim 0$, we have
    \eq{\label{Aaia-Aaai sim 0}
	A_{\a i, \a} - A_{\a\a, i} = -\frac{\g_{\a}}{\g} A_{\a\a} \delta_{\a i} +\frac{\g_i }{\g} A_{ii} \delta_{\a\a} \sim 0.
    }

\begin{lemma}
	\eq{ \label{Faa-phiaa-lem5.4}
	&\sum_{\a} F^{\a\a} \phi_{\a\a} \\
 \lesssim &~   S_{\ell}(G) \sum_{i \in B} \( h_{ii}-\metric{\frac{D \g}{\g}}{Dh} -\frac{\psi}{\varphi} kh - \frac{1}{\varphi}(S_1(G) h- (k+1) S_{k+1} (G))  \)\\
    &~- \sum_{\substack{i \in B\\ \a, \b \in G\\ \a \neq \b}} S_\ell(G)
	 S_{k-2} (G|\a\b) A_{\a\a, i} A_{\b\b, i}-  \sum_{\substack{\a \in G\\ i,\b \in B}} S_{\ell-1}(G) S_{k-1}(G|\a) A_{i\b, \a}^2\\
	&~-\sum_{\substack{i \in B \\ \a, \b \in G \\ \a \neq \b}} S_{\ell-1} (G|\b) S_{k-1}(G| \a) A_{i\b, \a}^2- 2\sum_{\substack{i \in B\\ \a \in G}} S_{\ell-1} (G| \a) S_{k-1} (G| \a) A_{\a \a,i}^2.
	}
\end{lemma}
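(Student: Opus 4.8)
The plan is to start from the decomposition $\sum_{\alpha}F^{\alpha\alpha}\phi_{\alpha\alpha}\sim I_1+I_2+I_3+I_4+I_5$ established in Lemma~\ref{lemma-Faa-phiaa} and to bound the right-hand side from above, everything being measured against the equivalences `$\sim$' and `$\lesssim$'. The first step is a reduction of the third-order gradient terms $A_{\cdot\cdot,\cdot}^2$ appearing in $I_3,I_4,I_5$ and in the last sum of $I_2$ to common representatives. Although $A[\varphi]$ is not a Codazzi tensor, its defect \eqref{s5:A-Codazzi} is a multiple of $A_{jj}\,D\log\varphi$, and $A_{jj}\sim 0$ for $j\in B$; hence the almost-Codazzi relations \eqref{A2abi-A2iba sim 0}, \eqref{Aabi-Aiba sim 0}, \eqref{Aabi=Aiba}, \eqref{Aaia-Aaai sim 0} allow me to replace, up to `$\sim$', each squared third derivative so that the ``positive'' and ``negative'' pieces inside each $I_j$ carry identical gradient factors (for instance $A_{\alpha\beta,i}^2\sim A_{i\beta,\alpha}^2$ and $A_{\alpha\beta,i}^2\sim A_{i\alpha,\beta}^2$ whenever the two exchanged indices lie in $B$, and $A_{i\alpha,\alpha}^2\sim A_{\alpha\alpha,i}^2$ for $i\in B$, $\alpha\in G$). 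The one defect that genuinely survives is the triple $A_{i\alpha,i}$ with $i\in B$, $\alpha\in G$, for which \eqref{s5:A-Codazzi} gives the exact identity $A_{i\alpha,i}=A_{ii,\alpha}+\tfrac{\varphi_\alpha}{\varphi}A_{\alpha\alpha}$; this will be used at the very end.

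After this reduction the sign of $I_3,I_4,I_5$ is governed by coefficient inequalities for the elementary symmetric functions of the good set $G=\{\lambda_1,\dots,\lambda_\ell\}$, which has exactly $\ell$ entries, so that $S_{\ell}(G|\alpha)=0$ and $S_j(G)=\lambda_\alpha S_{j-1}(G|\alpha)$ when the left side is a full product. I would prove
\begin{gather*}
S_{\ell}(G)S_{k-2}(G)\le S_{\ell-1}(G)S_{k-1}(G),\\
S_{\ell}(G)S_{k-2}(G|\alpha)-S_{\ell-1}(G|\alpha)S_{k-1}(G)=-S_{\ell-1}(G|\alpha)S_{k-1}(G|\alpha),\\
2S_{\ell}(G)S_{k-2}(G|\alpha\beta)\le S_{\ell-1}(G|\beta)S_{k-1}(G|\alpha)+S_{\ell-1}(G|\alpha)S_{k-1}(G|\beta)\quad(\alpha\neq\beta\in G).
\end{gather*}
The first follows from the Newton--MacLaurin monotonicity of the quotients $S_j(G)/S_{j-1}(G)$ together with $\ell\ge k$ (and is trivial when $k=1$); the second by substituting $S_{\ell}(G)=\lambda_\alpha S_{\ell-1}(G|\alpha)$ and $S_{k-1}(G)=S_{k-1}(G|\alpha)+\lambda_\alpha S_{k-2}(G|\alpha)$; the third, after symmetrizing the $\alpha\neq\beta$ sum in $I_4$ (legitimate since $A_{i\beta,\alpha}=A_{\alpha\beta,i}$ is symmetric in $\alpha,\beta$ for $i\in B$, $\alpha,\beta\in G$), by factoring $S_{\ell-1}(G|\alpha)=\lambda_\beta S_{\ell-2}(G|\alpha\beta)$ and checking that the difference of the two sides equals $(\lambda_\alpha+\lambda_\beta)S_{\ell-2}(G|\alpha\beta)S_{k-1}(G|\alpha\beta)\ge 0$. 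Feeding these in yields $I_3\lesssim 0$, $I_4\lesssim-\sum_{i\in B,\alpha\in G}S_{\ell-1}(G|\alpha)S_{k-1}(G|\alpha)A_{\alpha\alpha,i}^2$, $I_5\lesssim-2\sum_{i\in B,\alpha\in G}S_{\ell-1}(G|\alpha)S_{k-1}(G|\alpha)A_{i\alpha,i}^2$, while $I_2$ reduces to exactly the second and third lines of \eqref{Faa-phiaa-lem5.4} plus a leftover $+\sum_{i\in B,\alpha\in G}S_{\ell-1}(G|\alpha)S_{k-1}(G|\alpha)A_{\alpha\alpha,i}^2$.

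It then remains to dispose of two positive leftover terms: the one just produced by $I_2$, and the positive summand $\tfrac{2}{\varphi^2}\sum_{i\in B,\alpha\in G}S_{\ell}(G)S_{k-1}(G|\alpha)A_{\alpha\alpha}\varphi_\alpha^2$ sitting inside $I_1$ in \eqref{def-I1}. The $I_2$ leftover is cancelled termwise by the bound for $I_4$ above. For the $I_1$ term I would rewrite $S_{\ell}(G)S_{k-1}(G|\alpha)A_{\alpha\alpha}=S_{\ell-1}(G|\alpha)S_{k-1}(G|\alpha)A_{\alpha\alpha}^2$ and then invoke the $I_5$ bound: expanding $A_{i\alpha,i}^2=(A_{ii,\alpha}+\tfrac{\varphi_\alpha}{\varphi}A_{\alpha\alpha})^2$, summing over $i\in B$, dropping the nonnegative $\sum_{i\in B}A_{ii,\alpha}^2$ and using $\sum_{i\in B}A_{ii,\alpha}\sim 0$ from \eqref{Aii,a-0}, one obtains $-2\sum_{i\in B,\alpha\in G}S_{\ell-1}(G|\alpha)S_{k-1}(G|\alpha)A_{i\alpha,i}^2\lesssim-\tfrac{2(n-\ell)}{\varphi^2}\sum_{\alpha\in G}S_{\ell-1}(G|\alpha)S_{k-1}(G|\alpha)A_{\alpha\alpha}^2\varphi_\alpha^2$, which cancels the $I_1$ term since there are exactly $n-\ell$ indices $i\in B$. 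Collecting the surviving terms gives precisely the right-hand side of \eqref{Faa-phiaa-lem5.4}.

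The hard part will be the bookkeeping in the last two steps: one must track exactly which Codazzi defects vanish under `$\sim$' and which do not, and then check that the negative quadratic forms coming out of $I_4$ and $I_5$ are quantitatively — not merely qualitatively — sufficient to absorb every stray positive term; in particular, that the single surviving defect $A_{i\alpha,i}=A_{ii,\alpha}+\tfrac{\varphi_\alpha}{\varphi}A_{\alpha\alpha}$, once squared and summed over $B$, produces exactly the coefficient needed to kill the $\varphi_\alpha^2A_{\alpha\alpha}^2/\varphi^2$ term left over from $I_1$.
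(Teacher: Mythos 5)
Your proposal follows essentially the same route as the paper: starting from the decomposition $\sum_\alpha F^{\alpha\alpha}\phi_{\alpha\alpha}\sim I_1+\cdots+I_5$, using the almost-Codazzi relations to equalize the third-order terms, bounding $I_3\lesssim 0$ and $I_4$ via the same Newton--MacLaurin-type coefficient inequalities, and absorbing the positive $\varphi_\alpha^2 A_{\alpha\alpha}/\varphi^2$ piece of $I_1$ by expanding $A_{i\alpha,i}=A_{ii,\alpha}+\tfrac{\varphi_\alpha}{\varphi}A_{\alpha\alpha}$ inside the $I_5$ bound, exactly as in the paper's \eqref{I5-upper-bound}--\eqref{I_1+I5-upper-bound}; the bookkeeping tallies correctly. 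The only blemish is the claimed identity in your third displayed inequality: the difference of the two sides is $S_{\ell-2}(G|\alpha\beta)\bigl((\lambda_\alpha+\lambda_\beta)S_{k-1}(G|\alpha\beta)+(\lambda_\alpha-\lambda_\beta)^2S_{k-2}(G|\alpha\beta)\bigr)$ rather than just the first summand, which is still nonnegative, so the argument is unaffected.
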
	
		
\begin{proof}
        By Lemma \ref{lemma-Faa-phiaa}, it suffices to estimate the terms $I_i (i=1,2,\ldots,5)$ from above. First, we estimate $I_2$ given by \eqref{def-I2}. Using \eqref{Aaia-Aaai sim 0} we get
	\eq{ \label{I_2-upper-bound}
	I_2 \sim& - \sum_{\substack{i \in B\\ \a, \b \in G\\ \a \neq \b}} S_\ell(G) 
	S_{k-2} (G|\a\b) A_{\a\a, i} A_{\b\b, i}-\sum_{\substack{\a \in G\\ i,\b \in B}} S_{\ell-1}(G) S_{k-1}(G|\a) A_{i\b, \a}^2\\
	&-\sum_{\substack{i \in B \\ \a, \b \in G \\ \a \neq \b}} S_{\ell-1} (G| \b) S_{k-1} (G| \a) A_{i\b, \a}^2-\sum_{\substack{i \in B \\ \a \in G}} S_{\ell-1} (G| \a) S_{k-1} (G|\a) A_{\a\a, i}^2.
	}
	Now, we estimate $I_3$ given by \eqref{def-I3}. Since $k \leq \ell$, using the (non-sharp) Newton-MacLaurin inequality
        \eq{
        S_\ell(G)S_{k-2}(G)\leq S_{\ell-1}(G)S_{k-1}(G)
        }
	and \eqref{A2abi-A2iba sim 0}, we have
	\eq{ \label{I_3-upper-bound}
	I_3 \leq S_{\ell-1}(G) S_{k-1} (G) \sum_{i,\a, \b \in B} (A_{\a\b,i}^2-A_{i \b, \a}^2) \sim 0.
	}
	Next, we estimate $I_4$ given by \eqref{def-I4}.  For any $\alpha, \beta \in G$ with $\a \neq \b$, we have 
	\eq{
		S_{\ell-1} (G|\b) S_{k-1} (G|\a) &= S_{\ell-1}(G|\b)(S_{k-1}(G|\a\b)+A_{\b\b}S_{k-2}(G|\a\b)) \\
         &\geq 	S_{\ell-1} (G|\b) A_{\b\b} S_{k-2} (G|\a\b) \\
         &= S_{\ell} (G) S_{k-2} (G|\a\b).
	}
	Using \eqref{Aabi=Aiba} and \eqref{Aaia-Aaai sim 0}, we get
	\eq{ \label{I_4-upper-bound}
	I_4=& \sum_{\substack{i \in B \\ \a,\b \in G\\ \a \neq \b}} S_\ell (G) S_{k-2} (G| \a\b) A_{\a\b, i}^2
	-\sum_{\substack{i \in B \\ \a, \b \in G}} S_{\ell-1} (G|\b) S_{k-1}(G| \a) A_{i\b, \a}^2\\
	\leq& \sum_{\substack{i \in B \\ \a,\b \in G \\ \a \neq \b}} S_{\ell-1} (G|\b) S_{k-1} (G| \a) (A_{\a\b, i}^2-A_{i\b, \a}^2)
	-  \sum_{\substack{i \in B \\ \a \in G}} S_{\ell-1} (G| \a) S_{k-1} (G| \a) A_{i\a, \a}^2\\
	\sim &-  \sum_{\substack{i \in B\\ \a \in G}} S_{\ell-1} (G| \a) S_{k-1} (G| \a) A_{\a \a,i}^2.
	}
	Then, we estimate $I_5$ given by \eqref{def-I5}. For any $\a \in G$, we have
	\eq{
		S_\ell(G)S_{k-2} (G| \a) &= S_{\ell-1}(G|\a) A_{\a\a} S_{k-2} (G| \a) \\
        &\leq S_{\ell-1}(G|\a) (S_{k-1}(G|\a)+A_{\a\a} S_{k-2} (G| \a))\\
        &= S_{\ell-1}(G|\a) S_{k-1}(G).
	}
	This together with \eqref{Aabi-Aiba sim 0}, \eqref{s5:A-Codazzi} and \eqref{lambda-0 B} gives
    \eq{ \label{I5-upper-bound}
	I_5 \sim& -2 \sum_{\substack{\a \in G\\ i, \b \in B}} \( S_{\ell-1} (G| \a) S_{k-1} (G) - S_{\ell}(G)  S_{k-2} (G| \a)\) A_{i\a, \b}^2\\
	\lesssim& - 2\sum_{\substack{\a \in G\\ i \in B}} \( S_{\ell-1} (G| \a) S_{k-1} (G) - S_{\ell} (G) S_{k-2} (G| \a) \) A_{i\a,i}^2\\
	\sim& -2 \sum_{\substack{\a \in G\\ i \in B}} \( S_{\ell-1} (G| \a) S_{k-1} (G) - S_{\ell} (G) S_{k-2} (G| \a) \) \( A_{ii, \a}^2+ 2 \frac{\varphi_{\a} }{\varphi} A_{\a\a} A_{ii, \a} + \frac{\varphi_{\a}^2}{\varphi^2 } A_{\a\a}^2 \)\\
	\lesssim& -2 \sum_{\substack{\a \in G\\i \in B}}  S_{\ell} (G) \(S_{k-1} (G)- S_{k-2} (G| \a) A_{\a\a} \) \frac{\varphi_{\a}^2}{\varphi^2} A_{\a\a}\\
	=& -2\sum_{\substack{\a \in G\\ i \in B}} S_{\ell}(G) S_{k-1} (G| \a)   \frac{\varphi_{\a}^2}{\varphi^2} A_{\a\a},
    }
    where in the second inequality we used \eqref{Aii,a-0} and 
    \eq{
    (S_{\ell-1}(G|\a) S_{k-1}(G)-S_{\ell}(G)S_{k-2}(G|\a))A_{\a\a}=S_\ell(G)(S_{k-1}(G)-A_{\a\a}S_{k-2}(G|\a)).
    }
    By \eqref{def-I1} and \eqref{I5-upper-bound}, we get
    \eq{\label{I_1+I5-upper-bound}
             I_1+I_5 
    \lesssim ~S_{\ell}(G) \sum_{i \in B} \( h_{ii} -\metric{\frac{D \g}{\g}}{Dh}-\frac{\psi}{\varphi} kh  -\frac{1}{\varphi}(S_1(G) h- (k+1) S_{k+1} (G))  \).
    }
    Finally, the desired estimate \eqref{Faa-phiaa-lem5.4} follows from \eqref{I_2-upper-bound}, \eqref{I_3-upper-bound}, \eqref{I_4-upper-bound} and \eqref{I_1+I5-upper-bound}.
\end{proof}	

The following estimates were proved in \cite{GM03}. 
\begin{lemma} 
	\eq{\label{GM-Alg-lem}
		- \sum_{\substack{i \in B\\ \a, \b \in G\\ \a \neq \b}} S_\ell (G)   S_{k-2} (G| \a\b) A_{\a\a, i} A_{\b\b i} 
	\lesssim&~
	-S_\ell (G)\sum_{i \in B} \frac{h_{i}^2}{h} + \sum_{\substack{i \in B\\ \a \in G}} S_{\ell-1} (G| \a) S_{k-1} (G |\a )A_{\a\a, i}^2,\\
	- \sum_{\substack{i \in B\\ \a \in G}} S_{\ell-1}(G| \a) S_{k-1} (G| \a) A_{\a\a, i}^2 \lesssim&~  -	S_\ell(G)\frac{1}{k} \sum_{i \in B}\frac{h_i^2}{h}.
	}
\end{lemma}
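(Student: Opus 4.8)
The plan is to reduce both estimates to pointwise algebraic inequalities in the \emph{good} eigenvalues and then verify those by elementary means. Fix $z\in O$, work in the frame diagonalizing $A[\g]$, and recall that the good eigenvalues satisfy $\lambda_1\ge\cdots\ge\lambda_\ell\ge C>0$. For a fixed bad index $i\in B$, regard $x_\a:=A_{\a\a,i}$ $(\a\in G)$ as free real variables; recall that $h_i\sim\sum_{\a\in G}S_{k-1}(G|\a)x_\a=:L_i$ and that $h=S_k(A[\g])\sim S_k(G)$, the latter being bounded below by a positive constant depending only on the admissible data --- indeed $S_\ell(A[\g])\ge C_0$, $A[\g]\ge0$ and the bound on $\|\g\|_{C^3}$ force $\lambda_1,\dots,\lambda_\ell$, hence $S_k$, away from $0$. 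Consequently division by $h$ is harmless and replacing $h_i$ by $L_i$, $h$ by $S_k(G)$, $F^{\a\a}$ by $S_{k-1}(G|\a)$, and $S_\ell(A)$ by $S_\ell(G)$ costs only admissible errors of the form $c_1|D\phi|+c_2\phi$. Two elementary identities for the good eigenvalues will be used throughout: since $|G|=\ell$ one has $S_{\ell-1}(G|\a)=S_\ell(G)/\lambda_\a$, and Euler's relation gives $\sum_{\a\in G}\lambda_\a S_{k-1}(G|\a)=k\,S_k(G)$. With these, each inequality becomes a quadratic-form inequality in $x=(x_\a)_{\a\in G}$.

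For the second estimate, dividing by $S_\ell(G)>0$ and using $S_{\ell-1}(G|\a)=S_\ell(G)/\lambda_\a$ turns the claim into
\[
\Bigl(\sum_{\a\in G}S_{k-1}(G|\a)\,x_\a\Bigr)^2\;\le\;k\,S_k(G)\sum_{\a\in G}\frac{S_{k-1}(G|\a)}{\lambda_\a}\,x_\a^2,
\]
which is exactly the Cauchy--Schwarz inequality with weights $\lambda_\a S_{k-1}(G|\a)$ together with $\sum_{\a\in G}\lambda_\a S_{k-1}(G|\a)=k\,S_k(G)$. This step is routine.

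The substantive part is the first estimate. After dividing by $S_\ell(G)$ and the same substitution, it reduces to
\[
\sum_{\substack{\a,\b\in G\\ \a\ne\b}}S_{k-2}(G|\a\b)\,x_\a x_\b+\sum_{\a\in G}\frac{S_{k-1}(G|\a)}{\lambda_\a}\,x_\a^2\;\ge\;\frac{1}{S_k(G)}\Bigl(\sum_{\a\in G}S_{k-1}(G|\a)\,x_\a\Bigr)^2.
\]
I would then substitute $x_\a=\lambda_\a y_\a$ (legitimate since $\lambda_\a>0$) and use the identities $\lambda_\a S_{k-1}(G|\a)=S_k(G)-S_k(G|\a)$ and $\lambda_\a\lambda_\b S_{k-2}(G|\a\b)=S_k(G)-S_k(G|\a)-S_k(G|\b)+S_k(G|\a\b)$; after expanding and cancelling, the inequality collapses to
\[
\sum_{\a\in G}S_k(G|\a)\,y_\a^2+\sum_{\substack{\a,\b\in G\\ \a\ne\b}}S_k(G|\a\b)\,y_\a y_\b\;\ge\;\frac{1}{S_k(G)}\Bigl(\sum_{\a\in G}S_k(G|\a)\,y_\a\Bigr)^2,
\]
that is, $S_k(G)\,\widetilde M-vv^{T}\succeq0$, where $\widetilde M_{\a\a}=S_k(G|\a)$, $\widetilde M_{\a\b}=S_k(G|\a\b)$ for $\a\ne\b$, and $v=\bigl(S_k(G|\a)\bigr)_{\a\in G}$. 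To finish I would observe that $S_k(G)\,\widetilde M-vv^{T}$ is a weighted graph Laplacian: for $\a\ne\b$ its $(\a,\b)$-entry equals $\lambda_\a\lambda_\b\bigl(S_k(G|\a\b)S_{k-2}(G|\a\b)-S_{k-1}(G|\a\b)^2\bigr)\le0$ by Newton's inequality for the $(\ell-2)$-element set $G|\a\b$, while all row sums vanish because $\sum_{\b\in G}\widetilde M_{\a\b}=(\ell-k)S_k(G|\a)$ and $\sum_{\b\in G}v_\b=(\ell-k)S_k(G)$. A symmetric matrix with nonpositive off-diagonal entries and vanishing row sums is positive semidefinite, which gives the claim and recovers the corresponding algebraic estimates of Guan--Ma \cite{GM03}.

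The hard part is the first estimate, and the step that requires care is recognizing --- after the change of variables $x_\a=\lambda_\a y_\a$ --- the Laplacian structure of $S_k(G)\,\widetilde M-vv^{T}$ and the correct sign of its off-diagonal entries from Newton's inequality; the rest is bookkeeping, provided one checks that every term discarded along the way is genuinely of the form $c_1|D\phi|+c_2\phi$, for which the lower bound $h=S_k(A[\g])\ge c>0$ is used essentially.
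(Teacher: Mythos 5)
Your proposal is correct, but it is worth pointing out that the paper does not actually prove this lemma: it simply defers to the claims $I_2\lesssim 0$ and $I_3\lesssim 0$ on p.~571 of Guan--Ma \cite{GM03}, so you have supplied a self-contained argument where the authors give only a citation. Your reduction is sound: after replacing $h_i$ by $\sum_{\a\in G}S_{k-1}(G|\a)A_{\a\a,i}$ and $h$ by $S_k(G)$ (admissible because $h=\varphi^{n+p-k}f$ is bounded below, so the error $h_i^2/h-L_i^2/h$ is controlled by $h_i-L_i\sim 0$), both inequalities become pointwise quadratic-form inequalities in $x_\a=A_{\a\a,i}$. The second one is exactly weighted Cauchy--Schwarz combined with the Euler relation $\sum_{\a\in G}\lambda_\a S_{k-1}(G|\a)=kS_k(G)$. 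For the first, your identities $\lambda_\a S_{k-1}(G|\a)=S_k(G)-S_k(G|\a)$ and $\lambda_\a\lambda_\b S_{k-2}(G|\a\b)=S_k(G)-S_k(G|\a)-S_k(G|\b)+S_k(G|\a\b)$ are correct, the off-diagonal entry of $S_k(G)\widetilde M-vv^T$ does equal $\lambda_\a\lambda_\b\bigl(S_k(G|\a\b)S_{k-2}(G|\a\b)-S_{k-1}(G|\a\b)^2\bigr)\le 0$ by Newton's inequality on the $(\ell-2)$-element set $G|\a\b$, and the row sums vanish by the counting identities $\sum_{\b\in G}S_k(G|\a\b)=(\ell-1-k)S_k(G|\a)$ and $\sum_{\b\in G}S_k(G|\b)=(\ell-k)S_k(G)$; a symmetric matrix with nonpositive off-diagonal entries and zero row sums is a weighted graph Laplacian, hence positive semi-definite. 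The edge cases $k=1$ and $\ell=k$ are covered by the conventions $S_{-1}=0$, $S_0=1$. So your Laplacian/Newton argument is a clean, complete replacement for the outsourced step, and arguably more transparent than chasing the corresponding computation in \cite{GM03}.
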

\begin{proof}
The estimates \eqref{GM-Alg-lem} are exactly the claims $I_2 \lesssim 0$ and $I_3 \lesssim 0$ in \cite[p. 571]{GM03}.
\end{proof}
Inserting the estimates \eqref{GM-Alg-lem} into \eqref{Faa-phiaa-lem5.4}, using the Newton-MacLaurin inequality
    \eq{
    S_1(G) h-(k+1) S_{k+1} (G)\sim  S_1(G) S_k(G) -(k+1) S_{k+1} (G)\geq 0,
    }
and 
\eq{
- \sum_{\substack{\a\in G \\ i,\b\in B}} S_{\ell-1}(G)S_{k-1}(G|\a)A_{i\b,\a}^2\leq &~0,\\
-\sum_{\substack{i \in B \\ \a, \b \in G \\ \a \neq \b}} S_{\ell-1} (G|\b) S_{k-1}(G| \a) A_{i\b, \a}^2\leq &~ 0,
}
we obtain the following useful estimate.
\begin{lemma} \label{lem-Faaphiaa-final-h}
	\eq{
		\sum_{\a}F^{\a\a} \phi_{\a\a} \lesssim &~ S_\ell(G) \sum_{i \in B} \bigg(h_{ii} -\metric{\frac{D \g}{\g}}{Dh}
  -\frac{k+1}{k} \frac{h_i^2}{h} -\frac{\psi}{\g} k h \bigg).
	}
\end{lemma}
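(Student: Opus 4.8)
The plan is to start from the estimate \eqref{Faa-phiaa-lem5.4}, already established, and to discard one after another every term on its right-hand side except the four appearing in the assertion, using the two algebraic inequalities \eqref{GM-Alg-lem} together with the Newton--MacLaurin inequality. First I would dispose of the scalar lower-order term: since $h=S_k(A[\varphi])\sim S_k(G)$ (because $\lambda_i\sim 0$ for every $i\in B$), identity \eqref{Sm-ij A^2-ij} applied to the good block gives
\eq{
S_1(G)h-(k+1)S_{k+1}(G)\sim S_1(G)S_k(G)-(k+1)S_{k+1}(G)=\sum_{\a\in G}S_{k-1}(G|\a)\lambda_\a^2\geq 0,
}
the coefficients $S_{k-1}(G|\a)$ being nonnegative precisely because the eigenvalues indexed by $G$ are bounded below by the positive constant of \eqref{assump on lambda}. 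As $S_\ell(G)>0$ and $\sum_{i\in B}$ has only $n-\ell$ terms, the contribution $-\tfrac1\varphi S_\ell(G)\sum_{i\in B}\big(S_1(G)h-(k+1)S_{k+1}(G)\big)$ is $\lesssim 0$ and may be dropped.

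Next, the two quadratic terms $-\sum_{\a\in G,\,i,\b\in B}S_{\ell-1}(G)S_{k-1}(G|\a)A_{i\b,\a}^2$ and $-\sum_{i\in B,\,\a\neq\b\in G}S_{\ell-1}(G|\b)S_{k-1}(G|\a)A_{i\b,\a}^2$ of \eqref{Faa-phiaa-lem5.4} are manifestly $\leq 0$ (every factor $S_{\ell-1}(G|\cdot)$, $S_{k-1}(G|\cdot)$, $S_{\ell-1}(G)$ is nonnegative on the good cone), so I discard them. It remains to treat the two genuinely bad terms $-\sum_{i\in B,\,\a\neq\b\in G}S_\ell(G)S_{k-2}(G|\a\b)A_{\a\a,i}A_{\b\b,i}$ and $-2\sum_{i\in B,\,\a\in G}S_{\ell-1}(G|\a)S_{k-1}(G|\a)A_{\a\a,i}^2$. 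Feeding the first into the first inequality of \eqref{GM-Alg-lem} turns it into $-S_\ell(G)\sum_{i\in B}h_i^2/h$ plus exactly the piece $+\sum_{i\in B,\,\a\in G}S_{\ell-1}(G|\a)S_{k-1}(G|\a)A_{\a\a,i}^2$; adding the $-2(\cdots)$ term collapses that piece to a net $-\sum_{i\in B,\,\a\in G}S_{\ell-1}(G|\a)S_{k-1}(G|\a)A_{\a\a,i}^2$, which the second inequality of \eqref{GM-Alg-lem} bounds by $-\tfrac1k S_\ell(G)\sum_{i\in B}h_i^2/h$. Hence these two terms together are $\lesssim-\tfrac{k+1}{k}S_\ell(G)\sum_{i\in B}h_i^2/h$, and substituting everything back into \eqref{Faa-phiaa-lem5.4} leaves
\eq{
\sum_\a F^{\a\a}\phi_{\a\a}\lesssim S_\ell(G)\sum_{i\in B}\bigg(h_{ii}-\metric{\frac{D\varphi}{\varphi}}{Dh}-\frac{k+1}{k}\frac{h_i^2}{h}-\frac{\psi}{\varphi}kh\bigg),
}
which is the assertion.

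The substance of this statement has already been absorbed into \eqref{Faa-phiaa-lem5.4} and the algebraic estimates \eqref{GM-Alg-lem}, so no real obstacle remains here; the argument above is bookkeeping, and it will be the last ingredient before the full deformation lemma is obtained by re-expressing $h$, $Dh$, $h_{ii}$ through the equation $h=\varphi^{n+p-k}f$ and comparing with the matrix in \eqref{deformation-lemma-cond on f}. The only points needing care are (i) checking that every $S_m(G|\cdots)$ coefficient met along the way is nonnegative --- which is exactly why the lower bound \eqref{assump on lambda} on the good eigenvalues was recorded --- and (ii) making sure the $\sim$ and $\lesssim$ error terms accumulated at each substitution stay of the admissible form $c_1|D\phi|+c_2\phi$, with $c_1,c_2$ depending only on $\|\varphi\|_{C^3}$, $\|f\|_{C^2}$, $n$, $k$, $p$ and $C_0$.
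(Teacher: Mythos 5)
Your proposal is correct and follows essentially the same route as the paper: the paper also obtains the lemma by inserting the two inequalities of \eqref{GM-Alg-lem} into \eqref{Faa-phiaa-lem5.4}, discarding the two manifestly nonpositive quadratic terms, and dropping the term $-\tfrac{1}{\varphi}\bigl(S_1(G)h-(k+1)S_{k+1}(G)\bigr)$ via the Newton--MacLaurin inequality, with the two $h_i^2/h$ contributions combining to the coefficient $\tfrac{k+1}{k}$ exactly as you describe.
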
	

    \begin{lemma}\label{lem-convexity of h->f}
    Fix $i \in B$. Then
    \eq{
          &~ h_{ii} -\metric{\frac{D \g}{\g}}{Dh}-\frac{k+1}{k} \frac{h_i^2}{h} -\frac{\psi}{\g} k h\\
       \sim &~ -k \varphi^{n+p-k} f^{\frac{k+1}{k} } \left\{ (f^{-\frac{1}{k} } )_{ii} +\frac{2(k-p-n)}{k} (f^{-\frac{1}{k} })_i \frac{\varphi_i}{\varphi} - \metric{D f^{-\frac{1}{k} }}{\frac{D \varphi}{\varphi} }  \right. \\
        &~\left. -\( \frac{(k-p-n)(n+p)}{k^2} \frac{\varphi_i^2}{\varphi^2} - \frac{n+p}{2k} \frac{|D \varphi|^2}{\varphi^2} - \frac{n+p}{2k} - \frac{2k-p-n}{2k} \frac{1}{\varphi^2} \)f^{ -\frac{1}{k} } \right\}.
    }
    \end{lemma}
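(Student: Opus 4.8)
The plan is to prove this by a direct computation: we substitute the equation $S_k(A[\varphi]) = h = \varphi^{n+p-k}f$ into the four quantities on the left, eliminate the second covariant derivatives of $\varphi$ with the help of ``$\sim$'', and rewrite the result in terms of $w := f^{-1/k}$. It is convenient to abbreviate $a := n+p-k$, so that $a+k = n+p$, $\,k-p-n = -a$ and $2k-p-n = k-a$; the various coefficients appearing in the stated right-hand side are just these quantities written out.

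First I would differentiate $h = \varphi^{a}f$ once and twice by the Leibniz rule for $D$ on $\bbS^n$, obtaining $h_i$, $h_{ii}$, $\langle D\varphi/\varphi, Dh\rangle$, $h_i^2/h$ and $\psi h/\varphi$ in terms of $\varphi$, $\varphi_i$, $\varphi_{ii}$, $\psi$, $f$, $f_i$, $f_{ii}$. The equivalence ``$\sim$'' enters only to remove $\varphi_{ii}$: by \eqref{s5:rel-A-phi-psi} we have $\varphi_{ii} = A_{ii}[\varphi] - \varphi + \psi = \lambda_i + \psi - \varphi$, and since $i \in B$ with $\lambda_i \geq 0$, the term $\lambda_i$ is $\sim 0$ by \eqref{lambda-0 B} and enters $h_{ii}$ only through the bounded factor $a\varphi^{a-1}f$, hence may be dropped. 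After this, no second derivative of $\varphi$ survives, and the left-hand side becomes a combination of terms built from $\varphi^{\pm 1}$, $D\varphi$, $\psi$, $f$, $Df$, $D^2 f$.

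Next I would set $w = f^{-1/k}$, so that $f = w^{-k}$, $f_i = -kw^{-k-1}w_i$, $f_{ii} = -kw^{-k-1}w_{ii} + k(k+1)w^{-k-2}w_i^2$ and $f_i^2/f = k^2 w^{-k-2}w_i^2$, and finally expand $\psi = \tfrac12|D\varphi|^2/\varphi + \tfrac12(\varphi + 1/\varphi)$. Collecting terms, the left-hand side factors as $-k\varphi^{a}w^{-(k+1)} = -k\varphi^{n+p-k}f^{(k+1)/k}$ times precisely the bracket in the statement: the $w_{ii}$-term comes only from $f_{ii}$; the $w_i^2$-terms from $f_{ii}$ and from $\tfrac{k+1}{k}h_i^2/h$ cancel; the mixed terms $w_i\varphi_i$ and $\langle Dw, D\varphi\rangle$ end up with coefficients $\tfrac{2(k-p-n)}{k\varphi}$ and $-\tfrac1\varphi$, using $-2ak + 2a(k+1) = 2a = -2(k-p-n)$; the $\varphi_i^2$-coefficient is $a(a-1) - \tfrac{k+1}{k}a^2 = -\tfrac{a(a+k)}{k}$, matching $\tfrac{(k-p-n)(n+p)}{k^2}$; and the $f^{-1/k}$-proportional terms on the right come from combining $-a\varphi^{a-2}|D\varphi|^2 w^{-k}$ (from $\langle D\varphi/\varphi, Dh\rangle$) with the leftover $(a-k)\psi\varphi^{a-1}w^{-k}$ (from $h_{ii}$ and $\psi h/\varphi$), the explicit $\psi$ distributing the extra $\tfrac{a-k}{2}$ onto the $|D\varphi|^2$, $\varphi^2$ and constant coefficients. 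Rewriting $a = n+p-k$ throughout then reproduces the claimed identity.

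There is no genuine analytic obstacle here; the difficulty is purely in the bookkeeping. The one thing that must be checked carefully is that the discarded $\lambda_i$-contribution is really absorbed by ``$\sim$'': this works because $\lambda_i \geq 0$ for $i \in B$, so the error admits the required one-sided bound $c_1|D\phi| + c_2\phi$. The step most likely to introduce sign errors is the expansion of $\psi$, so I would first group all $\psi$-proportional terms (their total coefficient being $a-k = -(2k-p-n)$) and only afterwards substitute $\psi = \tfrac12|D\varphi|^2/\varphi + \tfrac12(\varphi + 1/\varphi)$.
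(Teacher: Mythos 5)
Your proposal is correct and follows essentially the same route as the paper's proof: differentiate $h=\varphi^{n+p-k}f$ directly, use $\lambda_i\sim 0$ for $i\in B$ (via \eqref{s5:rel-A-phi-psi}) to eliminate $\varphi_{ii}$, and then pass to $f^{-1/k}$ through the identities $f_i=-kf^{\frac{k+1}{k}}(f^{-\frac1k})_i$ and $f_{ii}-\frac{k+1}{k}\frac{f_i^2}{f}=-kf^{\frac{k+1}{k}}(f^{-\frac1k})_{ii}$. The coefficient checks you record (in particular $a(a-1)-\frac{k+1}{k}a^2=-\frac{a(a+k)}{k}$ and the grouping of the $\psi$-proportional terms) match the paper's computation.
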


    \begin{proof}
   Recall equation \eqref{eq-in deformation lemma},  \eq{ \label{un-normalized-horo-p-CM-problem}
    F=S_k(A)=h=\g^{-t} f, \quad t=k-p-n.
    }
    Fix $i \in B$. Taking the covariant derivatives of $h = \g^{-t} f$, we get
    \eq{
	h_i =&~ -t \varphi^{-t-1}  \varphi_i f+ \varphi^{-t} f_i, \\
	h_{ii} =&~t (t+1) \varphi^{-t-2} \varphi_i^2 f-2t \varphi^{-t-1} \varphi_i f_i -t \varphi^{-t-1} f \varphi_{ii} +\varphi^{-t} f_{ii},\\
  \frac{h_i^2}{h} =&~ t^2\varphi^{-t-2} \varphi_i^2 f +\varphi^{-t} \frac{f_i^2}{f} -2t \varphi^{-t-1}f_i\varphi_i.
    }
    Since $\lambda_i \sim 0$ for $i \in B$ by \eqref{lambda-0 B}, we have
    \begin{equation*}
	\varphi_{ii} \sim \frac{1}{2} \frac{|D \varphi|^2}{\varphi} - \frac{1}{2}\(\varphi -\frac{1}{ \varphi } \).
    \end{equation*}
    Then we get
    \eq{ \label{estimte-RHS-deformation-lemma}
    &~h_{ii} - \frac{k+1}{k} \frac{h_i^2}{h} -\frac{\metric{Dh}{D \varphi}}{\varphi}- \frac{\psi}{\varphi} kh\\
    =&~t (t+1) \varphi^{-t-2} \varphi_i^2 f-2t \varphi^{-t-1} \varphi_i f_i -t \varphi^{-t-1} f \varphi_{ii} +\varphi^{-t} f_{ii}\\
    &~- \frac{k+1}{k} t^2 \varphi^{-t-2} \varphi_i^2f -\frac{k+1}{k} \varphi^{-t}\frac{f_i^2}{f}+2 \frac{k+1}{k} t\varphi^{-t-1} f_i \varphi_i\\
    &~- \sum_{j=1}^n \frac{\varphi_j}{\varphi} \(-t \varphi^{-t-1} \varphi_j f+ \varphi^{-t} f_j \)- \( \frac{1}{2} \frac{|D \varphi|^2}{\varphi^2} +\frac{1}{2} (1+ \frac{1}{\varphi^2}) \) k \varphi^{-t} f \\
    =&~\varphi^{-t} f_{ii}- \frac{k+1}{k} \varphi^{-t} \frac{f_i^2}{f} 
    +\frac{2t}{k} \varphi^{-t-1} \varphi_i f_i-\varphi^{-t-1} \metric{Df}{D \varphi} - t\varphi^{-t-1} f \varphi_{ii}\\
    &~+ \frac{t(k-t)}{k} \varphi^{-t-2} \varphi_i^2 f 
    + (t- \frac{k}{2}) \varphi^{-t-2} |D \varphi|^2 f- \frac{k}{2}(1+ \frac{1}{\varphi^2})\varphi^{-t} f\\
    \sim&~\varphi^{-t} \left\{  f_{ii} - \frac{k+1}{k} \frac{f_i^2}{f} + \frac{2t}{k} \frac{f_i \varphi_i}{\varphi}- \metric{Df}{\frac{D\varphi}{\varphi}}  \right.\\
    &~\left. + \bigg( \frac{t(k-t)}{k} \frac{\varphi_i^2}{\varphi^2} -\frac{k-t}{2} \frac{|D \varphi|^2}{\varphi^2}- \frac{k-t}{2} - \frac{k+t}{2} \frac{1}{\varphi^2} \bigg) f \right\}.
    }
    Using the relation
\begin{align*}
f_i =-k f^{\frac{k+1}{k}} (f^{-\frac{1}{k}})_{i}, \quad
f_{ii} - \frac{k+1}{k} \frac{f_i^2}{f} = -k f^{ \frac{k+1}{k} } (f^{-\frac{1}{k}} )_{ii},
\end{align*}
we rewrite the estimate \eqref{estimte-RHS-deformation-lemma} as 
\begin{align*}
&~h_{ii} - \frac{k+1}{k} \frac{h_i^2}{h} -\frac{\metric{Dh}{D \varphi}}{\varphi}- \frac{\psi}{\varphi} kh\\
\sim&~-k \varphi^{-t} f^{\frac{k+1}{k} }\left\{ (f^{-\frac{1}{k} } )_{ii} +\frac{2t}{k} \frac{(f^{-\frac{1}{k}})_i\varphi_i}{\varphi} - \metric{D f^{-\frac{1}{k}}}{\frac{D \varphi}{\varphi} }  \right.\\
&~\left. -\bigg( \frac{t(k-t)}{k^2} \frac{\varphi_i^2}{\varphi^2} - \frac{k-t}{2k} \frac{|D \varphi|^2}{\varphi^2} - \frac{k-t}{2k} - \frac{k+t}{2k} \frac{1}{\varphi^2} \bigg)f^{ -\frac{1}{k} } \right\}\\
=&~-k \varphi^{n+p-k} f^{\frac{k+1}{k} } \left\{ (f^{-\frac{1}{k} } )_{ii} +\frac{2(k-p-n)}{k}  (f^{-\frac{1}{k} })_i\frac{\varphi_i}{\varphi} - \metric{D f^{-\frac{1}{k}}}{\frac{D \varphi}{\varphi} }  \right. \\
&~\left. -\bigg( \frac{(k-p-n)(n+p)}{k^2} \frac{\varphi_i^2}{\varphi^2} - \frac{n+p}{2k} \frac{|D \varphi|^2}{\varphi^2} - \frac{n+p}{2k} - \frac{2k-p-n}{2k} \frac{1}{\varphi^2} \bigg)f^{ -\frac{1}{k} } \right\},
\end{align*}
where we used $t=k-p-n$ in the last equality. We complete the proof of Lemma \ref{lem-Faaphiaa-final-h}.
\end{proof}

\begin{proof}[Proof of Lemma \ref{deformation-lemma} (Deformation lemma)]
    It follows from Lemma \ref{lem-Faaphiaa-final-h} and Lemma \ref{lem-convexity of h->f}.
\end{proof}

\section{Full rank theorem}
We have proved an upper bound of the matrix $A[\varphi]$ of solutions to the horospherical $p$-Christoffel-Minkowski problem \eqref{s1:horo-p-CM-problem} in Lemma \ref{lem-phi C2}. Different from the prescribed $p$-shifted Weingarten curvature problem, there is no such lower bound of the matrix $A[\varphi]$ for the horospherical $p$-Christoffel-Minkowski problem in general. The main purpose of this section is to impose suitable structural assumptions on the prescribed function $f$ such that any (weakly) h-convex, even solution $\varphi$ to the horospherical $p$-Christoffel-Minkowski problem \eqref{s1:horo-p-CM-problem} is actually strictly h-convex, in another word, $A[\varphi]$ is of full rank. 
\begin{theorem}[Full rank theorem]\label{thm-full-rank}
	Let $f$ be a smooth, positive, even function on $\mathbb{S}^n$ satisfying Assumption \ref{Assump-f}. Let $\varphi$ be a $C^4$ smooth, even solution to equation \eqref{s1:horo-p-CM-problem} with positive semi-definite $A [\varphi]$ on $\mathbb{S}^n$. Then $A[\varphi]>0$ on $\mathbb{S}^n$.
\end{theorem}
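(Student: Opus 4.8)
The plan is to run the minimal-rank / strong-maximum-principle scheme of Caffarelli--Friedman and Guan--Ma \cite{CF85,GM03}, with Lemma~\ref{deformation-lemma} supplying the decisive differential inequality. Suppose, for contradiction, that $A[\varphi]$ fails to be positive definite somewhere, and set $\ell := \min_{\mathbb{S}^n}\operatorname{rank} A[\varphi] \le n-1$. Since \eqref{s1:horo-p-CM-problem} reads $S_{n-k}(A[\varphi]) = \binom{n}{n-k}\varphi^{p+k} f > 0$ pointwise, at every point at least $n-k$ eigenvalues of $A[\varphi]$ are positive, so $n-k \le \ell \le n-1$. Fix $z_0$ with $\operatorname{rank}(A[\varphi](z_0)) = \ell$; by continuity the $\ell$ largest eigenvalues of $A[\varphi]$ stay bounded below by a positive constant on a small ball $O \ni z_0$, whence $S_\ell(A[\varphi]) \ge C_0 > 0$ on $O$. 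Put $\phi := S_{\ell+1}(A[\varphi])$, so $\phi \ge 0$ on $O$ and $\phi(z_0) = 0$.

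Next I would rewrite the equation as $S_{n-k}(A[\varphi]) = \varphi^{n+p-(n-k)}\big(\binom{n}{n-k} f\big)$ and apply Lemma~\ref{deformation-lemma} with its ``$k$'' replaced by $n-k$ and its ``$f$'' by $\binom{n}{n-k} f$, which is legitimate because $n-k \le \ell \le n-1$. This yields on $O$ a differential inequality of the shape $\sum_{\alpha,\beta} S_{n-k}^{\alpha\beta}\phi_{\alpha\beta} \le (n-k)(n-\ell)\varphi^{p+k}\big(\binom{n}{n-k}f\big)^{\frac{n-k+1}{n-k}} S_\ell(A[\varphi])\,\tau(z) + C_1|D\phi| + C_2\phi$, where $\tau(z)$ is the largest eigenvalue of the matrix in \eqref{deformation-lemma-cond on f} after the same substitution. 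Granting for the moment that $\tau \le 0$ on $\mathbb{S}^n$, the first term on the right is $\le 0$. On $O$ we have $A[\varphi] \ge 0$ and $S_{n-k}(A[\varphi]) > 0$, hence $A[\varphi] \in \Gamma_{n-k}$, so $(S_{n-k}^{\alpha\beta})$ is positive definite and, the $\ell$ ``good'' eigenvalues being bounded below, locally uniformly elliptic. Thus $\phi \ge 0$ obeys a linear, locally uniformly elliptic inequality $\sum S_{n-k}^{\alpha\beta}\phi_{\alpha\beta} \le C_1|D\phi| + C_2\phi$ and vanishes at $z_0$; by the strong maximum principle $\phi \equiv 0$ on a neighborhood of $z_0$, i.e.\ $\operatorname{rank} A[\varphi] = \ell$ there. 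Hence $\{\operatorname{rank} A[\varphi] = \ell\}$ is open, and it is closed since it coincides with $\{S_{\ell+1}(A[\varphi]) = 0\}$ ($\ell$ being the minimal rank); connectedness of $\mathbb{S}^n$ then gives $\operatorname{rank} A[\varphi] \equiv \ell < n$.

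The core of the argument is the verification that $\tau \le 0$, i.e.\ that the matrix in \eqref{deformation-lemma-cond on f} is negative semi-definite on all of $\mathbb{S}^n$. Two a priori facts are at hand for every even solution with $A[\varphi] \ge 0$: first, $\varphi > 1$ on $\mathbb{S}^n$ (if $\varphi(z)=1$ somewhere then $\varphi \equiv 1$ and $A[\varphi] \equiv 0$, contradicting the equation, while evenness forces $\varphi \ge 1$); second, $|D\log\varphi| < 1$, which is \eqref{eq-Dphi<phi} and depends only on evenness and $A[\varphi] \ge 0$. After replacing $k$ by $n-k$, the matrix in \eqref{deformation-lemma-cond on f} is assembled from $D^2 f^{-1/(n-k)}$, $d\log\varphi$, $|D\log\varphi|^2$ and $\varphi^{-2}$; one bounds the $\varphi$-dependent coefficients using $1 < \varphi$ and $0 \le |D\log\varphi|^2 < 1$, and, according as $p \le -k$ or $p > -k$, either keeps the exponent $f^{-1/(n-k)}$ or passes to $f^{-1/(n+p)}$ through $f^{-1/(n+p)} = \big(f^{-1/(n-k)}\big)^{(n-k)/(n+p)}$, redistributing the resulting first-order terms by completing the square. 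In each of the five ranges of $p$ this reduces precisely to the corresponding inequality in Assumption~\ref{Assump-f}; in the delicate case $p = -n$ one further absorbs the $\varphi^{-2}$ contribution using the $C^0$ bound $\cosh(\log\varphi_{\max}) \le \varphi_{\min}$ from \eqref{eq-phi max < phi min}, which is why Assumption~\ref{Assump-f}(1) involves $\max f$. I expect this $p$-dependent matrix computation to be by far the hardest part.

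Finally I would rule out the surviving possibility $\operatorname{rank} A[\varphi] \equiv \ell < n$. By \cite[Prop.~2.1, Cor.~2.3]{LX22}, $\log\varphi$ is the horospherical support function of the h-convex body $\Omega = \bigcap_{z\in\mathbb{S}^n}\overline{B}_z(\log\varphi(z))$, which has nonempty interior because $\varphi > 1$ places $N = (0,1)$ in its interior, and the parametrization $X : \mathbb{S}^n \to \partial\Omega$ of \eqref{X(z)} is onto. But the rank of $dX$ equals $\operatorname{rank} A[\varphi] \equiv \ell < n$, so by Sard's theorem $X(\mathbb{S}^n) = \partial\Omega$ has zero $n$-dimensional Hausdorff measure, which is impossible for the boundary of a convex body with nonempty interior. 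This contradiction completes the proof that $A[\varphi] > 0$ on $\mathbb{S}^n$.
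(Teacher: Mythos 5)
Your constant--rank portion is essentially the paper's argument: the same reduction to the deformation lemma with $k\mapsto n-k$, the same identification of the needed \emph{a priori} facts ($\varphi>1$, $|D\log\varphi|<1$ from \eqref{eq-Dphi<phi}, and the $C^0$ bound \eqref{eq-phi max < phi min} to absorb the $\varphi^{-2}$ term when $p=-n$), and the same use of the strong maximum principle. Two remarks there. First, since $\ell$ is the \emph{minimal} rank and $A[\varphi]\ge 0$, one has $S_\ell(A[\varphi])\ge C_0>0$ on all of $\bbS^n$ by compactness, so the differential inequality holds globally and the minimum principle gives $\phi\equiv 0$ at once; your local openness/closedness detour is correct but unnecessary. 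Second, the verification that the matrix \eqref{deformation-lemma-cond on f} is negative semi-definite under Assumption \ref{Assump-f} is the content of the paper's Lemma \ref{lem-defor-lemma-weaker} (which cites \cite{LX22} for $p>-n$ and handles $p=-n$ directly); you have only sketched this step, though the sketch points at exactly the right ingredients, so I regard it as incomplete rather than wrong.

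Where you genuinely diverge is the endgame, and this is where there is a gap. The paper rules out $\operatorname{rank}A[\varphi]\equiv\ell<n$ by the shifted Minkowski formula \eqref{shifted-Mink}: $p_{\ell+1}(A[\varphi])\equiv 0$ forces $\int_{\bbS^n}\bigl(\tfrac{|D\varphi|^2}{2\varphi}+\tfrac12(\varphi-\varphi^{-1})\bigr)\varphi^{\ell-n}p_\ell(A[\varphi])\,d\sigma=0$, and since $p_\ell(A[\varphi])>0$ and $\varphi\ge 1$ this gives $\varphi\equiv 1$, hence $A[\varphi]\equiv 0$, contradicting the equation. You instead invoke a Sard/Hausdorff-measure argument on the parametrization $X$ of \eqref{X(z)}, resting on the assertion that $\operatorname{rank}dX=\operatorname{rank}A[\varphi]$. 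That identity is not established anywhere for merely weakly h-convex $\varphi$: the relations \eqref{s1:Jac-G} and \eqref{shifted principal curv-support} that tie $A[\varphi]$ to the geometry of $\cM$ are derived for strictly h-convex hypersurfaces, where $G$ is a diffeomorphism. To make your route rigorous you would have to differentiate \eqref{X(z)} directly and show $D_iX=\sum_j A_{ij}[\varphi]\,W_j$ for a suitable frame $W_j$ (so that the $n$-dimensional Jacobian of $X$ vanishes wherever $\det A[\varphi]=0$), and then apply the area formula to conclude $\mathcal H^n(X(\bbS^n))=0$, contradicting surjectivity of $X$ onto $\partial\Omega$. This is plausible and probably fillable, but as written it is an unproven claim carrying the whole contradiction; the paper's integral-identity argument avoids it entirely and is the cleaner path given the lemmas already available.
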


To prove Theorem \ref{thm-full-rank}, we first use the deformation lemma \ref{deformation-lemma} to show a differential inequality for solutions to \eqref{s1:horo-p-CM-problem} on $\bbS^n$ when $f$ satisfies Assumption \ref{Assump-f}.
\begin{lemma}\label{lem-defor-lemma-weaker}
    Suppose that $\varphi \in C^4(\bbS^n)$ is an even solution of the horospherical $p$-Christoffel-Minkowski problem \eqref{s1:horo-p-CM-problem} with $A[\varphi]$ positive semi-definite, and $p_{\ell} (A[\varphi]) \geq C_0$ for some integer $n-k \leq \ell \leq n-1$. Assume that $f$ is even on $\bbS^n$ and satisfies Assumption \ref{Assump-f}. Then there are constants $C_1$, $C_2$ depending only on $|| \varphi||_{C^3}$, $||f||_{C^2}$, $n$, $k$, $p$ and $C_0$ such that the differential inequality
	\eq{
		\sum_{\a, \b=1}^n p_{n-k}^{\a\b} \phi_{\a\b}(z) \leq   C_1 |D \phi (z)| +C_2 \phi (z)
	}
 holds on $\bbS^n$, where $\phi(z) = p_{\ell+1} (A[\varphi])$.
\end{lemma}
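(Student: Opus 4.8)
The idea is to feed \eqref{s1:horo-p-CM-problem} into the deformation lemma (Lemma \ref{deformation-lemma}) and then show that, under Assumption \ref{Assump-f} together with the a priori facts available for even weakly h-convex $\varphi$, the eigenvalue $\tau(z)$ appearing in Lemma \ref{deformation-lemma} is nonpositive on all of $\bbS^n$; the leading term in that lemma's conclusion can then simply be discarded.

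\emph{Step 1 (reduction).} Rewrite \eqref{s1:horo-p-CM-problem} as $S_{n-k}(A[\varphi])=\varphi^{\,n+p-(n-k)}\binom{n}{n-k}f$, which is \eqref{eq-in deformation lemma} with the role of ``$k$'' played by $n-k$ and prescribed function $\binom{n}{n-k}f$; the hypotheses $A[\varphi]\ge 0$ and $S_{\ell}(A[\varphi])=\binom{n}{\ell}p_{\ell}(A[\varphi])\ge \binom{n}{\ell}C_0$ (for $n-k\le \ell\le n-1$) hold, and $p_{n-k}^{\alpha\beta},p_{\ell+1}$ differ from $S_{n-k}^{\alpha\beta},S_{\ell+1}$ by fixed positive constants absorbed into $C_1,C_2$. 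Hence Lemma \ref{deformation-lemma} gives
\[
\sum_{\alpha,\beta}p_{n-k}^{\alpha\beta}\phi_{\alpha\beta}(z)\le c\,\varphi^{\,n+p-k}f^{\frac{n-k+1}{n-k}}S_{\ell}(A[\varphi])\,\tau(z)+C_1|D\phi|+C_2\phi,\qquad c>0,
\]
with $\tau(z)$ the largest eigenvalue of the tensor \eqref{deformation-lemma-cond on f} after these substitutions; since every term of \eqref{deformation-lemma-cond on f} is linear in derivatives of $f^{-1/(n-k)}$, the binomial rescaling only multiplies that tensor by a positive constant and does not affect the sign of $\tau$. So it suffices to prove that, with $W:=f^{-1/(n-k)}$ and using $(n-k)-p-n=-(p+k)$, $2(n-k)-p-n=n-2k-p$, the tensor
\[
\mathcal E:=D^2W-\tfrac{2(p+k)}{n-k}\,dW\,d\log\varphi-\langle DW,D\log\varphi\rangle\sigma+\tfrac{(p+k)(n+p)}{(n-k)^2}W(d\log\varphi)^2+\Big(\tfrac{n+p}{2(n-k)}|D\log\varphi|^2+\tfrac{n+p}{2(n-k)}+\tfrac{n-2k-p}{2(n-k)\varphi^2}\Big)W\sigma
\]
is positive semi-definite on $\bbS^n$ (this is $-(\text{tensor }\eqref{deformation-lemma-cond on f})$ up to a positive factor, equivalently the tensor on the right of Lemma \ref{lem-convexity of h->f} up to the negative factor there).

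\emph{Step 2 (a priori data).} Because $\varphi$ is even with $A[\varphi]\ge 0$ and, thanks to $p_{\ell}(A[\varphi])\ge C_0>0$, is not identically $1$, we have $\varphi>1$ and $|D\log\varphi|<1$ on $\bbS^n$ by \eqref{eq-phi max < phi min}--\eqref{eq-Dphi<phi}. In the borderline case $p=-n$ we also evaluate \eqref{s1:horo-p-CM-problem} at a minimum point of $\varphi$, where $D^2\varphi\ge 0$ and hence $p_{n-k}(A[\varphi])\ge\big(\tfrac12(\varphi_{\min}-\varphi_{\min}^{-1})\big)^{n-k}$; comparing with $\varphi_{\min}^{-(n-k)}f_{\max}$ gives $\varphi_{\min}^2\le 1+2(\max f)^{1/(n-k)}$, and combining with $\cosh(\log\varphi_{\max})\le\varphi_{\min}$ from \eqref{eq-phi max < phi min} yields $\varphi_{\max}^2\le 2+8(\max f)^{1/(n-k)}$.

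\emph{Step 3 (the main estimate: $\mathcal E\ge 0$).} I would split according to the five ranges of $p$ in Assumption \ref{Assump-f}, controlling the cross terms with the two nonnegative ``$d\log\varphi$-quadratic'' terms. For $p\ge -k$ (so $p+k\ge 0$, $n+p>0$): completing the square in $d\log\varphi$ gives $-\tfrac{2(p+k)}{n-k}dW\,d\log\varphi+\tfrac{(p+k)(n+p)}{(n-k)^2}W(d\log\varphi)^2\ge-\tfrac{p+k}{n+p}\tfrac{dW\otimes dW}{W}$, a weighted Young inequality gives $-\langle DW,D\log\varphi\rangle\sigma+\tfrac{n+p}{2(n-k)}|D\log\varphi|^2W\sigma\ge-\tfrac{n-k}{2(n+p)}\tfrac{|DW|^2}{W}\sigma$, and the key algebraic identity (the first-order remainder cancels exactly)
\[
D^2W-\tfrac{p+k}{n+p}\tfrac{dW\otimes dW}{W}=\tfrac{n+p}{n-k}\big(f^{-\frac1{n+p}}\big)^{\frac{p+k}{n-k}}D^2\big(f^{-\frac1{n+p}}\big)
\]
converts the bound into $\mathcal E\ge\tfrac{n+p}{n-k}(f^{-1/(n+p)})^{(p+k)/(n-k)}\big[D^2W'-\tfrac12\tfrac{|DW'|^2}{W'}\sigma+\tfrac12 W'\sigma+\tfrac{n-2k-p}{2(n+p)\varphi^2}W'\sigma\big]$ with $W':=f^{-1/(n+p)}$; dropping the last term when $-k\le p\le n-2k$, or using $1/\varphi^2<1$ to lower-bound it by $\tfrac{n-2k-p}{2(n+p)}W'\sigma$ when $p>n-2k$ (which makes the $W'\sigma$-coefficient $\tfrac{n-k}{n+p}$), this is $\ge 0$ precisely by Assumption \ref{Assump-f}(4)--(5). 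For $p<-k$ the square has the wrong sign, so instead bound all cross terms by $|DW|\,|D\log\varphi|\,\sigma$, with total coefficient $-\tfrac{n-3k-2p}{n-k}$, and combine the two $d\log\varphi$-quadratic terms into the $|D\log\varphi|^2W\sigma$-budget $\tfrac{(n+p)(n+k+2p)}{2(n-k)^2}$. When $-n<p\le-\tfrac{n+k}{2}$ this budget is nonpositive, so one uses $|D\log\varphi|<1$ and reaches $\mathcal E\ge D^2W-\tfrac{n-3k-2p}{n-k}|DW|\sigma+\big(\tfrac{n+p}{n-k}\big)^2W\sigma\ge 0$ by Assumption \ref{Assump-f}(2); when $-\tfrac{n+k}{2}<p<-k$ the budget is positive and an optimally tuned Young split of $\tfrac{n-3k-2p}{n-k}|DW||D\log\varphi|$ against it produces $\mathcal E\ge D^2W-\tfrac{(n-3k-2p)^2}{2(n+p)(n+k+2p)}\tfrac{|DW|^2}{W}\sigma+\tfrac{n+p}{2(n-k)}W\sigma\ge 0$ by Assumption \ref{Assump-f}(3); and when $p=-n$ all the natural $W\sigma$ terms vanish except $\tfrac1{\varphi^2}W\sigma$, which by Step 2 is $\ge\tfrac1{2+8(\max f)^{1/(n-k)}}W\sigma$, giving $\mathcal E\ge D^2W-3|DW|\sigma+\tfrac1{2+8(\max f)^{1/(n-k)}}W\sigma\ge 0$ by Assumption \ref{Assump-f}(1). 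Thus $\mathcal E\ge 0$, hence $\tau\le 0$, and the asserted differential inequality follows from Step 1.

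\textbf{Expected main obstacle.} The delicate part is Step 3: the constants in Assumption \ref{Assump-f} are sharp, so in each $p$-regime one must pick exactly the right device — completing the square, the ``rescale the unknown to $f^{-1/(n+p)}$'' identity, or a Young split tuned to the available $|D\log\varphi|^2$-budget — and track the cross-term coefficients carefully, noting that $dW\otimes dW$ and $\sigma$ enter as genuinely different tensors (so a direction-by-direction inspection is needed). The case $p=-n$ stands apart, because there $n+p=0$ destroys all the natural positive $W\sigma$ contributions and positivity has to be imported from the a priori bound on $\varphi_{\max}$ extracted from the equation.
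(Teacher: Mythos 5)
Your proposal is correct and follows the same route as the paper: reduce via the deformation lemma (with $k\mapsto n-k$ and $f\mapsto\binom{n}{n-k}f$) to showing the tensor $\mathcal E$ is positive semi-definite, and in the case $p=-n$ import the needed positivity from the bound $\varphi_{\max}^2<2+8(\max f)^{1/(n-k)}$ obtained from the equation together with $\cosh(\log\varphi_{\max})\le\varphi_{\min}$. The only difference is that for the regimes $p>-n$ the paper simply cites \cite[Lem.~7.6]{LX22} (and \cite[Lem.~4.3]{Che24} for $p=-n$, $k=n-1$) for the positive semi-definiteness of $\mathcal E$, whereas you verify it directly; your Step 3 computations (the exact cancellation in the $f^{-1/(n+p)}$ substitution for $p\ge -k$, the cross-term coefficient $\tfrac{n-3k-2p}{n-k}$ and the $|D\log\varphi|^2$-budget $\tfrac{(n+p)(n+k+2p)}{2(n-k)^2}$ for $p<-k$) check out and reproduce exactly the constants in Assumption \ref{Assump-f}.
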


\begin{proof}
    Comparing \eqref{s1:horo-p-CM-problem} with equation \eqref{eq-in deformation lemma} in the deformation lemma \ref{deformation-lemma}, it suffices to prove that
    \eq{\label{deformation-cond on f-in full rank}
    	&~D^2 (f^{-\frac{1}{n-k} } ) -\frac{2(k+p)}{n-k} d(f^{-\frac{1}{n-k} }) d(\log \varphi) - \metric{D f^{-\frac{1}{n-k} }}{D \log \varphi} \sigma\\
		&~+  \frac{(k+p)(n+p)}{(n-k)^2} f^{ -\frac{1}{n-k} } (d \log \varphi)^2
		+ \(\frac{n+p}{2(n-k)} \frac{|D \varphi|^2}{\varphi^2}  + \frac{n+p}{2(n-k)} + \frac{n-2k-p}{2(n-k)} \frac{1}{\varphi^2} \)f^{ -\frac{1}{n-k} } \sigma
    }
    is positive semi-definite on $\bbS^n$.
    
	When $p>-n$, the matrix \eqref{deformation-cond on f-in full rank} is exactly  \cite[Eq. (7.39)]{LX22} (up to $\varphi^2>0$), and we can use \cite[Lem. 7.6 \& Assump. 7.1]{LX22} to find a sufficient condition on $f$ so that \eqref{deformation-cond on f-in full rank} is positive semi-definite for even solution $\varphi$, which are (2), (3), (4) and (5) of Assumption \ref{Assump-f}. When $p=-n$ and $k=n-1$, such condition of $f$ was provided in  \cite[Lem. 4.3]{Che24}, and it is the case $k=n-1$ of (1) in Assumption \ref{Assump-f}. 
 
    Here, we only need to deal with the case $p=-n$ and $1 \leq k \leq n-2$ in Assumption \ref{Assump-f}. In this case, $q=\frac{n+p}{n-k}-1=-1$. By the $C^0$-estimate of equation \eqref{s1:horo-p-CM-problem} in Lemma \ref{lem-C0-est}, we have
	\eq{
		\xi_{-1} (\varphi_{\min} ) \geq 
    \(f_{\max} \)^{- \frac{1}{n-k}}.
	}
	Since $\xi_{-1} (t) = 2t^{-1 } \( t- \frac{1}{t} \)^{-1}$, we have
    \eq{
		\varphi_{\min} \leq  \( 1+ 2 \( f_{\max} \)^{\frac{1}{n-k} } \)^{\frac{1}{2} }.
	}
	Then we can apply \eqref{eq-phi max < phi min} to get
 	\eq{
		\varphi_{\max}^2 \leq \bigg(
		\bigg( 1+ 2 (f_{\max})^{\frac{1}{n-k} } \bigg)^{\frac{1}{2} }
		+ \bigg( 2 (f_{\max})^{\frac{1}{n-k} }  \bigg)^{\frac{1}{2} }
		\bigg)^2 
		< 2+ 8  ( f_{\max})^{\frac{1}{n-k}}.
	}
	This together with $p=-n$, \eqref{eq-Dphi<phi} and (1) in Assumption \ref{Assump-f} gives
 	\eq{
	&~D^2(f^{-\frac{1}{n-k} } ) +2 d(f^{-\frac{1}{n-k} }) d\log \varphi - \metric{D f^{-\frac{1}{n-k} }}{D \log \varphi} \sigma+\frac{1}{\varphi^2}f^{ -\frac{1}{n-k} }  \sigma\\
		>&~D^2(f^{-\frac{1}{n-k} } )-3 |D f^{-\frac{1}{n-k} }| \sigma + \frac{f^{-\frac{1}{n-k}}}{2 + 8 (f_{\max})^\frac{1}{n-k} }\sigma \geq 0 .
	}

	Consequently, \eqref{deformation-cond on f-in full rank} is positive semi-definite if $p=-n$ and $f$ satisfies (1) in Assumption \ref{Assump-f}.
	This completes the proof of Lemma \ref{lem-defor-lemma-weaker}.
\end{proof}

Using the shifted Minkowski formula for closed hypersurfaces in $\mathbb{H}^{n+1}$ (see \cite{HLW22}), Chen \cite{Che24} proved the following formula for smooth positive function $\varphi$ on $\mathbb{S}^n$ with $A[\varphi]>0$, which can be easily extended to the case $A[\varphi] \geq 0$.
\begin{lemma}
	Let $\varphi$ be a smooth, positive function on $\mathbb{S}^n$ with $A[\varphi] \geq 0$. For $\ell=0,1, \ldots, n-1$, there holds
	\eq{\label{shifted-Mink}
	&\int_{\mathbb{S}^n} \varphi^{\ell-n} p_{\ell+1} (A[\varphi]) d\sigma
	= \int_{\mathbb{S}^n} \(\frac{|D \varphi|^2}{2\varphi} +\frac{1}{2} \( \varphi -\varphi^{-1} \)  \) \varphi^{\ell-n} p_\ell \( A[\varphi]  \) d\sigma,  
}
where $d\sigma$ is the area element of $\mathbb{S}^n$.
\end{lemma}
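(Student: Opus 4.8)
The plan is to establish the identity first when $A[\varphi]>0$, by transplanting it onto the associated strictly h-convex hypersurface $\cM\subset\bbH^{n+1}$ via the horospherical Gauss map, where it becomes the shifted Minkowski formula of \cite{HLW22}; and then to recover the semi-definite case $A[\varphi]\ge 0$ by a one-parameter scaling approximation.

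\emph{The case $A[\varphi]>0$.} Here $\log\varphi$ is the horospherical support function of a smooth, closed, strictly h-convex hypersurface $\cM$ parametrized by $X$ as in \eqref{X(z)}, with horospherical Gauss map $G=X^{-1}$. From \eqref{X(z)} the $(n+2)$-nd coordinate of $X$ equals $\psi$ (see \eqref{s5:psi}), so $(\cosh r)\circ X=\psi$; and since $X-\nu=\varphi^{-1}(z,1)$ by definition of $G$, the support function $u_r:=\metric{V}{\nu}$ of $\cM$ relative to the conformal Killing field $V=\bar\nabla\cosh r=\sinh r\,\partial_r$ satisfies $u_r\circ X=\metric{\bar\nabla\cosh r}{\nu}=\nu(x_{n+2})=\psi-\varphi^{-1}$, which is precisely the weight $\tfrac{|D\varphi|^{2}}{2\varphi}+\tfrac12(\varphi-\varphi^{-1})$ in \eqref{shifted-Mink}; in particular $(\cosh r)\circ X-u_r\circ X=\varphi^{-1}$. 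By \eqref{shifted principal curv-support} the shifted principal curvatures of $\cM$ are $\tilde\kappa_i=(\varphi\lambda_i)^{-1}$, with $\lambda_i$ the eigenvalues of $A[\varphi]$, so $p_m(\tilde\kappa)\circ X=\varphi^{-m}\,p_{n-m}(A[\varphi])\big/p_n(A[\varphi])$ for $0\le m\le n$; also $X^{*}(d\mu_{\cM})=\det A[\varphi]\,d\sigma=p_n(A[\varphi])\,d\sigma$ by \eqref{s1:Jac-G}. Substituting these into \eqref{shifted-Mink} and setting $m:=n-\ell-1$, both sides become, after simplification of the powers of $\varphi$,
\[
\int_{\cM}\bigl(\cosh r-u_r\bigr)\,p_m(\tilde\kappa)\,d\mu=\int_{\cM}u_r\,p_{m+1}(\tilde\kappa)\,d\mu,\qquad m=0,1,\dots,n-1 .
\]
This is the shifted Minkowski formula for closed h-convex hypersurfaces in $\bbH^{n+1}$ from \cite{HLW22}; alternatively it follows from the classical Minkowski formulas $\int_{\cM}\cosh r\,p_j(\kappa)\,d\mu=\int_{\cM}u_r\,p_{j+1}(\kappa)\,d\mu$ ($0\le j\le n-1$)---proved by integrating $\divv_{\cM}$ of the contraction of a divergence-free Newton tensor of $\kappa$ with $V^{\top}$, the tangential part of $V$ along $\cM$, and using $\bar\nabla V=\cosh r\,\bar g$---combined with the identity $p_j(\kappa)=\sum_{i=0}^{j}\binom{j}{i}p_i(\tilde\kappa)$ (valid since $\kappa=\tilde\kappa+\mathbf 1$), via a short induction on $m$.

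\emph{The case $A[\varphi]\ge 0$.} I would argue by approximation. A direct computation from \eqref{def-Aij} gives the scaling identity $A[c\varphi]=c\,A[\varphi]+\tfrac{c^{2}-1}{2c\varphi}\,\sigma$ for every constant $c>0$; hence for $c>1$ the function $\varphi_c:=c\varphi$ satisfies $A[\varphi_c]\ge\tfrac{c^{2}-1}{2c\varphi}\,\sigma>0$, so the previous step applies to $\varphi_c$ and yields \eqref{shifted-Mink} with $\varphi$ replaced by $\varphi_c$. Since each side of \eqref{shifted-Mink} is a polynomial expression in $(\varphi_c,D\varphi_c,D^{2}\varphi_c)$ integrated over the compact manifold $\bbS^n$, letting $c\to1^{+}$ (so $\varphi_c\to\varphi$ in $C^{2}$) gives \eqref{shifted-Mink} for $\varphi$. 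The only genuinely non-routine point is the bookkeeping in the first step---identifying $\cosh r$, $u_r$, $\tilde\kappa_i$ with $\psi$, $\psi-\varphi^{-1}$, $(\varphi\lambda_i)^{-1}$, matching the weight in \eqref{shifted-Mink} with $u_r$, and keeping the normalizations of $p_j$ versus $S_j$ and of the Jacobian factor $p_n(A[\varphi])$ consistent; once that is pinned down the underlying Minkowski formula is classical and the semi-definite extension is immediate.
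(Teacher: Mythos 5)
Your proposal is correct and follows essentially the same route as the paper: for $A[\varphi]>0$ it pulls the identity back to the shifted Minkowski formula of \cite{HLW22} on the associated h-convex hypersurface via the horospherical Gauss map (your identifications of $\cosh r$, $\tilde u$, $\tilde\kappa_i$ and the Jacobian agree with the formulas the paper quotes from \cite{LX22}), and then handles $A[\varphi]\ge 0$ by approximation. The only cosmetic difference is that you approximate by $c\varphi$ with $c\to 1^{+}$ using the scaling identity $A[c\varphi]=cA[\varphi]+\tfrac{c^{2}-1}{2c\varphi}\sigma$, whereas the paper uses $\varphi+\varepsilon$ with $A[\varphi+\varepsilon]\ge A[\varphi]+\tfrac{\varepsilon}{2}\sigma$; both work equally well.
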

\begin{proof}
    For convenience of the reader, we give the proof here. We first consider the case for smooth positive function $\varphi$ on $\bbS^n$ with $A[\varphi]>0$. In this case, the strictly h-convex hypersurface $\cM$ in $\bbH^{n+1}$ can be recovered from $\varphi$ via the embedding \eqref{X(z)}, which is the inverse of the horospherical Gauss map $G:\bbS^n\ra \cM$. Recall that the shifted Minkowski formula (\cite[Lem. 2.6]{HLW22}) for closed hypersurfaces in hyperbolic space $\bbH^{n+1}$:
    \eq{\label{shifted-Minkowski-formula}
    \int_{\cM} (\cosh r-\tilde u) p_m(\tilde\kappa) d\mu=\int_{\cM} \tilde u p_{m+1}(\tilde\kappa) d\mu, \quad m=0,1,\ldots,n-1,
    }
    where $\tilde\kappa=(\kappa_1-1,\ldots,\kappa_n-1)$ are the shifted principal curvatures, and $\tilde u=\bar g(\sinh r\partial_r,\nu)$ is the support function of $\cM$.
    To transform the shifted Minkowski formula \eqref{shifted-Minkowski-formula} into the formula \eqref{shifted-Mink}, we have (\cite[Lem. 3.9]{LX22})
    \eq{
    \cosh r\circ G^{-1} &=\frac{|D\varphi|^2}{2\varphi}+\frac{1}{2}(\varphi+\varphi^{-1}), \\
    \tilde u \circ G^{-1}&=\frac{|D\varphi|^2}{2\varphi}+\frac{1}{2}(\varphi-\varphi^{-1}).
    }
    Then we get
    \eq{
    (\cosh r-\tilde u)\circ G^{-1}=\varphi^{-1}.
    }
    By \cite[Cor. 2.1]{LX22}, we have 
    \eq{
    d\mu \circ G^{-1}=p_n(A[\varphi]) d\sigma.
    }
    Note that the matrix $\(h_i{}^j(G^{-1}(z))-\delta_i{}^j\)$ is the inverse matrix of $(\varphi(z)\sigma^{jl}A_{li}[\varphi(z)])$, then the eigenvalues $\tilde\kappa$ of $\(h_i{}^j(G^{-1}(z))-\delta_i{}^j\)$ are the reciprocal of the corresponding eigenvalues of $(\varphi(z)\sigma^{jl}A_{li}[\varphi(z)])$. 
    So we obtain
    \eq{
    (p_{n}(\tilde \kappa)d\mu)\circ G^{-1} = \varphi^{-n} d\sigma,
    }
    and thus
    \eq{
    \varphi^{\ell-n}p_{\ell+1}(A[\varphi])d\sigma&=\((\cosh r-\tilde u)p_{n-\ell-1}(\tilde \kappa) d\mu\)\circ G^{-1}, \\
    \(\frac{|D \varphi|^2}{2\varphi} +\frac{1}{2} \( \varphi -\varphi^{-1} \)  \) \varphi^{\ell-n} p_\ell \( A[\varphi]  \)d\sigma &=\(\tilde u p_{n-\ell}(\tilde \kappa)d\mu\)\circ G^{-1}.
    }
    Combining these with the shifted Minkowski formula \eqref{shifted-Minkowski-formula}, we obtain the formula \eqref{shifted-Mink}.
   
    For the case for smooth positive function $\varphi$ on $\bbS^n$ with $A[\varphi]\geq 0$, we approximate $\varphi$ by $\varphi+\varepsilon$ with $\varepsilon>0$. Note that $A[\varphi] \geq 0$ implies that $A[\varphi+\varepsilon]>0$ for any $\varepsilon>0$, since
    \eq{
    A_{ij}[\varphi+\varepsilon] =&~
    (\varphi+\varepsilon)_{ij} - \frac{1}{2} \frac{|D (\varphi+\varepsilon)|^2}{\varphi+\varepsilon} \sigma_{ij} + \frac{1}{2} \(\varphi+\varepsilon - (\varphi+\varepsilon)^{-1} \) \sigma_{ij}\\
    \geq&~ \varphi_{ij} - \frac{1}{2} \frac{|D \varphi|^2}{\varphi} \sigma_{ij} +\frac{1}{2} (\varphi - \varphi^{-1}) \sigma_{ij} +\frac{\varepsilon}{2}\sigma_{ij}\\
    =&~ A_{ij} [\varphi] + \frac{\varepsilon}{2} \sigma_{ij}.
    }Hence, one can prove \eqref{shifted-Mink} for $\varphi +\varepsilon$  first and then let $\varepsilon \to 0^+$ to obtain \eqref{shifted-Mink} for $\varphi$ with $A[\varphi]\geq 0$. 
\end{proof}

Now, we prove Theorem \ref{thm-full-rank}.

\begin{proof}[Proof of Theorem \ref{thm-full-rank} (Full rank theorem)]
	If $A[\varphi(z)]$ is not of full rank at some $z_0  \in \mathbb{S}^n$, then there is $n-k \leq \ell \leq n-1$ such that $p_{\ell} (A[\varphi]) >0$ on $\mathbb{S}^n$, and $\phi (z_0) =S_{\ell+1} (A[\varphi(z_0)]) =0$. Then by Lemma \ref{deformation-lemma}, we have
	\eq{
	\sum_{\a, \b=1}^n p_{n-k}^{\a\b} (z) \phi_{\a\b} (z) \leq C_1 |D \phi(z)| +C_2 \phi(z).
	}
	The strong minimum principle implies $\phi = p_{\ell+1} (A[\varphi]) \equiv 0$. Then by formula \eqref{shifted-Mink}, we have $\varphi \equiv 1$. Hence $A[\varphi]=0$, which is a contradiction to equation \eqref{s1:horo-p-CM-problem}.
\end{proof}

\section{Proofs of Theorems \ref{thm-horo-p-CM-problem}, \ref{thm-Lp-Weingarten-eq-hyperbolic} and \ref{thm-horo-p-Minkowski}}
We first recall the following uniqueness result.
\begin{lemma}\label{s6:lem-uniqueness}
\noindent 
\begin{enumerate}
    \item (\cite[Prop. 8.1 \& Thm. 8.1]{LX22}) Let $n\geq 1$. For any $p \geq -n$, the even, h-convex solutions to equation
	\eq{ \label{s6:general-eq}
		F(A[\varphi]) =\varphi^{\frac{n+p}{n-k}-1} \gamma^{\frac{1}{n-k}}
	}
	are constant, where $\gamma>0$ is a constant, and $F=p_{n-k}^{1/(n-k)}$ or $F=(p_n/p_k)^{1/(n-k)}$.
     \item (\cite[Thm. 1.2]{LW24}) Let $n=1$. For any $-7\leq p<-1$, the even, h-convex solutions to equation
	\eq{ 
		\varphi_{\theta\theta}-\frac{1}{2}\frac{\varphi_\theta^2}{\varphi}+\frac{\varphi-\varphi^{-1}}{2} =\varphi^{p} \gamma
	}
	are constant, where $\gamma>0$.
\end{enumerate}
   
\end{lemma}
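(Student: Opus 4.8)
The plan is to recall that part (1) is \cite[Prop.~8.1 \& Thm.~8.1]{LX22} and part (2) is \cite[Thm.~1.2]{LW24}, and to indicate how one would reprove them, since in both cases the assertion is that the only even, h-convex solution is a geodesic sphere centered at the north pole $N=(0,1)$, for which $\varphi$ is constant. First I would dispose of the degenerate possibility: an even, h-convex $\varphi$ satisfies $\varphi\geq 1$, and if $\varphi(z_0)=1$ at some point then $\varphi\equiv 1$ and $A[\varphi]\equiv 0$, which is incompatible with the positivity of $F(A[\varphi])=\varphi^{q}\,\gamma^{1/(n-k)}$; hence $\varphi>1$ everywhere. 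Once $\varphi$ is shown to be constant, inserting it into \eqref{s6:general-eq} gives $\tfrac12(\varphi-\varphi^{-1})=\varphi^{q}\,\gamma^{1/(n-k)}$ with $q=\tfrac{n+p}{n-k}-1\geq-1$; since $t\mapsto\tfrac12(t^{1-q}-t^{-1-q})$ is strictly increasing on $(1,\infty)$ whenever $q\geq-1$, this determines $\varphi$ uniquely in terms of $\gamma$ (only a restricted range of $\gamma$ being admissible once $q\geq1$, i.e. $p\geq n-2k$, consistently with Assumption~\ref{Assump-barrier-f}).

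For part (1), both choices of $F$ put \eqref{s6:general-eq} in the form $\widetilde F(A[\varphi])=\gamma\,\varphi^{p+k}$ with $\widetilde F\in\{p_{n-k},\,p_n/p_k\}$, whose right-hand side is a power of $\varphi$. I would feed this into the family of shifted Minkowski identities \eqref{shifted-Mink}, and combine them --- via the Newton--MacLaurin inequalities for the eigenvalues of $A[\varphi]$ --- into an integral inequality for $\varphi$ whose equality case is precisely $A[\varphi]=c(z)\sigma$ pointwise, in the spirit of the classical Minkowski--Hsiung rigidity as carried out in \cite{LX22}. Equality then says the hypersurface $\mathcal M$ recovered from $\varphi$ via \eqref{X(z)} is umbilic, hence a geodesic sphere; evenness of $\varphi$ forces this sphere to be centered at $N$, so $\varphi$ is constant, and the first paragraph finishes part (1).

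For part (2) I would substitute $v=\varphi^{1/2}$, which turns the equation on $\mathbb{S}^1$ into the autonomous ODE $v_{\theta\theta}=\tfrac{\gamma}{2}v^{2p-1}-\tfrac14 v+\tfrac14 v^{-3}$, with first integral $\tfrac12 v_\theta^2+W(v)=C$, where $W'(v)=-\tfrac{\gamma}{2}v^{2p-1}+\tfrac14 v-\tfrac14 v^{-3}$. A short computation shows that $W$ has a unique critical point $v_0>1$, which is a nondegenerate minimum (hence a center), and that $W''(v_0)$ lies strictly between $1$ and $\tfrac{1-p}{2}$ when $p<-1$; thus for $-7\leq p<-1$ one has $W''(v_0)\in(1,4)$, so that every closed orbit around $v_0$ has minimal period in $(\pi,2\pi)$ --- the upper bound because $W''(v_0)>1$ pushes the period below $2\pi$, the lower bound because $W''(v_0)<4$, once one controls the monotonicity (or at least the supremum) of the period function in the energy $C$. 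Since no period in $(\pi,2\pi)$ equals $2\pi/m$ for an integer $m\geq 1$, the only $2\pi$-periodic (hence even) solution is the equilibrium $v\equiv v_0$, i.e. $\varphi$ is constant; and the endpoints $p=-1$ (where $W''(v_0)=1$, so $\cos\theta$ lies in the kernel of the linearization) and $p=-7$ (below which $W''(v_0)$ can exceed $4$ and a $\pi$-periodic solution bifurcates) show the range is sharp, in line with Remark~\ref{remark-invertible}.

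I expect the main obstacle, in both parts, to be the rigidity step away from the autonomous case $p=-n$. For $p=-n$ the prescribed data is a genuine constant and the classical Alexandrov/Heintze--Karcher mechanism (part (1)) or an elementary first-integral computation (part (2)) applies directly; for $p>-n$ the factor $\varphi^{p+k}$ couples the curvature to the support function, and one must verify that the Minkowski-type integral inequality (part (1)) and the period function of the ODE (part (2)) continue to behave as required. This is exactly the content of \cite{LX22} and \cite{LW24}, and the sharpness of the range $-7\leq p<-1$ shows that it genuinely breaks outside it.
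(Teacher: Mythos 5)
This lemma is quoted verbatim from \cite[Prop.~8.1 \& Thm.~8.1]{LX22} and \cite[Thm.~1.2]{LW24}; the paper supplies no proof of its own, so there is nothing internal to compare your argument against. Your sketch correctly identifies the sources and the right circle of ideas: for part (1), a Minkowski-formula/Newton--MacLaurin rigidity argument forcing umbilicity, with evenness pinning the resulting geodesic sphere at $N$; for part (2), the substitution $v=\varphi^{1/2}$, the first integral, and the computation $W''(v_0)=\tfrac{1-p}{2}+\tfrac{1+p}{2}v_0^{-4}\in\bigl(1,\tfrac{1-p}{2}\bigr)$, which is exactly the quantity appearing in the paper's linearized operator $L_{c_0}$ and which explains the threshold $p=-7$. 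However, as a proof both halves are incomplete at precisely the hard steps, and you say so yourself: in part (1) the passage from the integral identities to the pointwise equality case $A[\varphi]=c\,\sigma$ is asserted, not carried out, and it is exactly here that the coupling $\varphi^{p+k}$ for $p>-n$ causes the difficulty; in part (2) the bound $W''(v_0)\in(1,4)$ only controls the period of infinitesimal oscillations, and ruling out non-constant $2\pi$-periodic orbits requires global control of the period function in the energy, which is the actual content of \cite{LW24}.

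Two smaller points need correction. First, the map $t\mapsto\tfrac12(t^{1-q}-t^{-1-q})$ is \emph{not} strictly increasing on $(1,\infty)$ for $q>1$: its derivative is $\tfrac12 t^{-q}\bigl((1-q)+(1+q)t^{-2}\bigr)$, which changes sign at $t=\sqrt{\tfrac{q+1}{q-1}}$, and the paper explicitly records (right after the lemma) that for $q>1$ and suitable $\gamma$ there are exactly \emph{two} distinct constant solutions. The lemma only asserts that solutions are constant, not that the constant is unique, so this does not affect the statement, but it does matter for the degree argument in Section 6, where $\mathcal{O}_R$ is deliberately restricted to $w>\sqrt{\tfrac{q+1}{q-1}}$ to isolate one of the two constants. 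Second, your remark that $p=-1$ is a sharp endpoint because $\cos\theta$ lies in the kernel of the linearization is misleading: $\cos\theta$ is odd on $\bbS^1$, so it is invisible in the even category, and indeed part (1) of the lemma covers $p\geq -1$ when $n=1$; the genuine obstruction at the other end is the even eigenfunction $\cos 2\theta$ (eigenvalue $-4$), which is what makes $p=-7$ the critical value, consistent with Remark \ref{remark-invertible}.
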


Recall that the function $\xi_q: (1, \infty) \to (0, \infty)$ defined by \eqref{def-zeta-q}, 
\eq{
\xi_q(t)=2 t^{q} (t-t^{-1})^{-1}, \quad q=\frac{n+p}{n-k}-1.
}
Assume that $\varphi\equiv c_0$ is a constant solution to \eqref{s6:general-eq}. Then $A[c_0]=\frac{1}{2}(c_0-c_0^{-1})$ and
\eq{ \label{s2:c0-eq}
\xi_q(c_0)=\gamma^{-\frac{1}{n-k}} . 
}
By the calculation in Lemma \ref{lem-C0-est} and Lemma \ref{s6:lem-uniqueness}, we have 
\begin{itemize}
		\item Either $-1\leq q<1$ (that is, $-n \leq p<n-2k$, when $n\geq 2$ and $0\leq k\leq n-1$), or $-7\leq q<1$ (that is, $-7 \leq p<1$, when $n=1$ and $k=0$): There exists a unique solution to \eqref{s2:c0-eq} for any $\gamma>0$;
        \item $q=1$ (that is, $p=n-2k$, when $n\geq 1$ and $0\leq k\leq n-1$): There exists a unique solution to \eqref{s2:c0-eq} for any $\gamma^{- \frac{1}{n-k}}>2$;
            \item $q>1$ (that is, $p>n-2k$, when $n\geq 1$ and $0\leq k\leq n-1$): If $\gamma^{- \frac{1}{n-k}}  = \frac{(q+1)^{\frac{q+1}{2}}}{(q-1)^{\frac{q-1}{2}}}$, then there exists a unique solution to \eqref{s2:c0-eq};
            If $\gamma^{- \frac{1}{n-k}} > \frac{(q+1)^{\frac{q+1}{2}}}{(q-1)^{\frac{q-1}{2}}} $, then there exist exactly two distinct solutions to \eqref{s2:c0-eq}.
\end{itemize}

\begin{remark}
	When $F = p_{n-k}^{1/(n-k)}$, equation $\varphi^{1-\frac{n+p}{n-k}} F( A[\varphi]) = \gamma^{ \frac{1}{n-k} }$ is equivalent to 
	\begin{align*}
		\varphi^{-k-p} p_{n-k} (A [ \varphi]) = \gamma,
	\end{align*}
	and when $F = (p_n/p_{k})^{1/(n-k)}$, it is equivalent to
	\begin{align*}
	\gamma \varphi^{n+p} p_{n-k} ( \kappa -1) =1.
	\end{align*}
\end{remark}

\begin{proof}[Proof of Theorem \ref{thm-horo-p-CM-problem}] 
To establish the existence of the solution, we use the degree theory for second order fully nonlinear elliptic operators developed by Li \cite{Li89}. If $-1\leq q<0$ (that is, $-n\leq p<-k$), we take
\eq{
f_t =\( (1-t)(f_{\max})^{-\frac{1}{n-k} } + t f^{-\frac{1}{n-k} }\)^{-(n-k)}, \quad t \in [0,1];
}
If $q\geq 0$ (that is, $p\geq -k$), we take
\eq{
f_t =\( (1-t)(f_{\max})^{-\frac{1}{n+p}}+ t f^{-\frac{1}{n+p}}\)^{-(n+p)}, \quad t \in [0,1].
}

Consider the Banach space
\eq{
	\mathcal{B}^{2, \alpha}(\mathbb{S}^n )  : = \{ w \in C^{2, \alpha}(\mathbb{S}^n ) ~|~ \text{$w$ is even} \}
}
and
\eq{
	\mathcal{B}_0^{4, \alpha}(\mathbb{S}^n ) := \{w \in C^{4, \alpha}(\mathbb{S}^n ) ~|~ \text{$w$ is  even}\}.
}
Define $\mathcal{L}_t:\mathcal{B}_0^{4,\alpha}(\mathbb{S}^n )\ra \mathcal{B}^{2,\alpha}(\mathbb{S}^n )$ by
\begin{align*}
\mathcal{L}_t (\g) =p_{n-k}^\frac{1}{n-k}(A[\g])-\g^{q} f_t^\frac{1}{n-k}.
\end{align*}	

Let $R>0$ be a constant. For $-1\leq q\leq 1$, let
\eq{
\mathcal{O}_R:= \left\{ w \in \mathcal{B}_0^{4, \alpha}(\bbS^n) ~|~  \| w  \|_{C^{4, \alpha}}<R, \ w >1 +\frac{1}{R}, \ A[w] > 0,
\ p_{n-k} (A [w]) > \frac{1}{R} \right\};
}
and for $q>1$, let
\eq{
\mathcal{O}_R:= \left\{ w \in \mathcal{B}_0^{4, \alpha}(\bbS^n) ~|~  \| w  \|_{C^{4, \alpha}}<R, \ w >\sqrt\frac{q+1}{q-1}, \ A[w] > 0,
\ p_{n-k} (A [w]) > \frac{1}{R} \right\}.
}
Then $\mathcal{O}_R$ is an open bounded subset of $\mathcal{B}_{0}^{4,\alpha}(\mathbb S^n)$ for all $q\geq -1$. Moreover, $\cL_t$ is uniformly elliptic in $\cO_R$ for any $t\in [0,1]$.

{\bf Claim}: For each $t\in [0,1]$, equation $\cL_t(\varphi)=0$ admits no solution on $\partial \cO_R$ if $R>0$ is sufficiently large. 

First of all, we show that if $f$ satisfies Assumption \ref{Assump-f} and Assumption \ref{Assump-barrier-f}, then $f_t$ satisfies these assumptions for all $t \in [0,1]$. For the former case $-1\leq q<0$ (i.e. $-n\leq p<-k$), it follows from $f\leq f_{\max}$ that  
\eq{
(f_t)^{-\frac{1}{n-k}}= (1-t)(f_{\max})^{-\frac{1}{n-k}}+tf^{-\frac{1}{n-k}} \geq (f_{\max})^{-\frac{1}{n-k}}.
}
So we have $f_t\leq (f_t)_{\max}\leq f_{\max}$ and hence $f_t$ satisfies Assumption \ref{Assump-barrier-f}. To show that $f_t$ satisfies Assumption  \ref{Assump-f}, we have
\eq{
(f_t)^{-\frac{1}{n-k}}= (1-t)(f_{\max})^{-\frac{1}{n-k}}+tf^{-\frac{1}{n-k}} \geq tf^{-\frac{1}{n-k}},
}
\eq{
D^2 f_t^{-\frac{1}{n-k}}= t D^2 f^{-\frac{1}{n-k}}, \quad Df_t^{-\frac{1}{n-k}}=t Df^{-\frac{1}{n-k}},
}
and
\eq{
\frac{|D f_t^{-\frac{1}{n-k}}|^2}{f_t^{-\frac{1}{n-k}}}= \frac{t^2 |Df^{-\frac{1}{n-k}}|^2}{(1-t)(f_{\max})^{-\frac{1}{n-k}}+tf^{-\frac{1}{n-k}}}\leq t \frac{|Df^{-\frac{1}{n-k}}|^2}{f^{-\frac{1}{n-k}}}.
}
Since $(f_t)_{\max}\leq f_{\max}$, we also have
\eq{
\frac{f_t^{-\frac{1}{n-k}}}{2+8((f_t)_{\max})^\frac{1}{n-k}}\geq t \frac{f^{-\frac{1}{n-k}}}{2+8 (f_{\max})^\frac{1}{n-k}}.
}
Thus, it is clear that $f_t$ satisfies (1)--(3) in Assumption \ref{Assump-f} for $-1\leq q<0$. The latter case $q\geq 0$ (i.e. $p\geq -k$) can be proved similarly. By using the full rank theorem (Theorem \ref{thm-full-rank}), we know that the solution $\varphi=\varphi_t$ to equation $\cL_t(\varphi)=0$ satisfies $A[\varphi_t]>0$. It is easy to verify that $f_t \geq f_{\min}$ for all $t\in [0,1]$. By the a priori estimates \eqref{eq-C0} and \eqref{eq-higher order derivative}, $\cL_t(\varphi)=0$ admits no solution on $\partial \cO_R$ if $R$ is sufficiently large for the case $-1\leq q\leq 1$. 

Next, we consider the remaining case $q>1$. As $f_{\min}\leq f_{t}\leq f_{\max}$, by a priori estimate \eqref{eq-higher order derivative} and equation $\cL_t(\varphi)=0$, there exists a constant $R_0>0$ such that 
\eq{
\|\varphi\|_{C^{4,\alpha}}<R_0, \quad p_{n-k}(A[\varphi])>\frac{1}{R_0}.
}
Suppose the claim is false. Then there always exists a sufficiently large constant $R\geq R_0$ and a solution to equation $\cL_t(\varphi)=0$ such that $\varphi \in \partial \cO_R$. Then we have
\eq{
\|\varphi\|_{C^{4,\alpha}}<R, \quad p_{n-k}(A[\varphi])>\frac{1}{R}.
}
So $\varphi\in \partial\cO_R$ implies that  $\varphi\geq \sqrt{\frac{q+1}{q-1}}$ and $\varphi(z_0)=\sqrt{\frac{q+1}{q-1}}$ is attained at some point $z_0\in \bbS^n$. At this point $z_0$, we have
\eq{
A[\varphi(z_0)] \geq \frac{1}{2}\( \sqrt{\frac{q+1}{q-1}}-\sqrt{\frac{q-1}{q+1}}\) \sigma,
}
and by Assumption \ref{Assump-barrier-f}, we deduce that
\eq{
\(\frac{(q+1)^{\frac{q+1}{2}}}{(q-1)^{\frac{q-1}{2}}}\)^{-1}&>(f_{\max})^{\frac{1}{n-k}}\geq ((f_t)_{\max})^{\frac{1}{n-k}}\\
&\geq f_t(z_0)^\frac{1}{n-k}=\varphi(z_0)^{-q} p_{n-k}^\frac{1}{n-k}(A[\varphi(z_0)])\\
&\geq \(\frac{(q+1)^{\frac{q+1}{2}}}{(q-1)^{\frac{q-1}{2}}}\)^{-1}.
}
This is a contradiction, and the claim follows. 

Recall that $\mathcal{L}_t$ is uniformly elliptic in $\mathcal{O}_R$, and hence the degree of $\mathcal{L}_t$ on $\mathcal{O}_R$ at $0$ is well-defined for all $t\in [0,1]$. By \cite[Prop. 2.2]{Li89}, the degree $\deg(\cL_t,\cO_R,0)$ is a homotopic invariant and thus 
\eq{\label{equality at t=1}
\deg(\cL_1,\cO_R,0)=\deg(\cL_0,\cO_R,0).
}
Hence, it suffices to compute the degree at $t=0$. By Lemma \ref{s6:lem-uniqueness}, the even, h-convex solutions to equation \eqref{s6:general-eq} are constant for any $q\geq -1$. Let $\varphi=c_0$ be a constant solution to equation $\mathcal{L}_0\(c_0 \)=0$, i.e.,
\eq{\label{s6:constant-solution}
\frac{1}{2} \( c_0-c_0^{-1}\)= c_0^q f_0^{\frac{1}{n-k}}=c_0^q (f_{\max})^{\frac{1}{n-k}}.
}
Note that Assumption \ref{Assump-barrier-f} on $f_{\max}$ guarantees the existence and uniqueness of the constant solution $\varphi=c_0$ in $\cO_R$. The linearized operator of $\cL_0$ at $\varphi=c_0$ is
\eq{
L_{c_0} \eta &=\frac{1}{n}\Delta_{\bbS^n} \eta+\( \frac{1}{2}+\frac{1}{2c_0^2}-q (f_{\max})^{\frac{1}{n-k}} c_0^{q-1} \)\eta\\
&=\frac{1}{n}\Delta_{\bbS^n} \eta+\( \frac{1-q}{2}+\frac{1+q}{2c_0^2} \)\eta.
}
Note that for each $q\geq -1$, we have 
\eq{
\frac{1-q}{2}+\frac{1+q}{2c_0^2}\leq 1.
}
Since the eigenvalues of the Beltrami-Laplace operator $\Delta_{\mathbb S^n}$ on $\mathbb S^n \subset \bbR^{n+1}$ are strictly less than $-n$ except for the first two eigenvalues $0$ and $-n$, and the coordinate functions of $\bbR^{n+1}$ span the eigenspace of eigenvalue $-n$ are odd, equation $L_{c_0} \eta=0$ only admits the unique even solution $\eta=0$. Thus the operator $L_{c_0}$ is invertible at $\varphi=c_0$. By \cite[Prop. 2.3]{Li89},
we have
\eq{\label{equiv-0-L_{c_0}}
\deg(\cL_0,\cO_R,0)=\deg(L_{c_0},\cO_R,0).
}
Moreover, there is only one positive eigenvalue $\frac{1-q}{2}+\frac{1+q}{2c_0^2}$ of $L_{c_0}$ with multiplicity $1$. 
By \cite[Prop. 2.4]{Li89}, we know that 
\eq{ \label{computation-of-degree-L_{c_0}}
\deg(L_{c_0},\cO_R,0)=-1.
}
Finally, it follows from \eqref{equality at t=1}, \eqref{equiv-0-L_{c_0}} and \eqref{computation-of-degree-L_{c_0}} that
\eq{
\deg(\cL_1,\cO_R,0)=-1,
}
which implies that there exists a $C^{4, \alpha }$-smooth, strictly h-convex, even solution $\varphi\in \cO_R$ to equation 
\eq{
p_{n-k}^\frac{1}{n-k}(A[\varphi])=\varphi^q f^{\frac{1}{n-k}}.
}
The regularity of $\varphi$ follows from Theorem \ref{thm-regularity-estimate}. This completes the proof of Theorem \ref{thm-horo-p-CM-problem}.
\end{proof}
\begin{remark}\label{remark-invertible}
Note that the first eigenvalue of $\Delta_{\bbS^n}$ for even functions are $-2(n+1)$, so for any $-4n-3 \leq q< -1$, by $c_0>1$ we have 
\eq{
1<\frac{1-q}{2}+\frac{1+q}{2c_0^2}<2(n+1).
}
Therefore, the operator $L_{c_0}$ is invertible at $\varphi=c_0$ for all $q\geq -4n-3$. In particular, for $n=1$, $L_{c_0}$ is invertible at $\varphi=c_0$ for all $q\geq -7$.
\end{remark}

\begin{proof}[Proof of Theorems \ref{thm-Lp-Weingarten-eq-hyperbolic}, \ref{thm-horo-p-Minkowski} and \ref{thm-horo-p-Minkowski-hyperbolic-plane}]
Let $n\geq 1$ and $0 \leq k\leq n-1$. We take 
\eq{
\cL_t(\varphi)=\(\frac{p_n(A[\varphi])}{p_k(A[\varphi])}\)^\frac{1}{n-k}-\varphi^q f_t^\frac{1}{n-k}.
}
If either $-1\leq q\leq 1$ for $n\geq 2$, or $-7\leq q\leq 1$ for $n=1$ (in this case, $k=0$), we define
\eq{
\cO_R:=\left\{ w \in \mathcal{B}_0^{4, \alpha}(\bbS^n) ~|~  \| w  \|_{C^{4, \alpha}}<R, \ w >1 +\frac{1}{R}, 
\ \frac{p_n(A [w])}{p_k(A [w])}  > \frac{1}{R}\right\};
}
If $q>1$ for $n\geq 1$, we define
\eq{
\cO_R:=\left\{ w \in \mathcal{B}_0^{4, \alpha}(\bbS^n) ~|~  \| w  \|_{C^{4, \alpha}}<R, \ w >\sqrt\frac{q+1}{q-1}, 
\  \frac{p_n(A [w])}{p_k(A [w])}  > \frac{1}{R} \right\}.
}
Then we have $\cL_t^{-1}(0)\cap \partial \cO_R=\emptyset$ if $R>0$ is sufficiently large. Here, in order to show that the solution to equation $\cL_t(\varphi)=0$ satisfies $A[\varphi]>0$, we use Lemma \ref{lem-elliptic-(p_n/p_{n-k})^{1/k}} instead of the full rank theorem (Theorem \ref{thm-full-rank}). For the case $n=1$ and $-7 \leq q \leq 1$, the uniqueness of the constant solution $\varphi=c_0$ follows from Lemma \ref{s6:lem-uniqueness}, and the operator $L_{c_0}$ is invertible at $\varphi = c_0$ when $q\geq -7$ by Remark \ref{remark-invertible}. The remaining proof of Theorems \ref{thm-Lp-Weingarten-eq-hyperbolic}, \ref{thm-horo-p-Minkowski} and \ref{thm-horo-p-Minkowski-hyperbolic-plane} follows from the similar argument as that of Theorem \ref{thm-horo-p-CM-problem}.
\end{proof}

\providecommand{\href}[2]{#2}

\end{document}